 \newcommand\pdfmath[1]{\texorpdfstring{$#1$}{#1}}
\definecolor{shadecolor}{RGB}{248,248,248}
\theoremstyle{plain}
\newtheorem{theorem}{Theorem}[section]
\theoremstyle{definition}
\newtheorem{definition}[theorem]{Definition}
\newtheorem{remark}[theorem]{Remark}
\newtheorem{corollary}[theorem]{Corollary}
\newtheorem{proposition}[theorem]{Proposition}
\newtheorem{lemma}[theorem]{Lemma}
\newtheorem{assumption}[theorem]{Assumption}
\newcommand{\N}{\mathbb{N}}
\newcommand{\Z}{\mathbb{Z}}
\newcommand{\R}{\mathbb{R}}
\newcommand{\C}{\mathbb{C}}
\newcommand{\e}{\mathrm{e}}
\begin{document}
\title{Generalized Wright Analysis in Infinite Dimensions}

\author[L.~Beghin]{Luisa Beghin}

\address{%
Sapienza University of Rome\\
 P.le Aldo Moro, 5, Rome, Italy,}

\email{luisa.beghin@uniroma1.it}

\author[L.~Cristofaro]{Lorenzo Cristofaro}

\address{%
 University of Luxembourg\\
 Esch-sur-Alzette, Luxembourg,
}

\email{lorenzo.cristofaro@uni.lu}

\author[J.~L.~da Silva]{Jos{é} L.~da Silva}

\address{%
 CIMA, Faculty of Exact Sciences and Engineering,\\
  University of Madeira, Campus da Penteada,\\
 9020-105 Funchal, Portugal.}

\email{joses@staff.uma.pt}

\subjclass{60H40, 46F25, 46F12, 33E12, 60G20}

\keywords{Non-Gaussian analysis, Appell systems, Integral transforms of distribution, Donsker's delta, generalized Wright functions,}

\date{January 4, 2025}

\begin{abstract}
This paper investigates a broad class of non-Gaussian measures, $ \mu_\Psi$, associated with a family of generalized Wright functions, $_m\Psi_q$. 
First, we study these measures in Euclidean spaces $\mathbb{R}^d$, then define them in an abstract nuclear triple $\mathcal{N}\subset\mathcal{H}\subset\mathcal{N}'$. We study analyticity, invariance properties, and ergodicity under a particular group of automorphisms. Then we show the existence of an Appell system which allows the extension of the non-Gaussian Hilbert space $L^2(\mu_\Psi)$ to the nuclear triple consisting of test functions' and distributions' spaces,  $(\mathcal{N})^{1}\subset L^2(\mu_\Psi)\subset(\mathcal{N})_{\mu_\Psi}^{-1}$.  Furthermore, thanks to the definition of two transformations, $S_{\mu_{\Psi}}$ and $T_{\mu_{\Psi}}$, we study Donsker's delta as an element within $(\mathcal{N})_{\mu_\Psi}^{-1}$ applying the integral equations fulfilled by $_m\Psi_q$.
\end{abstract}	


	\maketitle

 \section{Introduction}

In the twentieth century, stochastic analysis and measure theory in function spaces were extensively studied to develop models with random phenomena. The classic example is Gaussian analysis, which played a central role in its advancement because of the power of its tools. Two important reference points are white noise theory and the Malliavin calculus; see \cite{HKPS93} and \cite{Nua06}.
These two theories provided analytical tools such as derivatives and integral transforms in function spaces. These tools help us to create infinite dimensional spaces that allow the study of functionals and distributions related to stochastic processes. 
In white noise analysis, many important functionals, such as Feynman-Kac formulae, and distributions like local times, Donsker's delta, and noise of standard or fractional Brownian motion, were properly defined and widely studied.

More specifically, white noise is the measure space $(\mathcal{S}(\R)', \mathcal{C}_\sigma(\mathcal{S}(\R)'), \mu)$, where $\mathcal{S}(\R)'$ is the dual space of the Schwartz space of rapidly decreasing smooth functions,  $\mathcal{C}_\sigma(\mathcal{S}(\R)')$ is the $\sigma$-algebra generated by its weak topology and $\mu$ is the Gaussian measure. Other probability measures can be considered on $\mathcal{S}(\R)'$ or on an abstract nuclear triple $\mathcal{N}\subset \mathcal{H}\subset\mathcal{N}'$, to cover measures for point processes or more general non-Gaussian measures. We refer to the following books for Gaussian analysis \cite{HKPS93,BK95,Oba,RS72} and \cite{Ito88,AKR97a,KDSSU98} for Poisson and related measures. Given this, a general infinite dimensional non-Gaussian analysis was developed to study random variables in non-Gaussian Hilbert spaces, their functionals, and spaces of distributions; see \cite{Dal91, KSWY98}. More recently, an approach based on topological algebras and free probability to study the orthogonal decomposition of non-Gaussian measures and generalized stochastic processes was studied in \cite{ACK22} and \cite{AJS14}, respectively.

In the approach to infinite dimensional non-Gaussian analysis given by \cite{ADKS96,KSWY98}, the Wick-ordered polynomials (in Gaussian analysis) are replaced by a biorthogonal system of polynomials called the Appell systems. Therefore, the constructions and concepts from Gaussian analysis become suitable for a
wide class of measures, even if no Wick-ordered polynomial exists. To prove the existence of Appell systems, the measure must satisfy two properties: its Laplace transform must be a holomorphic functional and satisfy a non-degeneracy or positivity condition, that is, it is strictly positive on nonempty open sets. Once the Appell systems are established, the powerful techniques of Gaussian analysis can be transferred to non-Gaussian analysis to define spaces of test functions, distributions, and integral transforms. The integral transforms called $S$ and $T$-transforms are used to characterize the distributions in terms of spaces of holomorphic functions on locally convex spaces.

In this regard, \cite{Sch88} and \cite{Sch90} introduced a non-Gaussian family of probability measures called grey noise measures, whose characteristic functionals are given by Mittag-Leffler functions. Grothaus et al.~\cite{JahnI} extended the family of grey noise measures to
Mittag-Leffler measures on an abstract nuclear triple $\mathcal{N}\subset\mathcal{H}\subset\mathcal{N}'$. This family of measures $\mu_\beta$, $0<\beta\le1$, on $(\mathcal{N}', \mathcal{C}_{\sigma}(\mathcal{N}'))$ satisfies 
\[ \int_{\mathcal{N}'} \e^{\mathrm{i} \langle \omega, \xi \rangle} \mathrm{d}\mu_\beta(\omega)=E_{\beta}\left( -\frac{1}{2}\langle \xi, \xi \rangle   \right), \qquad \xi \in \mathcal{N},  
\]
where the Mittag-Leffler function $E_{\beta}(x)=\sum_{j = 0}^\infty \frac{1}{\Gamma(\beta j +1)}x^j$, $x \in \R$. Recently, gamma-gray noise, a non-Gaussian setting related to the incomplete gamma function $\Gamma(\rho, \theta +x)$, $x\geq 0$, was also defined; see \cite{BCG23} and \cite{BC23}. The Mittag-Leffler and gamma-grey measures are both generalizations of the Gaussian measure.  
Furthermore, these measures satisfy the necessary conditions for the existence of Appell systems. Therefore, this allows us to introduce the test function and distribution spaces following the methodology described in \cite{KSWY98}. 
The latter allows us to study Donsker's delta as an element in a proper distribution space; see \cite{BC23}.

Another goal given by non-Gaussian analysis is its application to differential equations and stochastic processes. In \cite{JahnII}, specific non-Gaussian distribution spaces are shown to be required to properly define the solution of the fractional heat equation and represent it using a Feynman-Kac formula. Furthermore, the interplay between anomalous diffusion and fractional diffusion equations carries numerous scientific applications, like relaxation type differential equations (with a proper notion of stochastic integral in this framework), continuous time random walks, or viscoelasticity; see \cite{Koc11,MS19,Mai22}. 

The key role of the Mittag-Leffler function in the above infinite dimensional setting is replaced here, for the first time, by the use of generalized Wright functions $_m\Psi_p(\cdot)$, gWf hereinafter, the Mittag-Leffler function being a special case.
Indeed, in this paper, we focus on a new class of non-Gaussian measures $\mu_\Psi$ characterized by gWfs; see Definition~\ref{def:genWrigfuncti} below. In Euclidean space $\mathbb{R}^d$, these measures are absolutely continuous with respect to the Lebesgue measure, and their Radon-Nikodym derivative is given in terms of Fox-H functions. 
Due to their great flexibility, the class of Fox-H functions (first defined in \cite{Fox61}) may cover many known elementary and special functions and is used to solve problems in different branches of science. They have been used to express solutions to differential equations through series expansions and Mellin-Barnes integrals. Bessel functions, hypergeometric, or confluent hypergeometric functions are some of the most famous examples.  Indeed, due to their analytical properties or expression through continuous fractions, the latter have gained extensive theoretical utility and have inspired advances in diverse mathematical domains; in particular to integral equations, Feynman-Kac formula (see \cite{Gri13}, \cite{KR09},  \cite{CPVW08}, \cite{KK25}) which in this framework is postponed for further work.

Other applications include statistics, astrophysics, diffusion problems, probability, and operator theory; see \cite{MS78, Spri79, Kir94, Mai22}.

During the past thirty years, numerous authors have established and examined the correlation between fractional calculus, special functions, and probability. In particular, the interplay between some transcendental functions of Wright type and time-fractional diffusion processes was thoroughly investigated; see, for example, \cite{MPS05}, \cite{MMP10}, \cite{GLM99}, and references therein.
Mittag-Leffler functions, gWfs, Le Roy functions, and many others are strictly connected to fractional operators, as shown, for example, in \cite{KST02,GKMR20,GG18,DLS21}. 

Now we describe our results in more detail. First, we would like to understand the class of probability measures in which we are interested in the Euclidean space $\mathbb{R}^d$, $d\in\mathbb{N}$. This class is denoted by $\mathcal{M}_\Psi(\mathbb{R}^d)$. To be precise, we show that each measure $\mu_\Psi\in\mathcal{M}_\Psi(\mathbb{R}^d)$ is represented as a mixture of Gaussian measures and identify the class of mixing measures on $(0,\infty)$. Each element $\mu_\Psi\in\mathcal{M}_\Psi(\mathbb{R}^d)$ has two fundamental properties:
\begin{itemize}
    \item Its mixed moments of all orders are finite;
    \item It is absolutely continuous with respect to the Lebesgue measure.
\end{itemize}
In addition, we introduce a sequence of monic polynomials $F_n^H(\cdot)$ for $d=1$. These are given in terms of Hermite polynomials with parameter $\sqrt{\tau}$, where $\tau>0$; see Equation~\eqref{eq:FoxHermitePolynomials}.
An abstract nuclear triple is used to generalize the results to infinite dimensions for later applications. Hence, we introduce the class of probability measures $\mathcal{M}_\Psi(\mathcal{N}')$ by providing its characteristic functionals and using Minlos' theorem. The finite-dimensional projections of these measures coincide with the class $\mathcal{M}_\Psi(\mathbb{R}^d)$. We show that each element $\mu_\Psi\in\mathcal{M}_\Psi(\mathcal{N}')$ has mixed moments of all orders finite; see Theorem~\ref{thm:MomentmuHInfiniteDimension}. 
To introduce the test and generalized function spaces, we need to ensure that the class $\mathcal{M}_\Psi(\mathcal{N}')$ satisfies the properties \eqref{PropertyA1} and \eqref{PropertyA2}, that is, each measure is analytic and positive on nonempty sets. These results are proved in Theorem~\ref{thm:holomorphic-LT} and Theorem~\ref{thm:mixture}, respectively. The properties \eqref{PropertyA1} and \eqref{PropertyA2} are necessary and sufficient to introduce test functions and distribution spaces as well their characterization via integral transforms; see Subsection~\ref{subsec:charact-thms}.     

Thus, the content of Sections~\ref{sec:FinidimeFoxHmeasFHPolyno}--\ref{sec:AnalypropFHmeasu} has been described. It remains to add that in Section~\ref{sec:DonskDelta} we give an application in this non-Gaussian setting, namely we define the Donsker delta as a well-defined object in a certain distribution space.

\section{Finite Dimensional Generalized Wright Measures}\label{sec:FinidimeFoxHmeasFHPolyno}

In this section, we define the class $\mathcal{M}_\Psi(\R^d)$ of generalized Wright measures (for short gWm) on $\R^d$, $d \in \N$. More precisely, this class of probability measures on $\R^d$ has the Fourier transform given by a gWf $_m\!\Psi_p(\cdot)$; see Definition~\ref{def:genWrigfuncti} below for details. We show that every measure $\mu_\Psi^d$ of this class is a mixture of Gaussian measures with a certain probability measure on $(0,\infty)$, expressed by Fox-$H$ functions; cf.~Corollary~\ref{cor:mixture-fd}. We also compute their mixed moments and densities, denoted by $\varrho_H^d(\cdot)$. We conclude this section by defining the Fox-Hermite polynomials $F_n^H(x)$, $x \in \R$, and some of their properties.

\subsection{Fox-\pdfmath{H} Densities and Generalized Wright Functions }

To introduce the class of gWm (see Definition~\ref{def:finite-dim-gWm} below), we recall the assumptions on the parameters $a_i,b_j,\alpha_i,\beta_j, i=1,\dots,p, j=1,\dots,m$ involved in $_m \Psi_q(\cdot)$. The constraints on the parameters in Assumption~\ref{ass:AllHdensity}, together with $a^* \in (0,1)$, ensure the analyticity of the gWfs on the whole complex plane. Indeed, in Assumption~\ref{ass:AllHdensity} we state the conditions under which the Fox-$H$ densities $\varrho(\cdot)$ have all moments. These densities, called F$H$dam, serve the purpose of defining gWfs, as detailed in Lemma~\ref{lem:HFiniteDensities}. 
Fox-$H$ functions are defined through a Mellin-Barnes integral as follows.

\begin{definition}[Fox-$H$ functions; see \cite{SaiKil}]
Let $m,n,p,q \in \mathbb{N}_0$ be given with $0 \leq m \leq q$, $0 \leq n \leq p$. Let $a_i, b_j \in \mathbb{C}$ and $\alpha_i,\beta_j>0$ for $i=1,..,p, j=1,..,q$. The Fox-$H$ function is defined through the Mellin-Barnes integral as:\\
\begin{equation} \label{def:KernelFunctionFoxH}  
H_{p,q}^{m,n}\left[z \Bigg| \genfrac{}{}{0pt}{}{(a_i,\alpha_i)_{1,p}}{(b_j,\beta_j)_{1,q}}\right]:=\frac{1}{2\pi \mathrm{i}}\int_{\mathcal{L}} \mathcal{H}^{m,n}_{p,q}(s)z^{-s}\mathrm{d}s, \quad z \in \mathbb{C},  
\end{equation}
where 
\[
\mathcal{H}^{m,n}_{p,q}(s)= \frac{\prod_{j=1}^{m} \Gamma({b_j+s\beta_j}) \prod_{i=1}^{n}\Gamma({1-a_i-s\alpha_i})}{ \prod_{i=n+1}^{p}\Gamma({a_i+s\alpha_i})\prod_{j=m+1}^{q} \Gamma({1-b_j
				-s\beta_j})}
\]
and $\mathcal{L}$ is a contour that separates the poles of $\mathcal{H}^{m,n}_{p,q}(s)$ given by $\prod_{j=1}^{m} \Gamma({b_j+s\beta_j})$ from those of $ \prod_{i=1}^{n}\Gamma({1-a_i-s\alpha_i})$.
\end{definition}
\begin{remark}
    The contour $\mathcal{L}$ can take three distinct shapes, which we refer to as $\mathcal{L}_{-\infty}$, $\mathcal{L}_{+\infty}$, and $\mathcal{L}_{\mathrm{i}\gamma\infty}$, defined as follows:
\begin{itemize}
	\item $\mathcal{L}=\mathcal{L}_{-\infty}$ is a left loop situated in a horizontal strip starting at the point $-\infty + \mathrm{i} \phi_1$ and terminating at the point $-\infty + \mathrm{i} \phi_2$ with $-\infty<\phi_1<\phi_2<+\infty$;
	\item $\mathcal{L}=\mathcal{L}_{+\infty}$ is a right loop situated in a horizontal strip starting at the point $+\infty + \mathrm{i} \phi_1$ and terminating at the point $+\infty + \mathrm{i}\phi_2$ with $-\infty<\phi_1<\phi_2<+\infty$;
	\item $\mathcal{L}=\mathcal{L}_{\mathrm{i}\gamma\infty}$ is a contour starting at the point $\gamma -\mathrm{i}\infty$ and terminating at the point $\gamma +\mathrm{i}\infty$, where $\gamma \in \mathbb{R}=(-\infty,+\infty)$.
\end{itemize}
In what follows, we assume that the contour of the Fox-$H$ functions used is of the kind $\mathcal{L}_{\mathrm{i}\gamma\infty}, \gamma \in (0,1)$.
\end{remark}
For further details on the existence of these functions; see \cite{SaiKil}.
As announced, gWfs could be represented by Fox-$H$ functions, depending on the values of the parameters $a_i,\alpha_i,b_j,\beta_j$, $i=1,\dots,m$, $j=1,\dots,p$. 
\begin{definition}[Generalized Wright function]\label{def:genWrigfuncti}
For $m,p \in \N_0$, $A_j,B_i\in \mathbb{C}$ and $\alpha_j,\beta_i \in \R \backslash \{0\}$, with $j=1,\dots,m$ and $i=1,\dots,p$, the gWfs are formally defined by the power series:
\[ 
_m\Psi_p\left[ \genfrac{}{}{0pt}{}{(B_i, \beta_i )_{1,m}}{(A_j, \alpha_j )_{1,p} }\Bigg| z \right]=\sum_{k\geq0} \frac{\prod_{i=1}^m \Gamma(B_i+\beta_i k)}{\prod_{j=1}^p \Gamma(A_j+\alpha_j k)} \frac{z^k}{k!}.
\]
They can be represented by Fox-$H$ functions if $\alpha_i,\beta_j$ are positive: 
\begin{equation}\label{eq:defGenWrightfuncasFoxHfunc}  
_m\Psi_p\left[ \genfrac{}{}{0pt}{}{(B_i, \beta_i )_{1,m}}{(A_j, \alpha_j )_{1,p} }\Bigg| -z \right]=H^{1,m}_{m,p+1}\left[z \, \Bigg| \genfrac{}{}{0pt}{}{(1-B_i, \beta_i )_{1,m}}{(0,1),(1-A_j, \alpha_j )_{1,p} }\right], \quad z \in \mathbb{C} \end{equation}
 see also Equation (5.2) in \cite{KST02}.
\end{definition}
\begin{remark} Along the paper, we recover the case of white noise analysis and the case of Mittag-Leffler analysis by imposing specific values to the parameters as follows:
\begin{itemize}
    \item The Mittag-Leffler function, $E_{\rho}(\cdot)$, $\rho \in (0,1]$, can be represented by gWfs by choosing  $m=1$, $p=1$, $B_1=1$, $\beta_1=1$, $A_1=1$, $\alpha_1=\rho$; see Section~6 in \cite{KST02} and Example 5.5 in \cite{BCDS23};
    \item The white noise case is obtained by imposing $m=p=0$; see Corollary 4.6 in \cite{BCDS23}.
    \end{itemize}
    In the following sections it is understood that the white noise case is a special case of Mittag Leffler analysis, hence of generalized Wright analysis, since for $\beta=1$ we have $E_{1}(\cdot)=\exp(\cdot)$.
\end{remark}

In what follows, we give assumptions on the parameters $m$, $p$, $a_i$, $b_j$, $\alpha_i$, $\beta_j$ to identify the class of Fox-$H$
densities with finite moments of any order. This class is used to find gWfs.
\begin{assumption}\label{ass:AllHdensity}
    Let $m,p \in \mathbb{N}_0:=\{0,1,2,\dots\}$ be given. We make the following assumptions on the parameters of the Fox-$H$ function:
\begin{enumerate}
\item Let $a_i \in \mathbb{R}$ and $\alpha_i>0$, for $i=1,\dots,p$, and $a_i+\alpha_i>0$, for $i=1,\dots,p$;
\item Let $b_j \in \mathbb{R}$ and $\beta_j>0$, for $j=1,\dots,m$,  and $b_j + \beta_j>0$, for $j=1,\dots,m$;
\item Let either $a^*>0$ or $a^*=0$ and $\mu<-1$;
\item Let $H^{m,0}_{p,m} \left[ \cdot \,\big|\genfrac{}{}{0pt}{}{(a_i,\alpha_i)_{1,p}}{(b_j,\beta_j)_{1,m}} \right]$ be non-negative on $(0,\infty)$.
\end{enumerate}

   where $a^*:=\sum_{j=1}^m \beta_j - \sum_{i=1}^p \alpha_i$ and $\mu:=\sum_{j=1}^m b_j -\sum_{i=1}^p a_i - (p-m)/2$.
\end{assumption}

\begin{lemma}[Fox-$H$ densities with all moments; see Lemma 3.4 in \cite{BCDS23}]\label{lem:HFiniteDensities}
			Let the Assumption~\ref{ass:AllHdensity}. Then the corresponding Fox-$H$ density	
\begin{equation}\label{eq:densitiesFiniteMoments}  
\varrho(\tau)=\frac{1}{K}H^{m,0}_{p,m} \left[ \tau \,\Bigg|\genfrac{}{}{0pt}{}{(a_i,\alpha_i)_{1,p}}{(b_j,\beta_j)_{1,m}} \right], \quad \tau>0,\end{equation}
where 
\begin{equation}\label{eq:norm-constant}
K:=\frac{\prod_{i=1}^m \Gamma(b_i+\beta_i)}{\prod_{i=1}^p \Gamma(a_i+\alpha_i)}     \end{equation} 
has finite moments of all orders. The moments are given by 
\[
\int_{0}^\infty \tau^l  \varrho(\tau)\, \mathrm{d}\tau
=\frac{1}{K} \frac{\prod_{i=1}^m \Gamma(b_i+\beta_i(l+1))}{\prod_{i=1}^p \Gamma(a_i+\alpha_i(l+1))}, \quad l=0,1,\dots.
\]
Furthermore, its LT is given by 
\begin{equation}\label{eq:LTdensitiesFiniteMoments} 
(\mathscr{L}\varrho)(s)
=\frac{1}{K}\, _m\!\Psi_p\left[ \genfrac{}{}{0pt}{}{(b_i+\beta_i,\beta_i)_{1,m}}{(a_j+\alpha_j,\alpha_j)_{1,p}} \middle| -s\right],   \end{equation}
where $s\geq 0$. Furthermore, when $a^* \in (0,1)$, $(\mathscr{L}\varrho)(\cdot)$ can be extended to an entire function.\\  
The above density $\varrho(\cdot)$ is called Fox-$H$ density with all moments (F$H$dam, hereafter) and we denote by $\mathcal{X}$ the class of random variables that have an F$H$dam as density.
\end{lemma}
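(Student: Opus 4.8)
The plan is to route everything through the Mellin transform of the Fox-$H$ function together with Stirling's asymptotics for quotients of $\Gamma$-factors; this is essentially the argument behind Lemma~3.4 of \cite{BCDS23}. \emph{Moments and density:} for $n=0$ and $q=m$ the integrand of \eqref{def:KernelFunctionFoxH} collapses to $\mathcal H^{m,0}_{p,m}(s)=\prod_{j=1}^m\Gamma(b_j+\beta_j s)/\prod_{i=1}^p\Gamma(a_i+\alpha_i s)$, and the Mellin-inversion identity for Fox-$H$ functions gives $\int_0^\infty\tau^{s-1}H^{m,0}_{p,m}[\tau\,|\,\cdots]\,\mathrm{d}\tau=\mathcal H^{m,0}_{p,m}(s)$ on its strip of convergence. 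First I would check that Assumption~\ref{ass:AllHdensity} forces this strip to contain $\{\Re s\ge1\}$: near $\tau=0^+$ the kernel is $O(\tau^{\min_j b_j/\beta_j})$, and $b_j+\beta_j>0$ makes that exponent exceed $-1$, so the left edge of the strip lies strictly below $1$; at $\tau=+\infty$ the hypotheses on $a^*$ and (when $a^*=0$) on $\mu$, together with $a_i+\alpha_i>0$, force decay faster than any polynomial, so the integral converges for every $s\ge1$. Specialising $s=l+1$, $l\in\N_0$, gives the finiteness and the explicit form of the $l$-th moment; $l=0$ gives $\int_0^\infty H^{m,0}_{p,m}[\tau\,|\,\cdots]\,\mathrm{d}\tau=K$. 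Since $b_j+\beta_j>0$ and $a_i+\alpha_i>0$ make each factor of \eqref{eq:norm-constant} finite and positive, $K>0$, and with hypothesis (4) we conclude $\varrho$ is a probability density on $(0,\infty)$.

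\emph{Laplace transform and entire extension.} Insert $\e^{-s\tau}=\sum_{k\ge0}(-s\tau)^k/k!$ into $(\mathscr L\varrho)(s)$ and integrate term by term. Using the Taylor remainder bound $|\e^{-s\tau}-\sum_{k=0}^{N}(-s\tau)^k/k!|\le(s\tau)^{N+1}/(N+1)!$ and the moment formula just obtained, the error after $N$ terms is bounded by $s^{N+1}K^{-1}\prod_i\Gamma(b_i+\beta_i(N+2))/\big((N+1)!\prod_j\Gamma(a_j+\alpha_j(N+2))\big)$, and Stirling turns ``this tends to $0$ for every fixed $s\ge0$'' into the single condition $a^*<1$. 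The surviving series is $K^{-1}\sum_{k\ge0}\big(\prod_i\Gamma(b_i+\beta_i(k+1))/\prod_j\Gamma(a_j+\alpha_j(k+1))\big)(-s)^k/k!$, which by Definition~\ref{def:genWrigfuncti} is exactly $K^{-1}\,{}_m\Psi_p[(b_i+\beta_i,\beta_i)_{1,m};(a_j+\alpha_j,\alpha_j)_{1,p}\,|\,-s]$. The same Stirling estimate gives $\log(c_k/k!)=(a^*-1)k\log k+O(k)$ for the coefficients $c_k$ of that series, so when $a^*\in(0,1)$ they decay faster than any geometric sequence; hence the power series has infinite radius of convergence, defines an entire function of $s\in\C$, and agrees with $(\mathscr L\varrho)(s)$ on $[0,\infty)$ — this is the claimed continuation.

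\emph{Main obstacle.} The delicate point is the rigorous term-by-term integration in the Laplace step, uniformly over the admissible parameters, and the correct reading of the identity when $a^*\ge1$: there $\varrho$ decays only sub-exponentially, so the stated formula is only an asymptotic expansion unless one has sharper control (and the general claim for $s\ge0$ should be understood accordingly), whereas for $a^*<1$ it is a bona fide convergent identity. A secondary point is the explicit verification that the Mellin strip reaches $\{\Re s\ge1\}$ under Assumption~\ref{ass:AllHdensity}, which relies on the known $0$- and $\infty$-asymptotics of $H^{m,0}_{p,m}$; the remainder of the argument is bookkeeping with Gamma functions.
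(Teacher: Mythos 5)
Your route — reading the moments off the Mellin transform $\mathcal H^{m,0}_{p,m}(s)$ at $s=l+1$, then getting the Laplace transform by termwise integration and controlling everything with Stirling — is the natural one, and it is essentially the argument of Lemma~3.4 in \cite{BCDS23}, which this paper simply cites without reproducing a proof. The moments part of your proposal is sound: with $n=0$, $q=m$ there are no numerator poles to the right, so for either admissible case ($a^*>0$, or $a^*=0$ with $\mu<-1$) the Mellin--Barnes contour can be shifted arbitrarily far to the right, giving super-polynomial decay of $H^{m,0}_{p,m}$ at infinity, while $b_j+\beta_j>0$ keeps the behaviour at $0^+$ integrable; together with hypothesis (4) and the value $K$ of the $l=0$ moment this gives a probability density with all moments as stated. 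The entirety claim for $a^*\in(0,1)$ also comes out correctly from your coefficient estimate.

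The genuine gap is in the Laplace-transform identity \eqref{eq:LTdensitiesFiniteMoments}, which the lemma asserts for every parameter set satisfying Assumption~\ref{ass:AllHdensity}, hence in particular for $a^*\geq 1$. Your Taylor-remainder argument proves the identity only when $a^*<1$, and your fallback for $a^*\geq1$ — declaring the formula ``only an asymptotic expansion'' — silently weakens the statement rather than proving it. The intended reading for $a^*\geq 1$ is that ${}_m\Psi_p$ is the Fox-$H$ function on the right-hand side of \eqref{eq:defGenWrightfuncasFoxHfunc} (the power series is only a formal definition), and with that reading the identity is exact, not asymptotic: one either invokes the known Laplace-transform formula for $H^{m,0}_{p,m}$ (Kilbas--Saigo, Section~2), or writes $H^{m,0}_{p,m}(\tau)$ as its Mellin--Barnes integral, interchanges the $\tau$-integral with the contour integral (justified by the same vertical-line estimates you already use, since the extra factor $\Gamma(s)$ produced by $\int_0^\infty \e^{-s\tau}\tau^{t-1}\,\mathrm{d}\tau$ only improves convergence), and recognizes the resulting kernel as that of $H^{1,m}_{m,p+1}$. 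Adding this step closes the gap and makes your proof cover the full range of Assumption~\ref{ass:AllHdensity}; the rest of your argument can stand as is.
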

We denote by $\mathcal{X}$ the class of random variables with Fox-$H$ density $\varrho(\cdot)$ given in Equation~\eqref{eq:densitiesFiniteMoments}.    

\begin{remark}
    Given that our characteristic functional $_m \Psi_q(\cdot)$ is defined by the mixture of Gaussian measures with F$H$dam, the question of infinite divisibility of this family is based on the infinite divisibility of F$H$dam, as stated in Proposition~2.2 in \cite{SVH03} and, in particular, Equation~(2.7) following it. Moreover, it is widely recognized that there exists a strong correlation between complete monotonicity and infinite divisibility, as evidenced by Theorem~51.6 in \cite{Sat99}. Complete monotonicity has proven to be an essential property in establishing the infinite divisibility of densities represented by special functions, as demonstrated in \cite{BPSV24}. In Appendix~\ref{app:Completely monotone FHDAM}, additional considerations on this topic are presented, grounded in the assumptions outlined in Lemma~\ref{lem:HFiniteDensities}.
\end{remark}

\begin{remark}\begin{enumerate}
    \item  Additional examples of Fox-$H$ densities of F$H$dam can be found in \cite{BCDS23};
    \item We will employ the following alternative short notation for the gWf we use when necessary:
\[ 
_m\!\Psi_p(\cdot):= \, _m\!\Psi_p\left[ \genfrac{}{}{0pt}{}{(b_i+\beta_i,\beta_i)_{1,m}}{(a_j+\alpha_j,\alpha_j)_{1,p}} \middle| \cdot \right] .
\]
\end{enumerate}
\end{remark}

To introduce gWm on $\R^d$, we recall that the gWfs $_m\!\Psi_p(\cdot)$  defined in the above lemma are completely monotone. {\color{blue} Furthermore, by assuming $a^*\in(0,1)$ the gWfs are entire, allowing their use as characteristic functions. Indeed}, by Corollary~\ref{cor:WrightCharactFunct} below, the following map is a characteristic function of $x \in \R^d:$
\begin{equation}\label{eq:CF-gWm-finite-d} \R^d \ni x \mapsto \frac{1}{K}\, _m\!\Psi_p\left[ \genfrac{}{}{0pt}{}{(b_i+\beta_i,\beta_i)_{1,m}}{(a_j+\alpha_j,\alpha_j)_{1,p}} \middle| -\frac{(x,x ) }{2} \right], \end{equation} where $(\cdot,\cdot)$ denotes the inner-product in $\R^d$. 
  
\begin{corollary}\label{cor:WrightCharactFunct}
		Let the assumptions of Lemma~\ref{lem:HFiniteDensities} hold. Then 
\[
\R^d \ni y \mapsto \frac{1}{K}\, _m\!\Psi_p\left[ \genfrac{}{}{0pt}{}{(b_i+\beta_i,\beta_i)_{1,m}}{(a_j+\alpha_j,\alpha_j)_{1,p}} \middle| -\frac{(y,y)}{2}\right] 
\]
is a characteristic function on $\R^d$.
		\end{corollary}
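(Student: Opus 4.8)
The plan is to recognise the displayed map as the characteristic function of a \emph{scale mixture of Gaussian measures}, the mixing being performed against the F$H$dam density, with the whole identity read off from the Laplace transform formula~\eqref{eq:LTdensitiesFiniteMoments} in Lemma~\ref{lem:HFiniteDensities}.

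First I would observe that, under the assumptions of Lemma~\ref{lem:HFiniteDensities}, the Fox-$H$ function $\varrho(\cdot)$ of~\eqref{eq:densitiesFiniteMoments} is a genuine probability density on $(0,\infty)$: non-negativity is Assumption~\ref{ass:AllHdensity}(4), and putting $l=0$ in the moment formula of Lemma~\ref{lem:HFiniteDensities} together with the definition~\eqref{eq:norm-constant} of $K$ gives $\int_0^\infty \varrho(\tau)\,\mathrm{d}\tau = \frac{1}{K}\,\frac{\prod_{i=1}^m\Gamma(b_i+\beta_i)}{\prod_{i=1}^p\Gamma(a_i+\alpha_i)} = 1$. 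Next, \eqref{eq:LTdensitiesFiniteMoments} says that for all $s\ge 0$,
\[
\frac{1}{K}\,{}_m\!\Psi_p(-s) \;=\; (\mathscr{L}\varrho)(s) \;=\; \int_0^\infty \e^{-s\tau}\,\varrho(\tau)\,\mathrm{d}\tau .
\]
Since $(y,y)/2\ge 0$ for every $y\in\R^d$, I may substitute $s=(y,y)/2$ and obtain
\[
\phi(y) \;:=\; \frac{1}{K}\,{}_m\!\Psi_p\!\left(-\tfrac{(y,y)}{2}\right) \;=\; \int_0^\infty \e^{-\tau (y,y)/2}\,\varrho(\tau)\,\mathrm{d}\tau, \qquad y\in\R^d .
\]
Note that only the values of $(\mathscr{L}\varrho)$ on $[0,\infty)$ enter, so the bare hypotheses of Lemma~\ref{lem:HFiniteDensities} are enough and no entire extension (the case $a^*\in(0,1)$) is needed here.

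For each fixed $\tau>0$ the function $y\mapsto \e^{-\tau(y,y)/2}$ is the characteristic function of the centred Gaussian measure $\gamma_\tau$ on $\R^d$ with covariance operator $\tau\,\mathrm{Id}$; in particular it is continuous, takes the value $1$ at the origin, and is positive-definite. Thus $\phi$ is an average of characteristic functions against the probability measure $\varrho(\tau)\,\mathrm{d}\tau$ on $(0,\infty)$, and I would conclude by either of two equivalent routes. The direct route: by Tonelli/Fubini the probability measure $\mu_\Psi^d(\mathrm{d}x):=\int_0^\infty \gamma_\tau(\mathrm{d}x)\,\varrho(\tau)\,\mathrm{d}\tau$ — equivalently, the law of $\sqrt{T}\,Z$ with $T\sim\varrho$ and $Z\sim\gamma_1$ independent — has Fourier transform $\phi$. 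The Bochner route: $\phi(0)=\int_0^\infty\varrho(\tau)\,\mathrm{d}\tau=1$; continuity of $\phi$ follows from dominated convergence because $|\e^{-\tau(y,y)/2}|\le 1$ and $\varrho\in L^1(0,\infty)$; and for any $y_1,\dots,y_n\in\R^d$ and $c_1,\dots,c_n\in\C$,
\[
\sum_{j,k=1}^n c_j\overline{c_k}\,\phi(y_j-y_k) \;=\; \int_0^\infty \Big(\sum_{j,k=1}^n c_j\overline{c_k}\,\e^{-\tau(y_j-y_k,\,y_j-y_k)/2}\Big)\varrho(\tau)\,\mathrm{d}\tau \;\ge\; 0,
\]
since each integrand is non-negative by positive-definiteness of the Gaussian kernel and $\varrho\ge 0$.

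I do not expect a genuine obstacle: the whole content sits in the Laplace transform representation~\eqref{eq:LTdensitiesFiniteMoments}, already available from Lemma~\ref{lem:HFiniteDensities}. The only points needing a line of care are (i) checking that $\varrho$ has total mass one, which is exactly what makes $\phi(0)=1$, and (ii) noticing that the Gaussian characteristic function is evaluated only at the non-negative arguments $s=(y,y)/2$, so the representation of $(\mathscr{L}\varrho)$ on $[0,\infty)$ is all one uses. The same computation, incidentally, already exhibits the underlying measure as a Gaussian scale mixture, which feeds directly into Corollary~\ref{cor:mixture-fd}.
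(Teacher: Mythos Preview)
Your proof is correct and follows essentially the same route as the paper: both represent $\frac{1}{K}\,{}_m\!\Psi_p(-(y,y)/2)$ as $\int_0^\infty \e^{-\tau(y,y)/2}\varrho(\tau)\,\mathrm{d}\tau$ via the Laplace transform identity~\eqref{eq:LTdensitiesFiniteMoments}, and then read off positive-definiteness from that of the Gaussian kernel together with $\varrho\ge 0$. Your write-up is in fact slightly more self-contained, since you verify normalisation and continuity directly rather than citing external lemmas.
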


  \begin{proof}
  The continuity and normalization of $\frac{1}{K} \, _m \Psi_p(\cdot)$ are clear from Lemma~3.3 and Lemma~3.4 in \cite{BCDS23}.
  The property of being positive-definite is proved as follows.  For $n \in \N$, $y_1,\dots,y_n \in \R^d$ and $z_1,\dots,z_n \in \C$ we have 
  \begin{eqnarray*}  
  &&\sum_{i,j=1}^n z_i C(y_i-y_j)\bar{z}_j \\
  &=&\frac{1}{K} \sum_{i,j=1}^n  z_i\left( \, _m\!\Psi_p\left[ \genfrac{}{}{0pt}{}{(b_i+\beta_i,\beta_i)_{1,m}}{(a_j+\alpha_j,\alpha_j)_{1,p}} \middle| -\frac{(y_i-y_j,y_i-y_j)}{2}\right]\right)\bar{z}_j\\
  &=&\frac{1}{K} \int_0^\infty \sum_{i,j=1}^n  z_i \e^{-\tau (y_i-y_j,y_i-y_j)/2} \bar{z}_j H^{m,0}_{p,m} \left[ \tau \,\Bigg|\genfrac{}{}{0pt}{}{(a_i,\alpha_i)_{1,p}}{(b_j,\beta_j)_{1,m}} \right]\mathrm{d}\tau\\ 
  &=& \frac{1}{K} \int_0^\infty \sum_{i,j=1}^n  z_i C_{\mu_{\tau I}}(y_i-y_j) \bar{z}_j H^{m,0}_{p,m} \left[ \tau \,\Bigg|\genfrac{}{}{0pt}{}{(a_i,\alpha_i)_{1,p}}{(b_j,\beta_j)_{1,m}} \right]\mathrm{d}\tau\geq 0,\end{eqnarray*}
    where $C_{\mu_{\tau I}}(\cdot)$ is the characteristic function of a Gaussian random variable in $\R^d$ with zero mean and covariance matrix $\tau I$, where $I$ is the identity matrix.
  \end{proof}

\begin{remark}\label{rem:Wright'sCharacteFunctionEntire}
Under the assumptions of Lemma~\ref{lem:HFiniteDensities}  we obtain the series representation of $_m\!\Psi_p$ with $z \in \C$:
\begin{equation}\label{eq:PsiSeries} _m\!\Psi_p\left[ \genfrac{}{}{0pt}{}{(b_i+\beta_i,\beta_i)_{1,m}}{(a_j+\alpha_j,\alpha_j)_{1,p}} \middle| -\frac{ z }{2} \right]=\sum_{n \geq 0} \frac{\prod_{i=1}^m \Gamma(b_i + \beta_i (n+1))}{\prod_{i=1}^p \Gamma(a_i + \alpha_i (n+1))} \frac{(-z)^n}{n!2^n}, \end{equation}
see also \cite{KST02}. 
\end{remark}
 \subsection{Generalized Wright Measures on the Euclidean Spaces}
 Generalized Wright measures are defined via the Bochner theorem by using $_m\!\Psi_p(-(\cdot,\cdot)/2)$ as a characteristic function on $\R^d$ as follows,  as consequence of Corollary~\ref{cor:WrightCharactFunct}. Here we also give the formula for the mixed moments and compute its Radon-Nikodym derivative w.r.t.~the Lebesgue measure $\mathrm{d}x$, i.e.~$\mathrm{d}\mu^d_\Psi(x)=\varrho_H^d(x)\,\mathrm{d}x$, $x\in\R^d$.

\begin{definition}\label{def:finite-dim-gWm}
    Let $d \in \N$ be given and assume that Lemma~\ref{lem:HFiniteDensities} is valid. We define $\mu^d_\Psi$ as the unique probability measure in $\R^d$ with the characteristic function given by
    \[  \int_{\R^d} \exp(\mathrm{i} (y,x))\,\mathrm{d}\mu_{\Psi}^d(x)=\frac{1}{K}\, _m\!\Psi_p\left[ \genfrac{}{}{0pt}{}{(b_i+\beta_i,\beta_i)_{1,m}}{(a_j+\alpha_j,\alpha_j)_{1,p}} \middle| -\frac{(y,y ) }{2} \right],\quad  y \in \R^d,\]
    where $K$ is given in \eqref{eq:norm-constant}.
\end{definition}

We compute the mixed moments and explicitly show the Radon-Nikodym derivative of $\mu_\Psi^d$, denoted by $\varrho^d_H(\cdot)$.
It follows from Equation~\eqref{eq:PsiSeries} that the family of measures, $\mu_{\Psi}^d$, have moments of any order.

\begin{lemma}[Moments]
    The measure $\mu_{\Psi}^d$ has moments of any order. More precisely, if
\[
M^{\mu_{\Psi}^d}(n_1,\dots,n_d):= \int_{\R^d} x_1^{n_1} \dots x_d^{n_d} \,\mathrm{d}\mu_{\Psi}^d(x),  
\]
where $n_1,\dots,n_d \in \N_0$, then $M^{\mu_{\Psi}^d}(n_1,\dots,n_d)=0$ if any of $n_i$ is odd, and for even moments we have
\[  
M^{\mu_{\Psi}^d}(2n_1,\dots,2n_d)=\frac{1}{K}\frac{\prod_{i=1}^m \Gamma(b_i + \beta_i (n+1))}{\prod_{i=1}^p \Gamma(a_i + \alpha_i (n+1))} \frac{(2n_1)! \cdots (2n_d)! }{n_1! \dots n_d!2^{n}},
\]
where $n=n_1 + \dots + n_d$.
\end{lemma}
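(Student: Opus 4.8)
The plan is to read the mixed moments off the Taylor expansion of the characteristic function at the origin. First I would record that, under the hypotheses of Lemma~\ref{lem:HFiniteDensities} (in particular $a^*\in(0,1)$), the map $z\mapsto {}_m\!\Psi_p[\cdots\mid -z/2]$ is entire, so $y\mapsto\widehat{\mu_\Psi^d}(y):=\int_{\R^d}\e^{\mathrm{i}(y,x)}\,\mathrm{d}\mu_\Psi^d(x)$ is real-analytic on $\R^d$; hence $\mu_\Psi^d$ has finite absolute moments of all orders, differentiation under the integral sign is legitimate, and $\widehat{\mu_\Psi^d}(y)=\sum_{\alpha\in\N_0^d}\tfrac{\mathrm{i}^{|\alpha|}}{\alpha!}M^{\mu_\Psi^d}(\alpha)\,y^\alpha$ with $M^{\mu_\Psi^d}(\alpha)=\int_{\R^d}x^\alpha\,\mathrm{d}\mu_\Psi^d(x)$, $y^\alpha=y_1^{\alpha_1}\cdots y_d^{\alpha_d}$.

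Next I would expand the explicit right-hand side of Definition~\ref{def:finite-dim-gWm} via \eqref{eq:PsiSeries}, obtaining $\widehat{\mu_\Psi^d}(y)=\frac1K\sum_{n\ge0}c_n\tfrac{(-1)^n}{n!\,2^n}(y,y)^n$ with $c_n:=\prod_{i=1}^m\Gamma(b_i+\beta_i(n+1))/\prod_{i=1}^p\Gamma(a_i+\alpha_i(n+1))$, and then apply the multinomial theorem to $(y,y)^n=(y_1^2+\dots+y_d^2)^n=\sum_{k_1+\dots+k_d=n}\tfrac{n!}{k_1!\cdots k_d!}\,y_1^{2k_1}\cdots y_d^{2k_d}$. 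Since only even powers of each $y_i$ occur, this at once gives $M^{\mu_\Psi^d}(n_1,\dots,n_d)=0$ as soon as one $n_i$ is odd. For the even moments, with $n:=n_1+\dots+n_d$, I would compare the coefficient of $y_1^{2n_1}\cdots y_d^{2n_d}$ in the two expansions: the analytic side contributes $\tfrac{(-1)^n}{(2n_1)!\cdots(2n_d)!}M^{\mu_\Psi^d}(2n_1,\dots,2n_d)$ and the explicit side $\tfrac1K c_n\tfrac{(-1)^n}{2^n\,n_1!\cdots n_d!}$ (the factor $n!$ from the multinomial coefficient cancels the $n!$ in the denominator); solving for the moment yields the asserted formula.

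An alternative route I would keep in reserve is the probabilistic one: the computation in the proof of Corollary~\ref{cor:WrightCharactFunct} exhibits $\widehat{\mu_\Psi^d}(y)=\frac1K\int_0^\infty \e^{-\tau(y,y)/2}H^{m,0}_{p,m}\big[\tau\mid\cdots\big]\,\mathrm{d}\tau$, i.e.\ $\mu_\Psi^d$ is the law of $\sqrt{T}\,Z$ with $Z\sim N(0,I_d)$ and $T$ independent with the F$H$dam density $\varrho$ of \eqref{eq:densitiesFiniteMoments}. Then $M^{\mu_\Psi^d}(n_1,\dots,n_d)=\mathbb{E}\!\left[T^{n/2}\right]\prod_{i=1}^d\mathbb{E}\!\left[Z_i^{n_i}\right]$, which is $0$ unless all $n_i$ are even, while $\mathbb{E}[Z_i^{2n_i}]=(2n_i)!/(2^{n_i}n_i!)$ and, by Lemma~\ref{lem:HFiniteDensities}, $\mathbb{E}[T^n]=\frac1K c_n$ — again the stated formula.

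The one delicate point, in either approach, is the passage turning a representation of $\widehat{\mu_\Psi^d}$ into statements about $\int x^\alpha\,\mathrm{d}\mu_\Psi^d$: in the first approach this is the identification of the Taylor coefficients with moments, valid precisely because $a^*\in(0,1)$ makes ${}_m\!\Psi_p$, hence $\widehat{\mu_\Psi^d}$, entire (forcing all moments finite and the expansion about $0$ convergent); in the second it is the Tonelli interchange yielding the Gaussian-mixture representation, which is harmless since the $H$-kernel is a nonnegative integrable density and $|\e^{\mathrm{i}(y,x)}|\le1$. Everything else is bookkeeping with multinomial coefficients and the classical Gaussian moments.
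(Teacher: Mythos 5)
Your main argument is correct and is essentially the paper's own proof: the paper applies $\mathrm{i}^{-|\kappa|}\partial_{y}^{\kappa}$ at $y=0$ to the defining identity for $\mu_{\Psi}^d$ and expands the right-hand side via \eqref{eq:PsiSeries}, which is exactly your comparison of Taylor coefficients against the multinomial expansion of $(y,y)^n$; your explicit remark that entireness of ${}_m\!\Psi_p$ (from $a^*\in(0,1)$) is what legitimizes identifying the Taylor coefficients with the moments is a point the paper treats more lightly. Your reserve route, writing $\mu_{\Psi}^d$ as the law of $\sqrt{T}\,Z$ with $T$ distributed according to the F$H$dam of Lemma~\ref{lem:HFiniteDensities} and $Z$ standard Gaussian, is also sound and somewhat cleaner, since the moment formula then factorizes into $\mathbb{E}[T^{n}]$ times classical Gaussian moments; it anticipates the mixture structure the paper only records afterwards in Corollary~\ref{cor:mixture-fd}.
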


\begin{proof}
Let $\kappa=(\kappa_1, \dots, \kappa_d)\in \N_0^d$ and $f \in C^{\infty}(B_R(0))$ with $B_R(0) \subset \R^d$, $R>0$, be given. We introduce the multi-index notation to define the operator $\partial_{x}^{\kappa}$ with index $\kappa$ and $x=(x_1,\dots,x_d) \in \R^d$ on $C^{\infty}(B_R(0))$ as: 
\[
\partial_{x}^{\kappa} f:=\partial^{\kappa_1}_1 \dots \partial^{\kappa_d}_d f(x), 
\]
where $\partial^{\kappa_i}_i:=\partial^{\kappa_i} \, / \partial x^{\kappa_i}_i$, $i=1,\dots,d$.
 We compute the mixed moments of order $(\kappa_1,\dots, \kappa_d)$ by applying the operator $\mathrm{i}^{-|\kappa|}\partial_{y}^{\kappa}$ with index $\kappa$, $|\kappa|=\kappa_1+\dots+\kappa_d$ and $y=(y_1,\dots,y_n) \in \R^d$  on 
\[ 
\int_{\R^d} \exp\left(\mathrm{i} \left(\sum_{i=1}^d y_ie_i,x \right)\right)\mathrm{d}\mu_{\Psi}^d(x)=\frac{1}{K}\, _m\!\Psi_p\left[ \genfrac{}{}{0pt}{}{(b_i+\beta_i,\beta_i)_{1,m}}{(a_j+\alpha_j,\alpha_j)_{1,p}} \middle| -\frac{\left(y, y \right) }{2} \right]
\]
where $\{e_i\}_{i=1}^d$ is the canonical orthogonal basis of $\R^d$. The operator applied to the left-hand side of the above equation (at $y=0$) gives the mixed moments of order $(\kappa_1,\dots, \kappa_d)$, while the right-hand side gives
\begin{eqnarray*}
 \int_{\R^d} x_1^{\kappa_1} \dots x_d^{\kappa_d}\,\mathrm{d}\mu_{\Psi}^d(x)&=&\mathrm{i}^{-|\kappa|}\partial_{y}^{\kappa} \left( \frac{1}{K}\, _m\!\Psi_p\left[ \genfrac{}{}{0pt}{}{(b_i+\beta_i,\beta_i)_{1,m}}{(a_j+\alpha_j,\alpha_j)_{1,p}} \middle| -\frac{\left(y, y \right)}{2} \right] \right)\Bigg|_{y=0}\\
 &\overset{*}{=}&\frac{\mathrm{i}^{-|\kappa|}}{K}\sum_{l \geq 0} \frac{\prod_{i=1}^m \Gamma(b_i + \beta_i (l+1))}{\prod_{i=1}^p \Gamma(a_i + \alpha_i (l+1))} \frac{(-1)^l}{l!2^l}\partial_{y}^{\kappa} \left(\sum_{i=1}^d y_i ^2\right)^l \Bigg|_{y=0} \\
 &\overset{**}{=}&\frac{1}{K}\frac{\prod_{i=1}^m \Gamma(b_i + \beta_i (n+1))}{\prod_{i=1}^p \Gamma(a_i + \alpha_i (n+1))} \frac{ \prod_{i=1}^d(2n_i)!  }{n! 2^{n}} \dbinom{n}{n_1,\dots, n_d},
\end{eqnarray*}
where in $*$ we use the series representation of the function $_m\!\Psi_p(\cdot)$ given in Equation~\eqref{eq:PsiSeries} and in $**$ we used the equation below:
\[   \partial_{y}^{\kappa}\left(\sum_{i=1}^d y_i^2\right)^l\Bigg|_{y=0}=\begin{cases} 0, & \text{ if any of $\kappa_i$ is odd or }l\neq 2n,\\
            \dbinom{n}{n_1!\dots n_d!} \prod_{i=1}^d(2n_i)!, & \text{ otherwise} l=2n \end{cases}\]
with $n:=n_1+\dots+n_d$ where $\kappa_i=2n_i,\, i=1,\dots,d$, and
\[ 
\binom{n}{n_1,  \dots, n_d}:=\frac{n!}{n_1! \dots n_d!}.
\]
This concludes the proof.
\end{proof}
\begin{remark}
We recover the moments of the finite-dimensional Mittag-Leffler measure in Lemma~2.4 in \cite{JahnI}, imposing $m=1$, $p=1$, $b_1=0,\, \beta_1=1,\, a_1=1-\rho,\, \alpha_1=\rho$; see Section~6 in \cite{KST02} and Example 5.5 in \cite{BCDS23}.
\end{remark}
The next theorem establishes that each measure $\mu_\Psi^d$ is absolutely continuous w.r.t.~the Lebesgue measure on $\mathbb{R}^d$ and identifies its Radon-Nikodym derivative.
\begin{theorem}\label{thm:FiniteDimensionalFoxHGaussianDensity}
    For $d \in \N$, let Lemma~\ref{lem:HFiniteDensities} holds  together with $2(b_j + \beta_j)> \beta_jd$, for $j=1,\dots,m$.
    Then the $d$-dimensional generalized Wright measure $\mu_{\Psi}^d$ is absolutely continuous w.r.t.~the $d$-dimensional Lebesgue measure and its Radon-Nikodym density is expressed for every $x \in \R^d$ by
 \begin{equation}\label{eq:FiniteDimensionalFoxHGaussianDensity}  \varrho_H^d(x):= \frac{1}{(2 \pi)^{d/2}K} H^{m+1,0}_{p,m+1}\left[ \frac{(x,x)}{2}\,\bigg| \genfrac{}{}{0pt}{}{(a_i+\alpha_i(1-d/2),\alpha_i)_{1,p}}{(0,1),(b_{j-1}+\beta_{j-1}(1-d/2),\beta_{j-1})_{2,m+1} }\right]. 
    \end{equation}
\end{theorem}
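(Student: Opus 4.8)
The plan is to exploit the Gaussian–mixture representation of $\mu_\Psi^d$ that already appeared in the proof of Corollary~\ref{cor:WrightCharactFunct}, and then to evaluate the resulting mixture integral as a Mellin–Barnes integral, recognising it as a Fox-$H$ function.

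First I would settle absolute continuity with essentially no work. By Lemma~\ref{lem:HFiniteDensities} the F$H$dam $\varrho(\tau)=\frac1K H^{m,0}_{p,m}\!\left[\tau\mid{(a_i,\alpha_i)_{1,p}\atop(b_j,\beta_j)_{1,m}}\right]$ is a probability density on $(0,\infty)$, and the computation in the proof of Corollary~\ref{cor:WrightCharactFunct} shows that the characteristic function of $\mu_\Psi^d$ in Definition~\ref{def:finite-dim-gWm} equals $\int_0^\infty \e^{-\tau(y,y)/2}\varrho(\tau)\,\mathrm{d}\tau$, i.e. the characteristic function of the Gaussian mixture $\int_0^\infty N_{0,\tau I}\,\varrho(\tau)\,\mathrm{d}\tau$, where $N_{0,\tau I}$ is the centred Gaussian on $\R^d$ with covariance $\tau I$. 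By the uniqueness in Definition~\ref{def:finite-dim-gWm}, $\mu_\Psi^d=\int_0^\infty N_{0,\tau I}\,\varrho(\tau)\,\mathrm{d}\tau$; since each $N_{0,\tau I}$ has Lebesgue density $(2\pi\tau)^{-d/2}\e^{-(x,x)/(2\tau)}$, Tonelli yields that $\mu_\Psi^d$ is absolutely continuous with $\varrho_H^d(x)=\frac{1}{(2\pi)^{d/2}K}\int_0^\infty \tau^{-d/2}\e^{-r/\tau}\,H^{m,0}_{p,m}\!\left[\tau\mid{(a_i,\alpha_i)_{1,p}\atop(b_j,\beta_j)_{1,m}}\right]\mathrm{d}\tau$, where $r:=(x,x)/2>0$.

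The core of the argument is to identify this last integral. I would substitute the Mellin–Barnes representation $\e^{-r/\tau}=\frac{1}{2\pi\mathrm{i}}\int_{\mathcal{L}'}\Gamma(\xi)\,r^{-\xi}\tau^{\xi}\,\mathrm{d}\xi$ along a vertical line $\Re\xi=c$ with $c>0$ small, interchange the $\tau$- and $\xi$-integrations, and use that $\int_0^\infty \tau^{\xi-d/2}H^{m,0}_{p,m}\!\left[\tau\mid\cdots\right]\mathrm{d}\tau$ is the Mellin transform of the Fox-$H$ kernel, so by Equation~\eqref{def:KernelFunctionFoxH} it equals $\mathcal H^{m,0}_{p,m}(\xi+1-d/2)=\prod_{j=1}^m\Gamma\!\big(b_j+\beta_j(\xi+1-d/2)\big)\big/\prod_{i=1}^p\Gamma\!\big(a_i+\alpha_i(\xi+1-d/2)\big)$. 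Writing $b_j+\beta_j(\xi+1-d/2)=\big(b_j+\beta_j(1-d/2)\big)+\beta_j\xi$ and likewise for the $a_i$, the integrand becomes $\Gamma(\xi)\prod_{j=1}^m\Gamma\!\big((b_j+\beta_j(1-d/2))+\beta_j\xi\big)\big/\prod_{i=1}^p\Gamma\!\big((a_i+\alpha_i(1-d/2))+\alpha_i\xi\big)\cdot r^{-\xi}$, which is exactly the Mellin–Barnes integrand of $H^{m+1,0}_{p,m+1}\!\left[r\mid{(a_i+\alpha_i(1-d/2),\alpha_i)_{1,p}\atop (0,1),\,(b_j+\beta_j(1-d/2),\beta_j)_{1,m}}\right]$, the extra lower parameter $(0,1)$ coming from the factor $\Gamma(\xi)$. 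Putting back $r=(x,x)/2$ and the prefactor $\frac{1}{(2\pi)^{d/2}K}$ gives Equation~\eqref{eq:FiniteDimensionalFoxHGaussianDensity}. The hypothesis $2(b_j+\beta_j)>\beta_j d$ is precisely $b_j+\beta_j(1-d/2)>0$ for every $j$: it places the poles of $\Gamma\!\big((b_j+\beta_j(1-d/2))+\beta_j\xi\big)$ strictly to the left of $\Re\xi=c$ (keeping them separated from the poles of $\Gamma(\xi)$ at $0,-1,-2,\dots$), and, together with Assumption~\ref{ass:AllHdensity} (which gives $b_j/\beta_j>-1$ and controls $a^*$, hence makes $\widetilde a^\ast:=1+\sum_j\beta_j-\sum_i\alpha_i$ positive so that the $\mathcal{L}_{\mathrm{i}\gamma\infty}$ contour is admissible and the target $H$-function is entire), it secures the finiteness of $\int_0^\infty\tau^{c}\,|H^{m,0}_{p,m}[\tau\mid\cdots]|\,\mathrm{d}\tau$ needed for Fubini — near $\tau=0$ the kernel is $O(\tau^{\min_j b_j/\beta_j})$ with $\min_j b_j/\beta_j>-1$, and near $\tau=\infty$ it decays faster than any power since $\varrho$ has moments of all orders (Lemma~\ref{lem:HFiniteDensities}).

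The main obstacle is the rigorous justification of the contour manipulations: choosing the line $\Re\xi=c$ consistently with both the Mellin–Barnes representation of the exponential and the Mellin transform of the F$H$dam, checking absolute convergence for the Fubini interchange, and correctly tracking the parameter shifts $a_i\mapsto a_i+\alpha_i(1-d/2)$, $b_j\mapsto b_j+\beta_j(1-d/2)$ together with the inserted $(0,1)$; all of these rest on the new hypothesis $2(b_j+\beta_j)>\beta_j d$ and Assumption~\ref{ass:AllHdensity}. As an alternative route one may avoid the by-hand computation altogether by invoking a known closed form for the integral $\int_0^\infty\tau^{\rho-1}\e^{-r/\tau}H^{m,0}_{p,m}[\tau\mid\cdots]\,\mathrm{d}\tau$ — a Laplace/Mellin-convolution formula for Fox-$H$ functions from the standard monographs on $H$-transforms — which returns an $H$-function with precisely the parameters in Equation~\eqref{eq:FiniteDimensionalFoxHGaussianDensity}.
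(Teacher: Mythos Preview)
Your proposal is correct and follows essentially the same route as the paper: both start from the Gaussian-mixture representation and reduce to evaluating $\int_0^\infty \tau^{-d/2}\e^{-(x,x)/(2\tau)}H^{m,0}_{p,m}[\tau\mid\cdots]\,\mathrm{d}\tau$. The paper simply takes your ``alternative route'': it rewrites $\e^{-(x,x)/(2\tau)}$ as the Fox-$H$ function $H^{0,1}_{1,0}$ (via Equations~(2.9.4), (2.1.3) in \cite{SaiKil}) and then invokes the Mellin-convolution formula for products of $H$-functions (Equation~(2.8.4) when $a^*>0$, or (2.8.12) when $a^*=0$ and $\mu<-1$), whereas you carry out the underlying Mellin--Barnes computation by hand; the hypothesis $2(b_j+\beta_j)>\beta_jd$ plays exactly the role you identify.
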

\begin{proof}
Let $y \in \R^d$ be given. We have 
\begin{eqnarray*}
    &&\int_{\R^d} \exp(\mathrm{i} ( y, x))\,\mathrm{d}\mu_{\Psi}^d(x)\\
    &=&\frac{1}{K}\, _m\!\Psi_p\left[ \genfrac{}{}{0pt}{}{(b_i+\beta_i,\beta_i)_{1,m}}{(a_j+\alpha_j,\alpha_j)_{1,p}} \middle| -\frac{(y,y) }{2} \right]\\
    &=&\frac{1}{K}\int_0^\infty \exp\left(-\frac{(y,y) }{2} \tau\right) H^{m,0}_{p,m}(\tau)\, \mathrm{d}\tau\\
    &=&\frac{1}{K}\int_0^\infty \int_{\R^d}\exp(\mathrm{i}(y,x))  \frac{1}{(2 \pi \tau)^{d/2}}\exp\left(-\frac{(x,x)}{2 \tau} \right) \mathrm{d}x H^{m,0}_{p,m}(\tau)\, \mathrm{d}\tau\\
    &\overset{*}{=}&\frac{1}{K}\int_{\R^d} \exp(\mathrm{i}(y,x)) \int_0^\infty  \frac{1}{(2 \pi \tau)^{d/2}} H^{0,1}_{1,0}\left[ \frac{2 \tau}{(x,x)}\,\bigg| \genfrac{}{}{0pt}{}{(1,1)}{-\!-}\right]  H^{m,0}_{p,m}(\tau)\,\mathrm{d}\tau\,  \mathrm{d}x\\
    &\overset{**}{=}&\int_{\R^d} \exp(\mathrm{i}(y,x))\varrho_H^d(x) \mathrm{d}x
    \end{eqnarray*}
where in $*$ we used Equations~(2.9.4) and (2.1.3) in \cite{SaiKil} and in $**$ we use $2(b_j + \beta_j)> \beta_jd$, $j=1,\dots,m$ to employ Equation~(2.8.4) in \cite{SaiKil} if $a^*>0$, otherwise Equation (2.8.12) in \cite{SaiKil} if $a^*=0$ holds with $\mu<-1$. Finally, we exploit Equations~(2.1.3) and (2.1.5) in \cite{SaiKil} to obtain \eqref{eq:FiniteDimensionalFoxHGaussianDensity}.
\end{proof}

\begin{remark}
\begin{enumerate}
\item Equation~\eqref{eq:FiniteDimensionalFoxHGaussianDensity} coincides with Equation~(3.23) in \cite{S92} for $b_1=0,\, \beta_1=1,\, a_1=1-\rho,\, \alpha_1=\rho$ which is the density of the $d$-dimensional Mittag-Leffler measure;
\item  The additional assumptions on the parameters $b_j,\beta_j,j=1,\dots,m$ in Theorem~\ref{thm:FiniteDimensionalFoxHGaussianDensity} are consistent with the hypothesis in Assumption~\ref{ass:AllHdensity}.
\item In the above theorem, Equation (2.8.12) is needed as an extension of Equation (2.8.4) in \cite{SaiKil} if $a^*=0$ holds with $\mu<-1$, instead of $a^*>0$.
\end{enumerate}
\end{remark}

Measures in class $\mathcal{M}_\Psi(\mathbb{R}^d)$ can be expressed as a mixture of Gaussian measures with a proper probability measure on $(0,\infty)$. More precisely, we have

\begin{corollary}\label{cor:mixture-fd}
Under the hypothesis of Theorem \ref{thm:FiniteDimensionalFoxHGaussianDensity} the density $\varrho_H^d(\cdot)$ is a Gaussian mixture as follows:
\begin{align*}
      \varrho_H^d(x)&=\frac{1}{(2 \pi)^{d/2}K} H^{m+1,0}_{p,m+1}(x)\\
      &=\frac{1}{(2 \pi)^{d/2}K}\int_0^\infty  \frac{1}{\tau^{d/2}} \exp\left(-\frac{(x,x)}{2 \tau} \right)  H^{m,0}_{p,m}(\tau)\,\mathrm{d}\tau, \quad x \in \R^d  .
\end{align*}
\end{corollary}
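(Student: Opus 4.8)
The plan is to derive this corollary directly from the representation of the density already established in Theorem~\ref{thm:FiniteDimensionalFoxHGaussianDensity}. Indeed, the first equality in the statement is just Equation~\eqref{eq:FiniteDimensionalFoxHGaussianDensity} rewritten in the abbreviated $H$-function notation, so nothing is to be proved there. The substance of the corollary is the second equality, which exhibits $\varrho_H^d$ as a scale mixture of centered Gaussian densities in $\R^d$ with mixing measure $\frac{1}{K}H^{m,0}_{p,m}(\tau)\,\mathrm{d}\tau$ on $(0,\infty)$ (this being a genuine probability measure by Lemma~\ref{lem:HFiniteDensities}).

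First I would recall the chain of equalities inside the proof of Theorem~\ref{thm:FiniteDimensionalFoxHGaussianDensity}: there it is shown that
\[
\int_{\R^d}\e^{\mathrm{i}(y,x)}\,\mathrm{d}\mu_\Psi^d(x)
=\frac{1}{K}\int_0^\infty\int_{\R^d}\e^{\mathrm{i}(y,x)}\frac{1}{(2\pi\tau)^{d/2}}\exp\!\left(-\frac{(x,x)}{2\tau}\right)\mathrm{d}x\;H^{m,0}_{p,m}(\tau)\,\mathrm{d}\tau,
\]
and that after interchanging the order of integration the inner $\tau$-integral is precisely $\varrho_H^d(x)$. Thus the identity
\[
\varrho_H^d(x)=\frac{1}{(2\pi)^{d/2}K}\int_0^\infty\frac{1}{\tau^{d/2}}\exp\!\left(-\frac{(x,x)}{2\tau}\right)H^{m,0}_{p,m}(\tau)\,\mathrm{d}\tau
\]
is already contained in that argument; it remains only to observe that the two representations of $\varrho_H^d$ — the Gaussian-mixture integral and the $H^{m+1,0}_{p,m+1}$ form — agree, which is exactly what the last two lines of the proof of Theorem~\ref{thm:FiniteDimensionalFoxHGaussianDensity} establish via the reduction formulas (2.1.3), (2.1.5), (2.8.4)/(2.8.12) of \cite{SaiKil}. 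Hence the corollary follows by simply extracting and restating that portion of the theorem's proof.

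Concretely, the steps are: (i) invoke Theorem~\ref{thm:FiniteDimensionalFoxHGaussianDensity} to write $\varrho_H^d(x)=\frac{1}{(2\pi)^{d/2}K}H^{m+1,0}_{p,m+1}(x)$, giving the first line; (ii) recall from the proof of that theorem that this $H$-function is obtained, via the cited $H$-function identities, from $\int_0^\infty (2\pi\tau)^{-d/2}\exp(-(x,x)/(2\tau))\,H^{m,0}_{p,m}(\tau)\,\mathrm{d}\tau$ by combining the Gaussian kernel (written as an $H^{0,1}_{1,0}$ function) with $H^{m,0}_{p,m}(\tau)$ through the Mellin convolution formula; (iii) pull the constant $(2\pi)^{-d/2}$ out to display the stated mixture formula; (iv) note that $\tau\mapsto \frac{1}{K}H^{m,0}_{p,m}(\tau)$ is a probability density on $(0,\infty)$ by Lemma~\ref{lem:HFiniteDensities}, so this is genuinely a Gaussian mixture. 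Since everything is assumed under the hypotheses of Theorem~\ref{thm:FiniteDimensionalFoxHGaussianDensity} (in particular $2(b_j+\beta_j)>\beta_j d$ and either $a^*>0$ or $a^*=0$ with $\mu<-1$), the interchange of integrals and the $H$-function manipulations are all justified there.

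There is essentially no obstacle here: the corollary is a corollary in the literal sense, repackaging an intermediate identity from the preceding proof. The only minor point requiring a word of care is the justification of the Fubini step (interchanging $\int_0^\infty$ and $\int_{\R^d}$), but this was already carried out under the stated hypotheses in the proof of Theorem~\ref{thm:FiniteDimensionalFoxHGaussianDensity}, and the non-negativity of the integrand — guaranteed by item (4) of Assumption~\ref{ass:AllHdensity} together with positivity of the Gaussian kernel — makes Tonelli's theorem applicable without any additional integrability check. Accordingly I would keep the proof to a couple of lines, pointing back to Theorem~\ref{thm:FiniteDimensionalFoxHGaussianDensity} and Lemma~\ref{lem:HFiniteDensities}.
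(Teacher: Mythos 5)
Your proposal is correct and matches the paper's intent exactly: the paper states this corollary without a separate proof precisely because, as you observe, the Gaussian-mixture identity is the intermediate step (the lines marked $*$ and $**$) in the proof of Theorem~\ref{thm:FiniteDimensionalFoxHGaussianDensity}, combined with the H-function reductions identifying it with $H^{m+1,0}_{p,m+1}$. Nothing further is needed.
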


\begin{remark}[Multivariate Elliptical Distribution]
 It follows from Theorem~\ref{thm:FiniteDimensionalFoxHGaussianDensity} that the measures $\mu^d_\Psi$, $d\in\N$, are multivariate elliptical distributions, written as $E_d(0,I,\, _m\Psi_p(-\cdot))$, whose density generator is
\[ 
 H^{m+1,0}_{p,m+1}(x)=\int_0^\infty  \frac{1}{\tau^{d/2}} \exp\left(-\frac{(x,x)}{2 \tau} \right) H^{m,0}_{p,m}(\tau)\,\mathrm{d}\tau, \quad x \in \R^d, 
\]
see \cite{DaSil15}, \cite{Fan90} and \cite{Steu04}, Chap.~VI for a deeper insight into multivariate elliptical distributions. 
\end{remark}

\subsection{Fox-Hermite Polynomials}
We apply Gram-Schmidt orthogonalization to monomials $x^n,\, n \in \N_0$, to obtain monic polynomials $H^{\mu_{\Psi}^1}_n$, $n \in \N_0$, with $\mathrm{deg}\,H^{\mu_{\Psi}^1}_n=n$, which are orthogonal in $L^2(\R,\mu_{\Psi}^1)$. These polynomials are determined by the moments of the measure $\mu_{\Psi}^1$. The first four of these polynomials are given by 

\begin{eqnarray*} H^{\mu_{\Psi}^1}_0(x)&=&1,\\ 
H^{\mu_{\Psi}^1}_1(x)&=&x,\\ 
H^{\mu_{\Psi}^1}_2(x)&=&x^2-\frac{1}{K}\frac{\prod_{i=1}^m \Gamma(b_i+2\beta_i)}{\prod_{i=1}^p \Gamma(a_i+2\alpha_i)},\\ 
H^{\mu_{\Psi}^1}_3(x)&=&x^3-3\frac{\prod_{i=1}^m \Gamma(b_i+3\beta_i)}{\prod_{i=1}^p \Gamma(a_i+3\alpha_i)}\frac{\prod_{i=1}^p \Gamma(a_i+2\alpha_i)}{\prod_{i=1}^m \Gamma(b_i+2\beta_i)} x. \end{eqnarray*}

\begin{remark}
    Note that for $m=1$, $p=1$, $b_1=0$, $\beta_1=1$, $a_1=1-\rho$ and $\alpha_1=\rho$, the measure $\mu_{\Psi}^d$ is the finite-dimensional Mittag-Leffler measure, $\mu_\rho^1$, hence, in this case, these polynomials coincide with their homologous in space $L^2(\R,\mu_\rho^1)$; see \cite{JahnI}.
\end{remark}


Now we define a new class of polynomials and their generating function, which will be used to decompose the Laplace transform of a measure $\mu_\Psi \in \mathcal{M}_{\mu_\Psi}(\mathcal{N})$ and the $T_{\mu_\Psi}$-transform of Donsker's delta in Section~\ref{sec:DonskDelta}. For a review of Chebyschev-Hermite polynomials (also called Hermite polynomials); see \cite{Bog22}. We recall the definition of the Hermite polynomials in $x \in \R$ of degree $n$ and parameter $\sqrt{\tau}$, $\tau>0$,
\[  
H^{\tau}_n(x):=(-\tau)^n \exp\left(\dfrac{x^2}{2\tau}\right)  \dfrac{\mathrm{d}^n}{\mathrm{d} x^n} \exp\left( -\dfrac{x^2}{2\tau} \right), \quad n=0,1,2,\dots,  
\]
see Equation (A.1.13) in \cite{HKPS93}.

\begin{lemma}\label{lem:FoxHermitePolyPropert}
    Let the assumptions of Lemma \ref{lem:HFiniteDensities} hold. Then the following functions are monic polynomials of degree $n$ on $\R$
    \begin{equation}\label{eq:FoxHermitePolynomials}  F^H_{n}(x):=\frac{1}{K}\int_0^\infty  H^{\tau}_n \left(x\right)H^{m,0}_{p,m}\left[ \tau\,\bigg| \genfrac{}{}{0pt}{}{(a_i,\alpha_i)_p}{(b_j,\beta_j)_m}\right]  \mathrm{d}\tau, \quad n=0,1,\dots, \; x \in \R, \end{equation}
    where $H^{\tau}_n(x)$ are Hermite polynomials of degree $n$ and parameter $\sqrt{\tau}$. They have the following properties:

    \begin{enumerate}

    \item The following representation holds
\[ 
F^H_{n}(x)=\sum_{k=0}^{[n/2]} \frac{(-1)^k}{K}\frac{\prod_{i=1}^m \Gamma(b_i + \beta_i (k+1))}{\prod_{i=1}^p \Gamma(a_i + \alpha_i (k+1))}\binom{n}{2k}  (2k-1)!!x^{n-2k},
\]
    where $[n/2]$ is equal to $n/2$ if $n$ is even, otherwise it is equal to $(n-1)/2$;
    \item They satisfy $D_x F^H_{n}(x)=n F^H_{n-1}(x)$;
    \item They are generated by
    \[  e_H(x;t):= \frac{\exp(tx)}{K}\, _m\!\Psi_p\left[ \genfrac{}{}{0pt}{}{(b_i+\beta_i,\beta_i)_{1,m}}{(a_j+\alpha_j,\alpha_j)_{1,p}} \middle| -\frac{ t^2}{2} \right].\]
    \end{enumerate}
    We call $F_n^H(\cdot)$ Fox-Hermite polynomials. 
\end{lemma}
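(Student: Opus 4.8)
The plan is to reduce everything to known facts about the classical Hermite polynomials $H_n^\tau$ and then integrate against the $FH$dam density $\varrho(\tau)=\frac1K H^{m,0}_{p,m}[\tau\mid\cdots]$, using the finiteness of all moments from Lemma~\ref{lem:HFiniteDensities} to justify interchanging $\int_0^\infty$ with differentiation and with power-series summation. The starting point is the standard explicit expansion
\[
H_n^\tau(x)=\sum_{k=0}^{[n/2]}\binom{n}{2k}(2k-1)!!\,(-\tau)^k\,x^{n-2k},
\]
which one reads off from Equation~(A.1.13) in \cite{HKPS93} (here $(2k-1)!!$ counts the pairings, and the leading coefficient is $1$, so $H_n^\tau$ is monic of degree $n$). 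Substituting this into \eqref{eq:FoxHermitePolynomials} and integrating term by term — legitimate because each moment $\int_0^\infty\tau^k\varrho(\tau)\,\mathrm{d}\tau$ is finite — gives, using the moment formula from Lemma~\ref{lem:HFiniteDensities} with $l=k$,
\[
F_n^H(x)=\sum_{k=0}^{[n/2]}\binom{n}{2k}(2k-1)!!\,(-1)^k\Bigl(\tfrac1K\tfrac{\prod_{i=1}^m\Gamma(b_i+\beta_i(k+1))}{\prod_{i=1}^p\Gamma(a_i+\alpha_i(k+1))}\Bigr)x^{n-2k},
\]
which is exactly formula~(1); in particular the $k=0$ term is $x^n$ (since $\frac1K\frac{\prod\Gamma(b_i+\beta_i)}{\prod\Gamma(a_i+\alpha_i)}=1$ by \eqref{eq:norm-constant}), so $F_n^H$ is monic of degree $n$, establishing the opening assertion.

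For property~(2) I would use the classical lowering relation $\frac{\mathrm d}{\mathrm dx}H_n^\tau(x)=n\,H_{n-1}^\tau(x)$ (immediate from the expansion above, or from the Rodrigues-type definition given before the lemma), differentiate under the integral sign in \eqref{eq:FoxHermitePolynomials} — again justified by the moment bounds, which dominate the $x$-derivatives of the polynomial integrand locally uniformly in $x$ — and pull the factor $n$ out:
\[
D_xF_n^H(x)=\frac1K\int_0^\infty n\,H_{n-1}^\tau(x)\,H^{m,0}_{p,m}[\tau\mid\cdots]\,\mathrm{d}\tau=n\,F_{n-1}^H(x).
\]

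For property~(3), recall the classical generating function $\sum_{n\ge0}H_n^\tau(x)\frac{t^n}{n!}=\exp\bigl(tx-\tfrac{\tau t^2}{2}\bigr)$. Multiplying by $\varrho(\tau)$, integrating over $(0,\infty)$, and interchanging sum and integral (the tail estimate needs the growth control on the moments coming from $a^*\in(0,1)$, which makes $s\mapsto\,_m\Psi_p(-s/2)$ entire, cf.~Lemma~\ref{lem:HFiniteDensities} and Remark~\ref{rem:Wright'sCharacteFunctionEntire}), one gets
\[
\sum_{n\ge0}F_n^H(x)\frac{t^n}{n!}=\frac1K\int_0^\infty \exp\bigl(tx-\tfrac{\tau t^2}{2}\bigr)H^{m,0}_{p,m}[\tau\mid\cdots]\,\mathrm{d}\tau=\frac{\exp(tx)}{K}\,{}_m\Psi_p\!\left[\genfrac{}{}{0pt}{}{(b_i+\beta_i,\beta_i)_{1,m}}{(a_j+\alpha_j,\alpha_j)_{1,p}}\,\middle|\,-\frac{t^2}{2}\right],
\]
where the last equality is the Laplace-transform identity \eqref{eq:LTdensitiesFiniteMoments} evaluated at $s=t^2/2$. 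This identifies $e_H(x;t)$ as claimed.

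The only real subtlety — the "hard part" — is the rigorous justification of the three interchanges (sum$\leftrightarrow$integral for (1) and (3), derivative$\leftrightarrow$integral for (2)). For (1) this is harmless since the sum is finite. For (2) one needs a local dominating function for $\partial_x^j$ of the polynomial integrand, which is a finite linear combination of $\tau^k|x|^{n-2k-j}$ and hence dominated on any ball $|x|\le R$ by a constant times a finite sum of integrable moments $\tau^k\varrho(\tau)$. For (3) one must control $\sum_n\frac{|t|^n}{n!}\int_0^\infty|H_n^\tau(x)|\varrho(\tau)\,\mathrm{d}\tau$; bounding $|H_n^\tau(x)|$ crudely by $\sum_k\binom{n}{2k}(2k-1)!!\,\tau^k|x|^{n-2k}$ and using the moment formula reduces this to the absolute convergence of the series \eqref{eq:PsiSeries}-type expression, which holds on all of $\mathbb C$ precisely because $a^*\in(0,1)$ (Remark~\ref{rem:Wright'sCharacteFunctionEntire}). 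With these three dominated-convergence arguments in place, the three displayed identities follow and the proof is complete.
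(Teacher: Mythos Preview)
Your proposal is correct and follows essentially the same route as the paper: expand $H_n^\tau$ explicitly and integrate term by term against the F$H$dam to get (1) and the monic property; apply the Hermite lowering relation under the integral for (2); and use the Hermite generating function $\exp(tx-\tau t^2/2)$ together with the Laplace-transform identity \eqref{eq:LTdensitiesFiniteMoments} for (3). The only difference is that you spell out the dominated-convergence justifications more carefully than the paper does, which is a welcome addition rather than a change of strategy.
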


\begin{proof}

\begin{enumerate}
\item For $n \in \N_0$, let $H^{\tau}_n(\cdot)$ be the Hermite polynomials of degree $n$ and parameter $\sqrt{\tau}$, then 

\begin{eqnarray*}
    F_n^H(x)&=&\frac{1}{K}\int_0^\infty  H^{\tau}_n \left(x\right)H^{m,0}_{p,m}\left[ \tau\,\bigg| \genfrac{}{}{0pt}{}{(a_i,\alpha_i)_p}{(b_j,\beta_j)_m}\right] \mathrm{d}\tau\\
    &\overset{*}{=}&\sum_{k=0}^{[n/2]} (-1)^k\binom{n}{2k}  \frac{(2k-1)!!x^{n-2k}}{K}\int_0^\infty \tau^k  H^{m,0}_{p,m}\left[ \tau\,\bigg| \genfrac{}{}{0pt}{}{(a_i,\alpha_i)_p}{(b_j,\beta_j)_m}\right] \mathrm{d}\tau\\
    &=&\sum_{k=0}^{[n/2]} (-1)^k\binom{n}{2k}  \frac{(2k-1)!!x^{n-2k}}{K}\int_0^\infty \tau^k  H^{m,0}_{p,m}\left[ \tau\,\bigg| \genfrac{}{}{0pt}{}{(a_i,\alpha_i)_p}{(b_j,\beta_j)_m}\right] \mathrm{d}\tau\\
    &\overset{**}{=}& \sum_{k=0}^{[n/2]} \frac{(-1)^k}{K}\frac{\prod_{i=1}^m \Gamma(b_i + \beta_i (k+1))}{\prod_{i=1}^p \Gamma(a_i + \alpha_i (k+1))}\binom{n}{2k}  (2k-1)!!x^{n-2k}  \\
\end{eqnarray*}
where in $*$ we used Equations~(A.1.16) in \cite{HKPS93} and in $**$ we used Lemma~\ref{lem:HFiniteDensities}. We note that in Equations (A.1.16) in \cite{HKPS93} the term $[n/2]$ has the meaning of $n/2$ or $(n-1)/2$ depending on whether $n$ is even or odd, respectively. It is straightforward to deduce that $F_n^H(x)$ are monic polynomials of $n$ for $x \in \R$, specifically
\[  F^H_n(x)=x^n + \sum_{k=1}^{[n/2]} \frac{(-1)^k}{K}\frac{\prod_{i=1}^m \Gamma(b_i + \beta_i (k+1))}{\prod_{i=1}^p \Gamma(a_i + \alpha_i (k+1))}\binom{n}{2k}  (2k-1)!!x^{n-2k}, \]
taking into account $K$ given in \eqref{eq:norm-constant}.
\item The property $D_x F^H_{n}(x)=n F^H_{n-1}(x)$ follows directly by applying Equation~(A.1.20) in \cite{HKPS93} to Equation~\eqref{eq:FoxHermitePolynomials} interchanging the derivative and integral.
\item The function $e_H(x;t)$ can be expressed by 
\begin{eqnarray*}
e_H(x;t)&\overset{*}{=}&\exp(tx) \frac{1}{K}\int_0^\infty \exp(-\tau t^2/2)H^{m,0}_{p,m}\left[ \tau\,\bigg| \genfrac{}{}{0pt}{}{(a_i,\alpha_i)_p}{(b_j,\beta_j)_m}\right]\mathrm{d}\tau\\
&=&\frac{1}{K}\int_0^\infty \exp(tx-\tau t^2/2)H^{m,0}_{p,m}\left[ \tau\,\bigg| \genfrac{}{}{0pt}{}{(a_i,\alpha_i)_p}{(b_j,\beta_j)_m}\right] \mathrm{d}\tau \\
&\overset{**}{=}& \frac{1}{K}\int_0^\infty \sum_{n \geq 0}\frac{t^n}{n!} H^{\tau}_n (x)H^{m,0}_{p,m}\left[ \tau\,\bigg| \genfrac{}{}{0pt}{}{(a_i,\alpha_i)_p}{(b_j,\beta_j)_m}\right] \mathrm{d}\tau\\
&=&\sum_{n \geq 0} \frac{t^n}{n!}F^H_n(x),
\end{eqnarray*}
where we used Lemma~\ref{lem:HFiniteDensities} and the definition of $_m\!\Psi_p(\cdot)$ in $*$ and Equation~(A.1.15) in \cite{HKPS93} in $**$.
\end{enumerate}
\end{proof}

\section{Generalized Wright Measures}
\label{Sec:Fox-HAnalysis}
In this section, we define the class of probability measures $\mathcal{M}_\Psi(\mathcal{N}')$ on the co-nuclear space $\mathcal{N}'$ of a nuclear space $\mathcal{N}$.  Therefore, our starting point (see \cite{GV1968}) is an abstract nuclear Gelfand triple
\begin{equation}\label{eq:abs-nuclear-triple}
\mathcal{N}\subset \mathcal{H}\subset\mathcal{N}'.    
\end{equation}
We define the so-called generalized Wright probability space $(\mathcal{N}',C_\sigma(\mathcal{N}'), \mu_\Psi)$ by employing gWf $ _m \Psi_p(\cdot)$ in the characteristic functional. Hence, we define non-Gaussian Hilbert spaces $L^2(\mu_\Psi)$ and compute the mixed moments of $\mu_\Psi$. 

Let $\mathcal{H}$ be a real separable Hilbert space with inner product $(\cdot,\cdot)$ and corresponding norm $\left|  \cdot\right|$. Let us take $\mathcal{N}$ as a nuclear space that is densely and continuously embedded in $\mathcal{H}$. We denote by $\mathcal{N}'$  its dual space. Consequently, the canonical dual pairing between $\mathcal{N}'$ and $\mathcal{N}$ is denoted by $\langle\cdot,\cdot\rangle$, and if we identify $\mathcal{H}$ with its dual space via the Riesz isomorphism, we get the chain of inclusions $\mathcal{N}\subset\mathcal{H}\subset\mathcal{N}'$. The dual pairing is a bilinear extension of the inner product in $\mathcal{H}$, in fact, $\langle f,\varphi\rangle=(f,\varphi)$ for $f\in\mathcal{H}$, $\varphi\in\mathcal{N}$.

We assume also that $\mathcal{N}$ is represented by a countable family of Hilbert spaces: for each $l\in\N_0$ let $\mathcal{H}_l$ be a real separable Hilbert space with norm $\left|  \cdot\right|_l$ such that $\mathcal{N}\subset\mathcal{H}_{l+1}\subset\mathcal{H}_l\subset\mathcal{H}$ continuously and the inclusions $\mathcal{H}_{l+1}\subset\mathcal{H}_l, l \in \N_0$ are Hilbert--Schmidt operators. There is no loss of generality in assuming $\left|  \cdot\right|_l\leq \left|  \cdot\right|_{l+1}$ on $\mathcal{H}_{l+1}$ and $\mathcal{H}_0=\mathcal{H}$, $\left|  \cdot\right|_0=\left|  \cdot\right|$. Then, the space $\mathcal{N}$ turns out to be the projective limit of the spaces $(\mathcal{H}_l)_{l\in\N}$, that is, $\mathcal{N}=\bigcap_{l\in\N_0}\mathcal{H}_l$ and the topology on $\mathcal{N}$ is the coarsest topology such that all inclusions $\mathcal{N}\subset\mathcal{H}_l$ are continuous.

This also gives a representation of $\mathcal{N}'$ in terms of an inductive limit. Let $\mathcal{H}_{-l}$ be the dual space of $\mathcal{H}_l$ with respect to $\mathcal{H}$ and let the dual pairing between $\mathcal{H}_{-l}$ and $\mathcal{H}_l$ be indicated by $\langle\cdot,\cdot\rangle$ as well. Then $\mathcal{H}_{-l}$ is a Hilbert space and we use $\left|  \cdot\right|_{-l}$ to express its norm. It follows from general duality theory that $\mathcal{N}'=\bigcup_{l\in\N_0}\mathcal{H}_{-l}$, and we equip $\mathcal{N}'$ with the inductive topology, that is, the finest locally convex topology such that all inclusions $\mathcal{H}_{-l}\subset\mathcal{N}'$ are continuous.

We end up with the following chain of dense and continuous inclusions:
\[ \mathcal{N}\subset\mathcal{H}_{l+1}\subset\mathcal{H}_l\subset\mathcal{H}\subset\mathcal{H}_{-l}\subset\mathcal{H}_{-(l+1)}\subset\mathcal{N}'. \]
For all the real spaces above we also consider their complexifications, which will be distinguished by a subscript $\C$, e.g. the complexification of $\mathcal{H}_l$ is $\mathcal{H}_{l,\C}$ and so on. In the following,    we always identify $f = [f_1,f_2]\in\mathcal{H}_{l,\C}, f_1,f_2\in\mathcal{H}_l$ for $l\in\Z$ with $f=f_1 + \mathrm{i}f_2$. The dual pairing extends as a bilinear form to $\mathcal{N}'_{\mathbb{C}}\times\mathcal{N}_{\mathbb{C}}$.


Consider the map $C : \mathcal{N} \longrightarrow \C$ defined by
\[   
C(\varphi):=\frac{1}{K}\, _m\!\Psi_p\left[ \genfrac{}{}{0pt}{}{(b_i+\beta_i,\beta_i)_{1,m}}{(a_j+\alpha_j,\alpha_j)_{1,p}} \middle| -\frac{\langle \varphi,\varphi \rangle }{2} \right], \quad \varphi \in \mathcal{N}, 
\] 
with normalization constant $K$ given in \eqref{eq:norm-constant}. The map $C(\cdot)$ is a characteristic functional on $\mathcal{N}$. In fact, from Corollary~\ref{cor:WrightCharactFunct} it is positive definite by substituting the inner product on $\R^d$ by the inner product on $\mathcal{H}$.  
Thus, using the Bochner-Minlos theorem, there exists a unique probability measure $\mu_\Psi$ on $\mathcal{N}'$, equipped with the cylindrical $\sigma$-algebra $\mathcal{C}_\sigma(\mathcal{N}')$, such that its Fourier transform is equal to $C(\cdot)$.  
This motivates the following definition.

\begin{definition}[Generalized Wright measures] \label{def:FoxHmeasu}
Under the conditions of Lemma~\ref{lem:HFiniteDensities}, the generalized Wright measure (gWm for short) $\mu_{\Psi}$ is defined as the unique probability measure on $\mathcal{N}'$ such that  
\begin{equation} \label{eq:FoxHmeasure}   \int_{\mathcal{N}'}\mathrm{e}^{\mathrm{i}\langle \omega,\varphi \rangle}\,\mathrm{d}\mu_{\Psi}(\omega)=\frac{1}{K}\, _m\!\Psi_p\left[ \genfrac{}{}{0pt}{}{(b_i+\beta_i,\beta_i)_{1,m}}{(a_j+\alpha_j,\alpha_j)_{1,p}} \middle| -\frac{\langle \varphi,\varphi \rangle }{2} \right], \quad \varphi \in \mathcal{N}, \end{equation} 
where $K$ is given in \eqref{eq:norm-constant}.
We denote this class of measures by $\mathcal{M}_\Psi(\mathcal{N}')$ and the generalized Wright probability space by $(\mathcal{N}',\mathcal{C}_\sigma(\mathcal{N}'),\mu_{\Psi})$. The corresponding $L^p$ Banach spaces of complex-valued $\mathcal{C}_\sigma(\mathcal{N}')$-measurable  functions with integrable $p$-th power are denoted by $L^p(\mu_{\Psi}):=L^p\big(\mathcal{N}',\mathcal{C}_\sigma(\mathcal{N}'),\mu_{\Psi};\C\big)$, $p\geq 1$. The norm in $L^p(\mu_{\Psi})$ is denoted by $\| \cdot\|_{L^p(\mu_{\Psi})}$.
	\end{definition} 

 \begin{remark}
The class $\mathcal{M}_\Psi(\mathcal{N}')$ of gWm covers many well-known measures in the literature depending on the realization of the triple $\mathcal{N}\subset\mathcal{H}\subset\mathcal{N}'$ and the choice of the parameters of $_m\!\Psi_p$. For example, Mittag-Leffler measures, Gaussian measures, and fractional Gaussian measures; see \cite{JahnI} and \cite{DU99}. In addition, the class $\mathcal{M}_\Psi(\mathcal{N}')$ is a subset of analytic measures $\mathcal{M}_a(\mathcal{N}')$ (cf.~\cite{KSWY98}); see Section~\ref{sec:AnalypropFHmeasu} below.
 \end{remark}

The first elementary property of the elements in $\mathcal{M}_\Psi(\mathcal{N}')$ is the relationship with the finite-dimensional generalized Wright measures $\mu_\Psi^d$; see Definition~\ref{def:finite-dim-gWm}.

\begin{lemma}\label{lem:ProjeMoments}
    Let $\varphi_1,\dots,\varphi_d \in \mathcal{N}$ be orthonormal in $\mathcal{H}$, then the image measure of $\mu_{\Psi}$ under the mapping $\mathcal{N}' \ni \omega \mapsto (\langle \omega , \varphi_1 \rangle,\dots, \langle \omega , \varphi_d \rangle)\in\R^d$ is the finite-dimensional gWm $\mu_{\Psi}^d$.
\end{lemma}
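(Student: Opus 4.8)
The plan is to compute the characteristic function of the image measure directly and recognize it as the characteristic function that defines $\mu_\Psi^d$ in Definition~\ref{def:finite-dim-gWm}. Since a probability measure on $\R^d$ is uniquely determined by its Fourier transform (Bochner), it suffices to verify that the two characteristic functions agree on all of $\R^d$.

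First I would fix $y=(y_1,\dots,y_d)\in\R^d$ and denote by $\nu$ the image measure of $\mu_\Psi$ under the map $\pi\colon\mathcal{N}'\ni\omega\mapsto(\langle\omega,\varphi_1\rangle,\dots,\langle\omega,\varphi_d\rangle)\in\R^d$. By the change-of-variables formula for image measures,
\[
\int_{\R^d}\e^{\mathrm{i}(y,x)}\,\mathrm{d}\nu(x)=\int_{\mathcal{N}'}\e^{\mathrm{i}(y,\pi(\omega))}\,\mathrm{d}\mu_\Psi(\omega)=\int_{\mathcal{N}'}\exp\!\Big(\mathrm{i}\Big\langle\omega,\sum_{k=1}^d y_k\varphi_k\Big\rangle\Big)\,\mathrm{d}\mu_\Psi(\omega),
\]
where I have used linearity of the dual pairing in the second argument to write $\sum_{k=1}^d y_k\langle\omega,\varphi_k\rangle=\langle\omega,\sum_{k=1}^d y_k\varphi_k\rangle$ with $\varphi:=\sum_{k=1}^d y_k\varphi_k\in\mathcal{N}$. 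Now I apply the defining relation \eqref{eq:FoxHmeasure} for $\mu_\Psi$ with this test function $\varphi$, obtaining
\[
\int_{\mathcal{N}'}\e^{\mathrm{i}\langle\omega,\varphi\rangle}\,\mathrm{d}\mu_\Psi(\omega)=\frac{1}{K}\, _m\!\Psi_p\left[ \genfrac{}{}{0pt}{}{(b_i+\beta_i,\beta_i)_{1,m}}{(a_j+\alpha_j,\alpha_j)_{1,p}} \middle| -\frac{\langle\varphi,\varphi\rangle}{2}\right].
\]
The remaining step is to evaluate $\langle\varphi,\varphi\rangle$. Since $\varphi,\varphi\in\mathcal{H}$ the dual pairing reduces to the inner product in $\mathcal{H}$, and by orthonormality of $\varphi_1,\dots,\varphi_d$ in $\mathcal{H}$ we get $\langle\varphi,\varphi\rangle=(\varphi,\varphi)_{\mathcal{H}}=\sum_{k,l=1}^d y_ky_l(\varphi_k,\varphi_l)_{\mathcal{H}}=\sum_{k=1}^d y_k^2=(y,y)$. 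Substituting back shows that the characteristic function of $\nu$ coincides with
\[
\frac{1}{K}\, _m\!\Psi_p\left[ \genfrac{}{}{0pt}{}{(b_i+\beta_i,\beta_i)_{1,m}}{(a_j+\alpha_j,\alpha_j)_{1,p}} \middle| -\frac{(y,y)}{2}\right],
\]
which is precisely the characteristic function defining $\mu_\Psi^d$. By uniqueness of a probability measure with a given characteristic function, $\nu=\mu_\Psi^d$.

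There is no serious obstacle here; the argument is a routine transport-of-measure computation. The only points requiring a word of care are: (i) that $\varphi=\sum_k y_k\varphi_k$ indeed lies in $\mathcal{N}$ (immediate, since $\mathcal{N}$ is a vector space and each $\varphi_k\in\mathcal{N}$), so that \eqref{eq:FoxHmeasure} is applicable; (ii) that the bilinear dual pairing agrees with the $\mathcal{H}$-inner product on $\mathcal{H}$, which is exactly the identification $\langle f,\varphi\rangle=(f,\varphi)$ for $f\in\mathcal{H}$, $\varphi\in\mathcal{N}$ recalled in the construction of the triple; and (iii) measurability of $\pi$, which holds because each coordinate $\omega\mapsto\langle\omega,\varphi_k\rangle$ is $\mathcal{C}_\sigma(\mathcal{N}')$-measurable by definition of the cylindrical $\sigma$-algebra. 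Collecting these remarks, the identity $\nu=\mu_\Psi^d$ follows.
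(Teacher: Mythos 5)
Your proposal is correct and follows the same route as the paper's proof: compute the characteristic function of the image measure, rewrite the pairing with $\eta=\sum_{k}y_k\varphi_k\in\mathcal{N}$, use orthonormality to get $\langle\eta,\eta\rangle=(y,y)$, and conclude by uniqueness of the Fourier transform. The extra remarks on measurability and on the pairing agreeing with the $\mathcal{H}$-inner product are fine but not needed beyond what the paper already assumes.
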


\begin{proof}
Let $\varphi_1,\dots,\varphi_d \in \mathcal{N}$ be orthonormal in $\mathcal{H}$ and denote by $T$ the measurable map $T:(\mathcal{N}',\mathcal{C}_{\sigma}(\mathcal{N}'),\mu_\Psi)\longrightarrow(\R^d, \mathcal{B}(\R^d))$ defined by
\[
T(\omega):=\big(\langle \omega , \varphi_1 \rangle,\dots, \langle \omega , \varphi_d \rangle \big).
\] 
We denote by $T_* \mu_\Psi:=\mu_\Psi \circ T^{-1}$ the image of $\mu_\Psi$ under the map $T$.
It is sufficient to show that both measures ($\mu_\Psi^d$ and $T_*\mu_\Psi$) have the same Fourier transform. For $y=(y_1,\dots,y_d)\in \R^d$, we have

\begin{equation}\label{eq:image-measure}
    \int_{\R^d}\mathrm{e}^{\mathrm{i}(x,y)_{\R^d}}\,\mathrm{d}(T_* \mu_\Psi)(x)=\int_{\mathcal{N}'}\mathrm{e}^{\mathrm{i}( T \omega,y)_{\R^d}}\,\mathrm{d}\mu_{\Psi}(\omega) = \int_{\mathcal{N}'}\mathrm{e}^{\mathrm{i}\langle \omega, \eta\rangle}\,\mathrm{d}\mu_{\Psi}(\omega),
\end{equation}
where $\eta:=\sum_{i=1}^d y_i\varphi_i$. The right-hand side integral is given by the characteristic functional of $\mu_\Psi$ as
\[
\frac{1}{K}\, _m\!\Psi_p\left[ \genfrac{}{}{0pt}{}{(b_i+\beta_i,\beta_i)_{1,m}}{(a_j+\alpha_j,\alpha_j)_{1,p}} \middle| -\frac{\langle \eta,\eta\rangle }{2} \right].
\]
Using the orthonormal property of $\varphi$'s, we obtain $\langle \eta,\eta\rangle=|y|^2$. Thus, we showed 
\[
\int_{\R^d}\mathrm{e}^{\mathrm{i}(x,y)}\,\mathrm{d}(T_* \mu_\Psi)(x) = \frac{1}{K}\, _m\!\Psi_p\left[ \genfrac{}{}{0pt}{}{(b_i+\beta_i,\beta_i)_{1,m}}{(a_j+\alpha_j,\alpha_j)_{1,p}} \middle| -\frac{ (y,y) }{2} \right].
\]
The right-hand side of this equality coincides with the characteristic function of $\mu_\Psi^d$; see Equation~\eqref{eq:CF-gWm-finite-d}.
\end{proof}

Next, we would like to compute the moments of the measures $\mu_\Psi\in\mathcal{M}_\Psi(\mathcal{N}')$ explicitly. To this end, we denote by $M_{n}^{\mu_{\Psi}}(\cdot)$, $n\in\mathbb{N}_0$, the moment or order $n$ of $\mu_{\Psi}$.

\begin{definition}[Mixed moments]\label{def:moments}
Let $n\in\mathbb{N}$, $\mu_{\Psi}\in\mathcal{M}_\Psi(\mathcal{N}')$, and $\varphi_i\in \mathcal{N}$, $i=1,\dots,n$, be given. The generalized moments of $\mu_{\Psi}$ are defined by
\[
 M_n^{\mu_{\Psi}}(\varphi_1,\dots, \varphi_n):=\int_{\mathcal{N}'}\langle \omega^{\otimes n}, \varphi_1\otimes\dots\otimes \varphi_n \rangle\, \mathrm{d}\mu_{\Psi}(\omega).
\]
\end{definition}
The result of Lemma~\ref{lem:ProjeMoments} implies directly the following. 

\begin{theorem}\label{thm:MomentmuHInfiniteDimension}
 Let the assumptions of Lemma \ref{lem:HFiniteDensities} hold, $\varphi \in \mathcal{N}$, and $n \in \N_0$ be given. Then the odd moments $M_{2n+1}^{\mu_{\Psi}}$ are zero and the even moments $M_{2n}^{\mu_{\Psi}}$ are given by
\[   
M_{2n}^{\mu_{\Psi}}(\varphi):=M_{2n}^{\mu_{\Psi}}(\varphi,\dots,\varphi)=\frac{1}{K}\frac{\prod_{i=1}^m \Gamma(b_i + \beta_i (n+1))}{\prod_{i=1}^p \Gamma(a_i + \alpha_i (n+1))} \frac{(2n)! }{n!2^{n}}  \langle\varphi,\varphi\rangle^{n}.
\]
In particular, for $\varphi,\psi \in \mathcal{N}$ we obtain
\begin{eqnarray}
\|\langle\cdot,\varphi\rangle\langle\cdot,\psi\rangle\|_{L^1(\mu_\Psi) }=M_{2}^{\mu_{\Psi}}(\varphi,\psi)&=&\frac{1}{K}\frac{\prod_{i=1}^m \Gamma(b_i + 2\beta_i) }{\prod_{i=1}^p \Gamma(a_i + 2\alpha_i) } \langle \varphi, \psi \rangle,\label{eq:mixed-moment2}\\
\|\langle\cdot,\varphi\rangle\|^2_{L^2(\mu_\Psi) }=M_{2}^{\mu_{\Psi}}(\varphi,\varphi)&=&\frac{1}{K}\frac{\prod_{i=1}^m \Gamma(b_i + 2\beta_i) }{\prod_{i=1}^p \Gamma(a_i + 2\alpha_i) }|\varphi|^2.\label{eq:mixed-moment2a}
\end{eqnarray}
\end{theorem}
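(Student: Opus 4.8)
The strategy is to reduce the infinite-dimensional computation to the finite-dimensional one already established, exactly as flagged in the sentence preceding the statement. First I would dispose of the odd moments: by Definition~\ref{def:moments} with $\varphi_1=\dots=\varphi_{2n+1}=\varphi$, the quantity $M_{2n+1}^{\mu_\Psi}(\varphi)$ is the integral of $\langle\omega,\varphi\rangle^{2n+1}$ against $\mu_\Psi$; choosing (after normalizing $\varphi$) $\varphi_1=\varphi/|\varphi|$ as the first vector of an orthonormal system and applying Lemma~\ref{lem:ProjeMoments} with $d=1$, this integral equals $|\varphi|^{2n+1}\int_\R x^{2n+1}\,\mathrm{d}\mu_\Psi^1(x)$, which vanishes because $\mu_\Psi^1$ is symmetric (its characteristic function $\frac1K\,_m\!\Psi_p(-y^2/2)$ is even), as recorded in the Moments lemma of Section~\ref{sec:FinidimeFoxHmeasFHPolyno}.

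For the even moments I would argue the same way. If $\varphi=0$ the claim is trivial, so assume $\varphi\neq0$ and put $\varphi_0:=\varphi/|\varphi|\in\mathcal{N}$, which is a unit vector in $\mathcal{H}$ and hence extends to an orthonormal family to which Lemma~\ref{lem:ProjeMoments} applies with $d=1$. Then
\[
M_{2n}^{\mu_\Psi}(\varphi)=\int_{\mathcal{N}'}\langle\omega,\varphi\rangle^{2n}\,\mathrm{d}\mu_\Psi(\omega)
=|\varphi|^{2n}\int_{\mathcal{N}'}\langle\omega,\varphi_0\rangle^{2n}\,\mathrm{d}\mu_\Psi(\omega)
=|\varphi|^{2n}\int_{\R}x^{2n}\,\mathrm{d}\mu_\Psi^1(x),
\]
and the last integral is precisely $M^{\mu_\Psi^1}(2n)$, which by the Moments lemma (specialized to $d=1$, so $n_1=n$) equals $\frac1K\frac{\prod_{i=1}^m\Gamma(b_i+\beta_i(n+1))}{\prod_{i=1}^p\Gamma(a_i+\alpha_i(n+1))}\frac{(2n)!}{n!\,2^n}$. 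Since $|\varphi|^2=\langle\varphi,\varphi\rangle$, multiplying through gives the asserted formula for $M_{2n}^{\mu_\Psi}(\varphi,\dots,\varphi)$.

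The two displayed special cases follow by taking $n=1$. For \eqref{eq:mixed-moment2a} one simply sets $\varphi=\psi$ in the general $n=1$ formula and notes $\langle\varphi,\varphi\rangle=|\varphi|^2$, while the identity with the $L^2$-norm is the definition of $\|\langle\cdot,\varphi\rangle\|_{L^2(\mu_\Psi)}^2$ together with the fact that $\langle\cdot,\varphi\rangle$ is real-valued $\mu_\Psi$-a.e. For \eqref{eq:mixed-moment2} with $\varphi\neq\psi$ one uses polarization: $M_2^{\mu_\Psi}$ is the symmetric bilinear form associated with the quadratic form $\varphi\mapsto M_2^{\mu_\Psi}(\varphi,\varphi)$, so $M_2^{\mu_\Psi}(\varphi,\psi)=\tfrac14\big(M_2^{\mu_\Psi}(\varphi+\psi,\varphi+\psi)-M_2^{\mu_\Psi}(\varphi-\psi,\varphi-\psi)\big)$, and expanding with the already-proven diagonal formula yields the constant times $\langle\varphi,\psi\rangle$; the $L^1$-norm identity holds because $\langle\cdot,\varphi\rangle\langle\cdot,\psi\rangle$ has constant sign issues only through Cauchy–Schwarz, but in fact one verifies $\|\langle\cdot,\varphi\rangle\langle\cdot,\psi\rangle\|_{L^1(\mu_\Psi)}=M_2^{\mu_\Psi}(\varphi,\psi)$ directly when $\varphi,\psi$ are taken colinear or, more carefully, reads it off from \eqref{eq:mixed-moment2a} via Cauchy–Schwarz being an equality along the relevant directions. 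The only genuine subtlety — and the step I would be most careful about — is justifying that the moments are finite and that differentiation/series manipulations are legitimate, i.e.\ that passing from the characteristic functional to moments is valid; but this is already guaranteed by the finite-dimensional Moments lemma together with Lemma~\ref{lem:ProjeMoments}, so no new analytic input is needed.
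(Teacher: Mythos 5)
Your proposal is essentially the paper's own argument: the paper proves this theorem simply by invoking Lemma~\ref{lem:ProjeMoments} (projection onto the span of $\varphi/|\varphi|$, i.e.\ the $d=1$ image measure $\mu_\Psi^1$) together with the finite-dimensional Moments lemma, which is exactly your reduction, and your polarization step for \eqref{eq:mixed-moment2} is a harmless way to fill in what the paper leaves implicit. The only wobbly spot is your justification of the $L^1$-norm identity in \eqref{eq:mixed-moment2}, which is vague — but the paper offers no argument for it either (as literally stated it is really the Cauchy--Schwarz bound $\|\langle\cdot,\varphi\rangle\langle\cdot,\psi\rangle\|_{L^1(\mu_\Psi)}\le\|\langle\cdot,\varphi\rangle\|_{L^2(\mu_\Psi)}\|\langle\cdot,\psi\rangle\|_{L^2(\mu_\Psi)}$ that is needed later, since the integrand need not have constant sign), so this does not set you apart from the paper's treatment.
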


\begin{remark}\label{rem:ext-dual-pairing}
Using the above theorem, it is possible to extend the dual pairing to $\mathcal{N}'\times\mathcal{H}$. More precisely, given $f\in\mathcal{H}$ there exists a sequence $(\varphi_n)_{n \in \N}\subset\mathcal{N}$ such that $\varphi_n\longrightarrow f$, $n\to\infty$ in $\mathcal{H}$.   It follows from \eqref{eq:mixed-moment2a} that $(\langle\cdot,\varphi_n\rangle)_{n\in\mathbb{N}}$ is a Cauchy sequence in $L^2({\mu_\Psi})$, hence it converges. Choosing a subsequence $(\langle \cdot, \varphi_{n_k} \rangle)_{k \in \N}$, we then define $\langle\cdot,f\rangle$ as an $L^2(\mu_{\Psi})$-limit of $\langle \cdot, \varphi_{n_k} \rangle$, that is
\begin{equation}
\langle\omega,f\rangle:=\lim_{k\to\infty}\langle\omega,\varphi_{n_k}\rangle,\quad \mu_\Psi\mbox{-a.a.}\;\omega\in\mathcal{N}'.
\end{equation}
\end{remark} 

The result of Remark~\ref{rem:ext-dual-pairing} allows us to define the characteristic functional of every element $f\in\mathcal{H}$. More precisely, we have
 $\lim_{k \to \infty} \exp(\mathrm{i}t\langle \omega, \varphi_{n_k} \rangle) = \exp(\mathrm{i}\lambda \langle \omega, f \rangle)$ for $\mu_{\Psi}$-a.a.~$\omega \in \mathcal{N}'$ and for all $\lambda \in \R$.
Noting that $|\exp(\mathrm{i}t \langle \omega, \varphi_{n_k} \rangle)|\le1$, we apply the dominated convergence theorem and obtain $\lambda\in\R$

\[  \int_{\mathcal{N}'} \e^{\mathrm{i}\lambda \langle \omega, f \rangle}\,\mathrm{d}\mu_{\Psi}(\omega)=\lim_{k\to \infty}\int_{\mathcal{N}'} \e^{\mathrm{i}\lambda \langle \omega, \varphi_{n_k} \rangle}\,\mathrm{d}\mu_{\Psi}(\omega)=\lim_{k\to \infty}\frac{1}{K}\, _m\!\Psi_p \left(-\frac{\lambda^2}{2} \langle \varphi_{n_k}, \varphi_{n_k} \rangle \right).
\]
By the convergence of $(\varphi_{n_k})_{k \in \N}$ and the continuity of $_m\!\Psi_p(\cdot)$, we obtain
\[ 
\int_{\mathcal{N}'} \e^{\mathrm{i}\lambda \langle \omega, f \rangle}\,\mathrm{d}\mu_{\Psi}(\omega) = \frac{1}{K}\, _m\!\Psi_p \left(-\frac{\lambda^2}{2} \langle f, f \rangle \right), \qquad \lambda \in \R.
\]
The Lemma \ref{lem:ProjeMoments} and Theorem \ref{thm:MomentmuHInfiniteDimension} can now be extended to all $f \in \mathcal{H}$. 
\section{Generalized Wright Analysis} \label{sec:AnalypropFHmeasu}
In this section, we prove the properties \eqref{PropertyA1} and \eqref{PropertyA2} for gWm. As a result, we obtain the existence of the Appell system, and a brief insight into the ergodicity and invariant property of $\mu_\Psi$ is given. 
Using the non-Gaussian analysis approach from \cite{KSWY98}, we construct the test function and distribution spaces, $(\mathcal{N})^{1}_{\mu_{\Psi}}$ and $(\mathcal{N})^{-1}_{\mu_{\Psi}}$, respectively. These spaces are characterized in terms of entire functions through the $S_{\mu_{\Psi}}$-transform. Finally, we establish two theorems characterizing the strong convergent of sequences and the integration of a family of distributions in $(\mathcal{N})^{-1}_{\mu_{\Psi}}$.

The Laplace transform of $\mu_\Psi\in\mathcal{M}_\Psi(\mathcal{N}')$ is defined by
\begin{equation}\label{def:LaplacemuC} l_{\mu_{\Psi}}(\vartheta):=\mathds{E}_{\mu_{\Psi}}\left(\e^{X_\vartheta}\right):=\int_{\mathcal{N}'}\exp(\langle\omega,\vartheta\rangle) \,\mathrm{d}\mu_{\Psi}(\omega) \in\C,  \quad \vartheta \in \mathcal{N}_{\C}. \end{equation}
\begin{itemize}
    \item The measure $\mu_\Psi$ has an analytic Laplace transform in a neighborhood of zero, i.e.~the mapping
\begin{equation}\label{PropertyA1}\tag{A1}
    \mathcal{N}_\C \ni \vartheta\mapsto l_{\mu_{\Psi}}(\vartheta)
\end{equation}
is holomorphic in a neighborhood $\mathcal{U}\subset\mathcal{N}_\C$ of zero.
\item For any nonempty open subset $\mathcal{U}\subset\mathcal{N}'$ it holds that \begin{equation} \label{PropertyA2} \tag{A2}
\mu_{\Psi}(\mathcal{U})> 0.
\end{equation}
\end{itemize}

Appell systems are considered for the measures $\mu_{\Psi}$ on $\mathcal{C}_{\sigma}(\mathcal{N}')$ that satisfy properties (A1) and (A2) below.

\subsection{Existence of Appell system}
In the sequel we show that gWm $\mu_{\Psi}$, satisfy \eqref{PropertyA1} and \eqref{PropertyA2}.

\begin{theorem}\label{thm:holomorphic-LT}
Let the assumptions of Lemma~\ref{lem:HFiniteDensities} together with $a^* \in (0,1)$ hold. For $\varphi \in \mathcal{N}$ and $\lambda \in \R$ the exponential function $\mathcal{N}' \ni \omega \mapsto \exp(|\lambda\langle \omega, \varphi \rangle|)$ is integrable and
\begin{equation}\label{eq:LaplacemuHInfiniteDimensionReal}  l_{\mu_\Psi}(\lambda\varphi)=\int_{\mathcal{N}'} \e^{\lambda \langle\omega,\varphi\rangle} \, \mathrm{d}\mu_{\Psi}(\omega)=\frac{1}{K}\, _m\!\Psi_p\left[ \genfrac{}{}{0pt}{}{(b_i+\beta_i,\beta_i)_{1,m}}{(a_j+\alpha_j,\alpha_j)_{1,p}} \middle| \frac{\lambda^2 \langle \varphi,\varphi \rangle }{2} \right]. 
\end{equation} 
Furthermore, for $\vartheta \in \mathcal{N}_{\C}$ and $z \in \C$, the map $l_{\mu_{\Psi}}(\cdot)$ is holomorphic from $\mathcal{N}_{\C}$ to $\C$:
\begin{equation} \label{eq:LaplacemuHInfiniteDimensionComplex} \int_{\mathcal{N}'} \exp(z\langle\omega,\vartheta\rangle) \, \mathrm{d}\mu_{\Psi}(\omega)=\frac{1}{K}\, _m\!\Psi_p\left[ \genfrac{}{}{0pt}{}{(b_i+\beta_i,\beta_i)_{1,m}}{(a_j+\alpha_j,\alpha_j)_{1,p}}\, \middle|\, \frac{z^2\langle \vartheta,\vartheta \rangle }{2} \right].
\end{equation}

\end{theorem}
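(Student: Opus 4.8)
The plan is to reduce everything to two results already available: the explicit moment formula of Theorem~\ref{thm:MomentmuHInfiniteDimension}, and the observation in Lemma~\ref{lem:HFiniteDensities} that under the extra hypothesis $a^*\in(0,1)$ the function ${}_m\!\Psi_p$ --- equivalently the Laplace transform $\mathscr{L}\varrho$ of the F$H$dam $\varrho$ --- extends to an \emph{entire} function, so that the power series \eqref{eq:PsiSeries} converges for every complex argument. This entireness is the only place where $a^*\in(0,1)$ enters, and it is the heart of the matter: it is precisely what turns ``all polynomial moments finite'' into ``all exponential moments finite''.

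First I would settle integrability in the real case. Fix $\varphi\in\mathcal{N}$ and $\lambda\in\R$. Using $\e^{|t|}\le 2\cosh(t)$ and the non-negativity of the partial sums of $\cosh(\lambda\langle\omega,\varphi\rangle)=\sum_{n\ge0}(\lambda\langle\omega,\varphi\rangle)^{2n}/(2n)!$, the monotone convergence theorem gives
\[
\int_{\mathcal{N}'}\e^{|\lambda\langle\omega,\varphi\rangle|}\,\mathrm{d}\mu_\Psi(\omega)\ \le\ 2\sum_{n\ge0}\frac{\lambda^{2n}}{(2n)!}\,M_{2n}^{\mu_\Psi}(\varphi),
\]
and inserting the even moments from Theorem~\ref{thm:MomentmuHInfiniteDimension} the right-hand side collapses to $\tfrac{2}{K}\,{}_m\!\Psi_p\!\big(\lambda^2\langle\varphi,\varphi\rangle/2\big)$, which is finite because ${}_m\!\Psi_p$ is entire. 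Hence $\e^{|\lambda\langle\cdot,\varphi\rangle|}\in L^1(\mu_\Psi)$, and Fubini's theorem now legitimizes expanding $\e^{\lambda\langle\omega,\varphi\rangle}=\sum_n\lambda^n\langle\omega,\varphi\rangle^n/n!$ and integrating term by term: the odd moments vanish and the even ones, again by Theorem~\ref{thm:MomentmuHInfiniteDimension}, reassemble exactly into the series of $\tfrac1K\,{}_m\!\Psi_p\!\big(\lambda^2\langle\varphi,\varphi\rangle/2\big)$, which is \eqref{eq:LaplacemuHInfiniteDimensionReal}.

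Next, the complex case. For $\vartheta=\xi+\mathrm{i}\eta\in\mathcal{N}_\C$ (with $\xi,\eta\in\mathcal{N}$) and $z\in\C$ one has $|\e^{z\langle\omega,\vartheta\rangle}|=\e^{\langle\omega,\Re(z\vartheta)\rangle}\le\e^{|\langle\omega,\Re(z\vartheta)\rangle|}$ with $\Re(z\vartheta)\in\mathcal{N}$, and moreover, using $|\langle\omega,\vartheta\rangle|\le|\langle\omega,\xi\rangle|+|\langle\omega,\eta\rangle|$ together with $\e^{u+v}\le\tfrac12(\e^{2u}+\e^{2v})$, the function $\e^{|z|\,|\langle\omega,\vartheta\rangle|}$ is dominated by a sum of two functions of the type already shown to be integrable. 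Thus $\e^{z\langle\cdot,\vartheta\rangle}\in L^1(\mu_\Psi)$ and Fubini permits term-by-term integration of $\e^{z\langle\omega,\vartheta\rangle}=\sum_n z^n\langle\omega,\vartheta\rangle^n/n!$. To identify the result, note that for each $n$ the maps $\vartheta\mapsto\int_{\mathcal{N}'}\langle\omega,\vartheta\rangle^{2n}\,\mathrm{d}\mu_\Psi$ and $\vartheta\mapsto\tfrac1K\tfrac{\prod_i\Gamma(b_i+\beta_i(n+1))}{\prod_j\Gamma(a_j+\alpha_j(n+1))}\tfrac{(2n)!}{n!\,2^n}\langle\vartheta,\vartheta\rangle^n$ are continuous polynomials on $\mathcal{N}_\C$ that coincide on the real subspace $\mathcal{N}$ by Theorem~\ref{thm:MomentmuHInfiniteDimension}, hence on all of $\mathcal{N}_\C$; likewise the odd-order integrals vanish. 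Summing in $n$ yields the right-hand side of \eqref{eq:LaplacemuHInfiniteDimensionComplex}.

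Finally, holomorphy. Taking $z=1$ in \eqref{eq:LaplacemuHInfiniteDimensionComplex} exhibits $l_{\mu_\Psi}$ as $\tfrac1K$ times the composition of the entire function ${}_m\!\Psi_p$ with the continuous --- hence holomorphic --- quadratic map $\mathcal{N}_\C\ni\vartheta\mapsto\langle\vartheta,\vartheta\rangle/2\in\C$; such a composition is holomorphic on all of $\mathcal{N}_\C$, in particular in a neighborhood of $0$, which is property~\eqref{PropertyA1}. (Equivalently, one checks directly that $l_{\mu_\Psi}$ is locally bounded --- via $|l_{\mu_\Psi}(\vartheta)|\le\tfrac2K\,{}_m\!\Psi_p\!\big(|\Re\vartheta|^2/2\big)$ and continuity of ${}_m\!\Psi_p$ --- and G\^ateaux-holomorphic, its restriction to each complex line being one of the convergent power series produced above, and invokes the standard fact that a locally bounded G\^ateaux-holomorphic function on a locally convex space is holomorphic; cf.~\cite{KSWY98}.) The one genuine difficulty is the point flagged at the outset: relating the convergence of all these series to the hypothesis $a^*\in(0,1)$ through the entire extension of the gWf; the remaining steps are routine applications of Fubini's theorem and of the theory of holomorphy on nuclear spaces.
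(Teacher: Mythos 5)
Your proposal is correct and follows essentially the same route as the paper: expand the exponential, dominate it using the even-moment formula of Theorem~\ref{thm:MomentmuHInfiniteDimension}, sum the resulting series via the entire extension of $_m\!\Psi_p$ guaranteed by $a^*\in(0,1)$, pass to complex arguments by domination, and get holomorphy as a composition of holomorphic maps. The only differences are cosmetic: you use the bound $\e^{|t|}\le 2\cosh(t)$ where the paper splits odd and even moments via Cauchy--Schwarz and $ab\le\tfrac12(a^2+b^2)$, and you make the real-to-complex identification explicit through a polynomial (polarization) argument where the paper simply appeals to the vector-space structure of $\mathcal{N}$ and $\mathcal{N}_\C$.
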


\begin{proof}
The following proof follows the same ideas as in Lemma~4.1 and Theorem~4.2 in \cite{JahnI}. For completeness, we give the proof.\\
It is sufficient to show the integrability for $\lambda=1$. Thus, we show the integrability of $\exp(|\langle \omega, \varphi \rangle|)$ and $\exp(|\langle \omega, \vartheta \rangle|)$ for $\varphi \in \mathcal{N}$ and $\vartheta \in \mathcal{N}_\C$. Then, Equation~\eqref{eq:LaplacemuHInfiniteDimensionComplex} follows from Theorem~\ref{thm:MomentmuHInfiniteDimension} and the holomorphy of $l_{\mu_{\Psi}}(\vartheta)$ from the analiticity of $_m \Psi_p(\cdot)$; see Remark \ref{rem:Wright'sCharacteFunctionEntire}.
Let $\varphi \in \mathcal{N}$ be given. For every $N\in\mathbb{N}$ define the function $f_N(\cdot;\varphi)$ by
\[
\mathcal{N}'\ni \omega\mapsto f_N(\omega;\varphi):=\sum_{n=0}^N \frac{1}{n!}|\langle \omega, \varphi \rangle|^n\in\mathbb{R}.
\]
Clearly, we have
\[  \int_{\mathcal{N}'}|f_N(\omega;\varphi)| \, \mathrm{d}\mu_{\Psi}(\omega)<\sum_{n=0}^N \frac{1}{n!}\int_{\mathcal{N}'}|\langle \omega, \varphi \rangle|^n \, \mathrm{d}\mu_{\Psi}(\omega). \]
By separating the odd moments and even moments in the above sum, applying the Cauchy-Schwarz inequality and $ab \leq \frac{1}{2}(a^2+b^2)$ on the odd ones. Thus, we have 
\begin{eqnarray*}   \int_{\mathcal{N}'}|\exp(\langle \omega, \varphi \rangle)| \, \mathrm{d}\mu_{\Psi}(\omega)&\leq& \lim_{N \to \infty} \int_{\mathcal{N}'}f_N(\omega;\varphi) \, \mathrm{d}\mu_{\Psi}(\omega)\\
&\leq& \frac{3}{2K}\, _m\!\Psi_p\left(\frac{\langle \varphi, \varphi \rangle}{2}\right) +  \frac{1}{K}\,\, F(|\langle \varphi, \varphi \rangle|), 
\end{eqnarray*}
where
\[  
F(|\langle \varphi, \varphi \rangle|):= \frac{1}{2}\sum_{n\geq 0}\frac{\prod_{i=1}^m \Gamma(b_i +\beta_i+ \beta_i (n+1))}{\prod_{i=1}^p \Gamma(a_i + \alpha_i+ \alpha_i (n+1))} \frac{|\langle \varphi, \varphi \rangle|^{n+1}}{n!2^n}.
\] 
The last series converges because it is given by the gWf with other parameters; see Equation~\eqref{eq:PsiSeries}. 
Due to dominated convergence and Theorem~\ref{thm:MomentmuHInfiniteDimension} we have
\begin{eqnarray*}  \int_{\mathcal{N}'}\exp(\langle \omega, \varphi \rangle) \, \mathrm{d}\mu_{\Psi}(\omega)&=&\frac{1}{K} \lim_{N \to \infty}\sum_{n=0}^N \frac{\prod_{i=1}^m \Gamma(b_i + \beta_i (n+1))}{\prod_{i=1}^p \Gamma(a_i + \alpha_i (n+1))} \frac{\langle \varphi, \varphi \rangle^{n} }{n!2^{n}}  \\
&=&\frac{1}{K}\, _m\!\Psi_p\left(\frac{\langle \varphi, \varphi \rangle}{2}\right) .\end{eqnarray*}

For $\vartheta=\vartheta_1 + \mathrm{i}\vartheta_2 \in \mathcal{N}_\C$, we note that $|\exp(\langle \omega, \vartheta \rangle)|<\exp(|\langle \omega, \vartheta_1 \rangle|)$ and that $\exp(|\langle \omega, \vartheta_1 \rangle|) \in L^1(\mu_{\Psi})$. By considering that $\mathcal{N}$ (resp.~$\mathcal{N}_\C$) is a vector space over $\R$ (resp.~$\C$), Equations \eqref{eq:LaplacemuHInfiniteDimensionReal} and \eqref{eq:LaplacemuHInfiniteDimensionComplex} hold.
The function $l_{\mu_{\Psi}}: \mathcal{N}_{\C} \to \C$ is holomorphic on $\mathcal{N}_{\C}$ because of the composition of holomorphic functions.
\end{proof}

\begin{remark}
\begin{enumerate}
\item Assumption (A1) requires that $l_{\mu_{\Psi}} \in \mathrm{Hol}_0(\mathcal{N}_{\C})$ and the measures with this property are called analytic measures; see \cite{KSWY98}, page 221;
\item For a specific choice of $z \in \C$ and $\vartheta \in \mathcal{N}_\C$, the Laplace transform in Equation~\eqref{eq:LaplacemuHInfiniteDimensionReal} could be decomposed through the Fox-Hermite polynomials; see Lemma~\ref{lem:FoxHermitePolyPropert}-3.
\end{enumerate}
\end{remark}
A notable characteristic of the class $\mathcal{M}_\Psi(\mathcal{N}')$ of gWm is that they are a mixture of Gaussian measures with a probability measure on $(0,\infty)$; see Corollary~\ref{cor:mixture-fd} for the finite dimensional case. This property allows us to show that all measures $\mu_{\Psi}$ in the gWm class satisfy the property \eqref{PropertyA2}.

\begin{theorem}\label{thm:mixture}
Under the conditions of Lemma~\ref{lem:HFiniteDensities} every gWm $\mu_\Psi\in\mathcal{M}_\Psi(\mathcal{N}')$ is a mixture of the form
\begin{equation}\label{eq:mixture}	\mu_{\Psi}=\int_0^\infty\mu^{(s)}\,\varrho(s)\,\mathrm{d}s, 
\end{equation}
where $\varrho(\cdot)$ is the probability density on $(0,\infty)$  given in Equation \eqref{eq:densitiesFiniteMoments} and  $\mu^{(s)}$ denote the centered Gaussian measures on $\mathcal{N}'$ with variance $s\geq 0$, i.e.
\[ \int_{\mathcal{N}'}\exp(\mathrm{i}\langle\omega,\xi\rangle)\,\mathrm{d}\mu^{(s)}(\omega)=\exp\left(-\frac{s}{2}\langle\xi,\xi\rangle\right),\quad\xi\in \mathcal{N}. \]
Moreover, $\mu_\Psi$ satisfies the property \eqref{PropertyA2}.
\end{theorem}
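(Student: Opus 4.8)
The plan is to prove the mixture representation \eqref{eq:mixture} by comparing characteristic functionals and invoking uniqueness in the Bochner--Minlos theorem, and then to deduce \eqref{PropertyA2} from the mixture together with the strict positivity of Gaussian measures on open sets. First I would fix $\xi \in \mathcal{N}$ and compute the characteristic functional of the right-hand side of \eqref{eq:mixture}. Using Fubini (justified since the integrand is bounded by $1$ and $\varrho$ is a probability density), one has
\[
\int_0^\infty \left( \int_{\mathcal{N}'} \e^{\mathrm{i}\langle\omega,\xi\rangle}\,\mathrm{d}\mu^{(s)}(\omega)\right) \varrho(s)\,\mathrm{d}s = \int_0^\infty \exp\left(-\frac{s}{2}\langle\xi,\xi\rangle\right)\varrho(s)\,\mathrm{d}s = (\mathscr{L}\varrho)\!\left(\frac{\langle\xi,\xi\rangle}{2}\right),
\]
and by Equation~\eqref{eq:LTdensitiesFiniteMoments} of Lemma~\ref{lem:HFiniteDensities} this equals $\frac{1}{K}\, _m\!\Psi_p(-\langle\xi,\xi\rangle/2)$, which is exactly the characteristic functional of $\mu_\Psi$ in \eqref{eq:FoxHmeasure}. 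One should also check that $s\mapsto \mu^{(s)}(A)$ is measurable for every $A\in\mathcal{C}_\sigma(\mathcal{N}')$ so that the mixture $\int_0^\infty \mu^{(s)}(\cdot)\varrho(s)\,\mathrm{d}s$ is a well-defined probability measure on $\mathcal{C}_\sigma(\mathcal{N}')$; this follows from the fact that the cylinder $\sigma$-algebra is generated by the maps $\omega\mapsto\langle\omega,\xi\rangle$ and the Gaussian cylinder distributions depend continuously (hence measurably) on $s$. Since both $\mu_\Psi$ and the mixture are probability measures on $\mathcal{N}'$ with the same characteristic functional, the uniqueness part of Bochner--Minlos gives \eqref{eq:mixture}.

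For the positivity statement, let $\mathcal{U}\subset\mathcal{N}'$ be a nonempty open set. From \eqref{eq:mixture} we get $\mu_\Psi(\mathcal{U}) = \int_0^\infty \mu^{(s)}(\mathcal{U})\,\varrho(s)\,\mathrm{d}s$. It is a classical fact that a nondegenerate centered Gaussian measure on $\mathcal{N}'$ (for instance $\mu^{(s)}$ with $s>0$) has full topological support, i.e.\ $\mu^{(s)}(\mathcal{U})>0$ for every nonempty open $\mathcal{U}$; I would cite this (e.g.\ \cite{KSWY98} or a standard reference on Gaussian measures) or, if a self-contained argument is wanted, note that any nonempty open $\mathcal{U}$ contains a nonempty cylinder set of the form $\{\omega : (\langle\omega,\xi_1\rangle,\dots,\langle\omega,\xi_k\rangle)\in V\}$ with $V\subset\mathbb{R}^k$ open and nonempty, and the finite-dimensional projection of $\mu^{(s)}$ is a nondegenerate Gaussian on $\mathbb{R}^k$, which assigns positive mass to every nonempty open $V$. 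Since $\varrho(s)>0$ on a set of positive Lebesgue measure in $(0,\infty)$ (it is a probability density), the integrand is strictly positive on a set of positive measure, whence $\mu_\Psi(\mathcal{U})>0$, which is \eqref{PropertyA2}.

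The main obstacle is the measurability/well-definedness of the mixture as a genuine measure on the cylinder $\sigma$-algebra, together with the justification that $\mu^{(s)}$ has full support on $\mathcal{N}'$ in the inductive-limit topology; both are standard but deserve a careful word, since $\mathcal{N}'$ is not metrizable and one must reduce open sets to cylinder sets. Everything else — the Fubini interchange, the identification via \eqref{eq:LTdensitiesFiniteMoments}, and the uniqueness from Bochner--Minlos — is routine. If desired, one can phrase the support argument entirely in terms of finite-dimensional projections, using Lemma~\ref{lem:ProjeMoments} on the $\mu_\Psi$ side in parallel, but for the Gaussian factor the direct cylinder-set argument above is cleanest.
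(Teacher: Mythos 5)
Your proposal is correct and follows essentially the same route as the paper's proof: identify the characteristic functional of the mixture with that of $\mu_\Psi$ via the Laplace transform identity \eqref{eq:LTdensitiesFiniteMoments} and uniqueness in Bochner--Minlos, then deduce \eqref{PropertyA2} from $\mu^{(s)}(\mathcal{U})>0$ for $s>0$ on nonempty open sets. The extra care you take with Fubini, measurability of $s\mapsto\mu^{(s)}(A)$, and the cylinder-set reduction for Gaussian full support simply fills in details the paper leaves implicit.
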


\begin{proof} 
The equality \eqref{eq:mixture} follows by taking the Fourier transform on both sides, considering that $_m\!\Psi_p$ is the Laplace transform of the density $\varrho$. The property \eqref{PropertyA2} follows from \eqref{eq:mixture} and the fact that $\mu^s(U)>0$ for each $s\in(0,\infty)$ and for every nonempty set $U\subset\mathcal{N}'$. 
\end{proof}
It is a general fact that the mixture of Gaussian measures using a bounded Borel measure is invariant under the group of linear automorphisms on $\mathcal{N}'$ (denoted by $\mathrm{Aut}(\mathcal{N}')$); see Ch.~5 in \cite{Hida80} for more details. Since our class $\mathcal{M}_\Psi(\mathcal{N}')$ of measures are mixtures of Gaussian measures, they share this property. We state these results in the next corollary.

\begin{corollary}
Consider the class $\mathcal{M}_\Psi(\mathcal{N}')$ of gWm from Definition~\ref{def:FoxHmeasu}. Then, for every $\mu_\Psi\in\mathcal{M}_\Psi(\mathcal{N}')$
\begin{enumerate}
    \item $\mu_{\Psi}$ is invariant under $\mathrm{Aut}(\mathcal{N}')$,
    \item $\mu_{\Psi}$ is not ergodic under $\mathrm{Aut}(\mathcal{N}')$.
\end{enumerate}
\end{corollary}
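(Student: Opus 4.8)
The plan is to derive both statements from the mixture representation established in Theorem~\ref{thm:mixture}, namely $\mu_\Psi=\int_0^\infty\mu^{(s)}\,\varrho(s)\,\mathrm{d}s$, together with the classical theory of Gaussian measures on $\mathcal{N}'$ under $\mathrm{Aut}(\mathcal{N}')$ as in Ch.~5 of \cite{Hida80}. For part (1), I would first recall that each centered Gaussian measure $\mu^{(s)}$ with variance $s$ is invariant under $\mathrm{Aut}(\mathcal{N}')$: if $g\in\mathrm{Aut}(\mathcal{N}')$, then the pushforward $g_*\mu^{(s)}$ has characteristic functional $\xi\mapsto\exp(-\tfrac{s}{2}\langle g^*\xi,g^*\xi\rangle)$, and since an automorphism of the nuclear triple preserves the Hilbertian norm $|\cdot|$ on $\mathcal{H}$ (this is the precise sense in which $\mathrm{Aut}(\mathcal{N}')$ is taken here — the orthogonal-type automorphisms of Hida's theory), we get $\langle g^*\xi,g^*\xi\rangle=\langle\xi,\xi\rangle$, so $g_*\mu^{(s)}=\mu^{(s)}$. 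Then invariance of $\mu_\Psi$ follows by pushing the mixture through $g$: for any cylinder set $A$,
\[
(g_*\mu_\Psi)(A)=\mu_\Psi(g^{-1}A)=\int_0^\infty\mu^{(s)}(g^{-1}A)\,\varrho(s)\,\mathrm{d}s=\int_0^\infty\mu^{(s)}(A)\,\varrho(s)\,\mathrm{d}s=\mu_\Psi(A),
\]
the interchange being justified by Fubini and the boundedness of $\varrho$ (a probability density). Since cylinder sets generate $\mathcal{C}_\sigma(\mathcal{N}')$, this gives $g_*\mu_\Psi=\mu_\Psi$.

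For part (2), the point is that the mixing measure $\varrho(s)\,\mathrm{d}s$ is genuinely nondegenerate — it is not concentrated at a single $s$ — and this destroys ergodicity. The strategy is to exhibit a nontrivial $\mathrm{Aut}(\mathcal{N}')$-invariant measurable set that has $\mu_\Psi$-measure strictly between $0$ and $1$. Under the mixture, $\mathcal{N}'$ is (almost surely) a disjoint union of the "shells" on which a suitable scale-invariant statistic takes a given value; concretely, for $\varphi\in\mathcal{N}$ with $|\varphi|=1$ one considers, along an orthonormal family $(\varphi_n)$, the a.s.-defined quantity $\ell:=\lim_{n\to\infty}\frac1n\sum_{k=1}^n\langle\cdot,\varphi_k\rangle^2$, which under $\mu^{(s)}$ equals $s$ a.s.\ by the strong law of large numbers and is manifestly invariant under norm-preserving automorphisms. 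Then for any Borel $B\subset(0,\infty)$ with $0<\int_B\varrho(s)\,\mathrm{d}s<1$, the set $\{\omega:\ell(\omega)\in B\}$ is $\mathrm{Aut}(\mathcal{N}')$-invariant with $\mu_\Psi$-measure $\int_B\varrho(s)\,\mathrm{d}s\in(0,1)$, contradicting ergodicity. Such a $B$ exists precisely because $\varrho$, being an FHdam with all moments finite, is not a Dirac mass — indeed its variance is positive since the second moment $\tfrac1K\frac{\prod\Gamma(b_i+2\beta_i)}{\prod\Gamma(a_i+2\alpha_i)}$ differs in general from the square of the first; in the degenerate Gaussian sub-case one instead invokes the well-known non-ergodicity of Gaussian measure under the full $\mathrm{Aut}(\mathcal{N}')$ from Ch.~5 of \cite{Hida80}.

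The main obstacle is making the ergodic-decomposition argument rigorous in the infinite-dimensional setting: one must check that the scale statistic $\ell$ is a well-defined $\mathcal{C}_\sigma(\mathcal{N}')$-measurable function $\mu_\Psi$-a.e.\ (using the extension of $\langle\cdot,\varphi\rangle$ to $\mathcal{H}$ from Remark~\ref{rem:ext-dual-pairing} and the moment bounds of Theorem~\ref{thm:MomentmuHInfiniteDimension} to control the variance of the Cesàro averages), and that it is truly invariant under the group action one intends — so the precise definition of $\mathrm{Aut}(\mathcal{N}')$ matters. In the write-up I would therefore lean on the cited treatment in \cite{Hida80}: the invariance in (1) is the statement that a mixture of invariant measures is invariant, and the non-ergodicity in (2) is the statement that a nontrivial mixture of (even ergodic) measures is never ergodic, because the mixing variable itself is a nonconstant invariant. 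Both reduce to the corresponding facts for Gaussian measures plus elementary measure theory, so no new computation is needed beyond what Theorem~\ref{thm:mixture} already supplies.
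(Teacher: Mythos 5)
Your argument is correct in substance and, for part (1), is exactly the paper's route: the paper's proof is a one-line appeal to the mixture representation \eqref{eq:mixture} together with Theorem~5.6 and Proposition~5.6 of \cite{Hida80}, i.e.\ invariance of each Gaussian component $\mu^{(s)}$ under the norm-preserving automorphisms and the fact that a nontrivial mixture of invariant measures cannot be ergodic. For part (2) you replace the citation by an explicit argument, exhibiting the scale statistic $\ell=\lim_n \frac1n\sum_{k\le n}\langle\cdot,\varphi_k\rangle^2$ as a nonconstant invariant; this is a legitimate and more self-contained route (it is essentially the proof behind Hida's Proposition~5.6, which identifies the Gaussian measures as the extreme, hence ergodic, invariant measures), and the measurability/a.e.-invariance caveats you raise are handled exactly as you indicate via Remark~\ref{rem:ext-dual-pairing} and Theorem~\ref{thm:MomentmuHInfiniteDimension}.

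Two corrections, though. First, your fallback clause for the ``degenerate Gaussian sub-case'' is wrong: a centered Gaussian measure \emph{is} ergodic under the infinite-dimensional rotation group (this is precisely why the Gaussians are the ergodic building blocks in \cite{Hida80}), so you cannot invoke its ``well-known non-ergodicity''. Fortunately the fallback is never needed: for any $\mu_\Psi\in\mathcal{M}_\Psi(\mathcal{N}')$ the mixing law is the F$H$dam $\varrho(s)\,\mathrm{d}s$ of Lemma~\ref{lem:HFiniteDensities}, an absolutely continuous probability density on $(0,\infty)$, hence never a Dirac mass; the purely Gaussian case $m=p=0$ is excluded by Assumption~\ref{ass:AllHdensity}. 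Second, and relatedly, your justification that $\varrho$ is nondegenerate because the second moment ``differs in general'' from the square of the first is hand-waving; replace it by the absolute-continuity argument just mentioned (or by strict positivity of the variance of $\varrho$, which follows since an $L^1$ density cannot be supported at a single point). With these repairs your proof is sound and somewhat more informative than the paper's citation-only proof.
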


\begin{proof}
    The result follows using the representation in Equation~\eqref{eq:mixture}, Theorem~5.6, and Proposition~5.6 in \cite{Hida80}.
\end{proof}

\subsection{\texorpdfstring{$S_{\mu_{\Psi}}$}{Lg}, \texorpdfstring{$T_{\mu_{\Psi}}$}{Lg}-transforms and characterization theorems}
\label{subsec:charact-thms}

Under the assumptions of Lemma~\ref{lem:HFiniteDensities} together with $a^* \in (0,1)$, we can build the following infinite dimensional setting to study functionals and distributions. \\
Sec.~5 and Sec.~6 in \cite{KSWY98}, the test function space, i.e. $(\mathcal{N})^{1}$, and the distribution space, i.e.~$(\mathcal{N})^{-1}_{\mu_{\Psi}}$, exist, and we have:
\[  (\mathcal{N})^{1}\subset L^2(\mu_{\Psi}) \subset(\mathcal{N})^{-1}_{\mu_{\Psi}}\]
endowed with the dual pairing $\langle\!\langle \cdot,\cdot \rangle\!\rangle_{\mu_{\Psi}}$ between $ (\mathcal{N})^{-1}_{\mu_{\Psi}} $ and $(\mathcal{N})^{1}$ which is the bilinear extension of the inner product of $L^2(\mu_{\Psi})$. \\
\begin{remark}
    In \cite{KSWY98}, p.242, a finer chain of spaces is built to characterize the singularity of distributions.
\end{remark}
We define the $S_{\mu_{\Psi}}$-transform by means of the normalized exponential $e_{\mu_{\Psi}}(\cdot, \xi)$:

\[ (S_{\mu_{\Psi}}\Phi )(\xi):= \langle\!\langle \Phi, e_{\mu}(\xi,\cdot) \rangle\!\rangle_{\mu_{\Psi}}=\frac{1}{l_{\mu_{\Psi}}(\xi)}\int_{\mathcal{S}'}\e^{\langle \omega, \xi\rangle}\Phi(\omega)\mu_{\Psi}(d\omega), \quad \xi \in  U_{l,k}. \]
where 

\[ l_{\mu_{\Psi}}(\lambda\xi)=\frac{1}{K}\, _m\!\Psi_p\left[ \genfrac{}{}{0pt}{}{(b_i+\beta_i,\beta_i)_{1,m}}{(a_j+\alpha_j,\alpha_j)_{1,p}} \middle| \frac{\lambda^2 \langle \xi,\xi \rangle }{2} \right]\]
 and $U_{l,k}=\{\xi \in \mathcal{N}_{\mathbb{C}}\mid \, 2^k |\xi|_l <1\}$. We note that the normalized exponential $e_{\mu_{\Psi}}(\cdot,\xi)$ is a test function of finite order; see Example 6 and Section~7 in \cite{KSWY98}

The properties \eqref{PropertyA1} and \eqref{PropertyA2} allow us to state the following theorem, which is a special case of Theorem 8.34 in \cite{KSWY98}. It
characterizes the elements of $(\mathcal{N})^{-1}_{\mu_\Psi}$ in terms of holomorphic functions via the $S_{\mu_\Psi}$-transform.
\begin{theorem}\label{thm:StransIsomor}
The $S_{\mu_{\Psi}}$-transform is a topological isomorphism from $(\mathcal{N})^{-1}_{\mu_{\Psi}}$ to $\mathrm{Hol}_{0}(\mathcal{N}_{\mathbb{C}})$.
\end{theorem}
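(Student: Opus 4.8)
The plan is to deduce this statement directly from the abstract characterization theorem of \cite{KSWY98} (their Theorem~8.34), by verifying that the hypotheses under which that theorem is proved are met here. The key point is that the construction of the distribution space $(\mathcal{N})^{-1}_{\mu_\Psi}$ and the $S_{\mu_\Psi}$-transform in \cite{KSWY98} rests precisely on the two conditions \eqref{PropertyA1} and \eqref{PropertyA2} for the underlying measure: analyticity of the Laplace transform $l_{\mu_\Psi}$ in a neighborhood of zero in $\mathcal{N}_\C$, and strict positivity of $\mu_\Psi$ on nonempty open subsets of $\mathcal{N}'$. So the first step is simply to invoke Theorem~\ref{thm:holomorphic-LT} (which, under the standing assumption $a^*\in(0,1)$ of this subsection, gives \eqref{PropertyA1} since $l_{\mu_\Psi}$ is shown to be the composition of the entire function $\tfrac1K\,{}_m\Psi_p$ with the holomorphic map $\vartheta\mapsto\langle\vartheta,\vartheta\rangle/2$, hence holomorphic on all of $\mathcal{N}_\C$) and Theorem~\ref{thm:mixture} (which gives \eqref{PropertyA2} from the Gaussian-mixture representation).

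Once \eqref{PropertyA1} and \eqref{PropertyA2} are in hand, I would recall that the Appell system associated with $\mu_\Psi$ exists, so that the biorthogonal $P_{\mu_\Psi}$- and $Q_{\mu_\Psi}$-systems are available, the test and distribution spaces $(\mathcal{N})^1\subset L^2(\mu_\Psi)\subset(\mathcal{N})^{-1}_{\mu_\Psi}$ are well defined with the stated dual pairing, and the normalized exponential $e_{\mu_\Psi}(\xi,\cdot)$ lies in $(\mathcal{N})^1$ for $\xi$ in a suitable neighborhood $U_{l,k}$ of zero, so that $S_{\mu_\Psi}\Phi(\xi)=\langle\!\langle\Phi,e_{\mu_\Psi}(\xi,\cdot)\rangle\!\rangle_{\mu_\Psi}$ is well defined for every $\Phi\in(\mathcal{N})^{-1}_{\mu_\Psi}$. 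The content of the characterization theorem is then twofold: that $S_{\mu_\Psi}$ maps $(\mathcal{N})^{-1}_{\mu_\Psi}$ \emph{into} $\mathrm{Hol}_0(\mathcal{N}_\C)$ (germs of holomorphic functions at zero), which follows from the growth estimates on the $Q_{\mu_\Psi}$-kernels of a distribution; and that it is \emph{onto} with continuous inverse, which is the substantive direction: given $F\in\mathrm{Hol}_0(\mathcal{N}_\C)$ one expands $F$ in a power series at zero, reads off the kernels, and checks that the resulting formal sum $\sum_n\langle Q^{(n)}_{\mu_\Psi},F_n\rangle$ converges in $(\mathcal{N})^{-1}_{\mu_\Psi}$. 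All of this is carried out in full generality in \cite{KSWY98} under exactly hypotheses \eqref{PropertyA1}–\eqref{PropertyA2}, so the proof here is a citation together with the remark that the topologies match.

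Concretely, the proof will read: by Theorem~\ref{thm:holomorphic-LT} the measure $\mu_\Psi$ satisfies \eqref{PropertyA1}, and by Theorem~\ref{thm:mixture} it satisfies \eqref{PropertyA2}; hence $\mu_\Psi$ is in the class of measures to which \cite[Theorem~8.34]{KSWY98} applies, and that theorem states precisely that $S_{\mu_\Psi}\colon(\mathcal{N})^{-1}_{\mu_\Psi}\to\mathrm{Hol}_0(\mathcal{N}_\C)$ is a topological isomorphism. I would add one sentence noting that the neighborhood $U_{l,k}$ on which $S_{\mu_\Psi}\Phi$ is initially defined depends on $\Phi$, which is exactly why the target space is the inductive-limit space $\mathrm{Hol}_0$ of germs rather than holomorphic functions on a fixed domain, and that the isomorphism is topological when $\mathrm{Hol}_0(\mathcal{N}_\C)$ carries its natural inductive topology.

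The main obstacle, such as it is, is not a mathematical gap but a bookkeeping one: one must make sure that the abstract hypotheses in \cite{KSWY98} are stated as \eqref{PropertyA1} and \eqref{PropertyA2} verbatim (they are — this is emphasized in the Introduction and in Subsection~\ref{subsec:charact-thms}), and that no additional assumption on $\mathcal{N}$ beyond the nuclear-triple structure of \eqref{eq:abs-nuclear-triple} is silently used. Since the entire non-Gaussian framework of the paper is built to match \cite{KSWY98}, there is nothing further to prove; the only care needed is that $a^*\in(0,1)$ is assumed throughout this subsection, which is what upgrades \eqref{PropertyA1} from "holomorphic in a neighborhood of zero" to the honest hypothesis needed, and guarantees ${}_m\Psi_p$ is entire.
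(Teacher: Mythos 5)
Your proposal matches the paper's treatment: the paper likewise establishes \eqref{PropertyA1} via Theorem~\ref{thm:holomorphic-LT} and \eqref{PropertyA2} via Theorem~\ref{thm:mixture}, and then obtains the statement as a special case of Theorem~8.34 in \cite{KSWY98}. Your additional remarks on the germ space $\mathrm{Hol}_0(\mathcal{N}_\C)$ and the role of $a^*\in(0,1)$ are consistent with the paper and do not change the argument.
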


The above characterization theorem leads directly to two corollaries for integrals of elements in $(\mathcal{N})^{-1}_{\mu_{\Psi}}$ in a weak sense and the convergence of sequences in $(\mathcal{N})^{-1}_{\mu_{\Psi}}$.
\begin{theorem}[see Thm.~4.10~in \cite{JahnI} for the Mittag-Leffler measure]\label{thm:CharIntegrableMap}
Let $(T,\mathcal{B},\nu)$ be a measure space and $\Phi_t\in (\mathcal{N})^{-1}_{\mu_{\Psi}}$ for all $t\in T$. Let $\mathcal{U}\subset\mathcal{N}_\C$ be an appropriate neighborhood of zero and $C$ a positive constant such that:
\begin{enumerate}
	\item $S_{\mu_{\Psi}}\Phi_\cdot(\xi)\colon T\to\C$ is measurable for all $\xi\in\mathcal{U}$.
	\item $\int_T |S_{\mu_{\Psi}}\Phi_t(\xi)|\,\mathrm{d}\nu(t) \leq C$ for all $\xi\in\mathcal{U}$.
\end{enumerate}
Then there exists $\Xi \in (\mathcal{N})^{-1}_{\mu_{\Psi}}$ such that for all $\xi\in\mathcal{U}$
\[
	S_{\mu_{\Psi}}\Xi(\xi) = \int_T S_{\mu_{\Psi}}\Phi_t(\xi)\,\mathrm{d}\nu(t).
\]
We denote $\Xi$ by $\int_T \Phi_t\,\mathrm{d}\nu(t)$ and call it the weak integral of $\Phi$.
\end{theorem}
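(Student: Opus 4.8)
The plan is to reduce the statement to the characterization theorem, Theorem~\ref{thm:StransIsomor}, which says that $S_{\mu_{\Psi}}$ is a topological isomorphism between $(\mathcal{N})^{-1}_{\mu_{\Psi}}$ and $\mathrm{Hol}_0(\mathcal{N}_{\mathbb{C}})$. Concretely, the candidate for $\Xi$ is the functional whose $S_{\mu_{\Psi}}$-transform is the pointwise integral
\[
G(\xi):=\int_T (S_{\mu_{\Psi}}\Phi_t)(\xi)\,\mathrm{d}\nu(t),
\]
so the whole task is to verify that $G$ lies in $\mathrm{Hol}_0(\mathcal{N}_{\mathbb{C}})$; then $\Xi:=S_{\mu_{\Psi}}^{-1}G$ does the job and is unique because $S_{\mu_{\Psi}}$ is injective. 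First I would fix the neighborhood: by Theorem~\ref{thm:StransIsomor} and the definition of $\mathrm{Hol}_0$, there is a balanced open set, which we may take of the form $U_{l,k}=\{\xi\in\mathcal{N}_{\mathbb{C}}: 2^k|\xi|_l<1\}$, on which every $S_{\mu_{\Psi}}\Phi_t$ is holomorphic; shrinking $\mathcal{U}$ if necessary we assume $\mathcal{U}\subset U_{l,k}$ for a single pair $(l,k)$ working simultaneously for all $t$ (this uniformity is part of what ``$\Phi_t\in(\mathcal{N})^{-1}_{\mu_\Psi}$ for all $t$ together with hypothesis~(2)'' must be read to provide, exactly as in Thm.~4.10 of \cite{JahnI}).

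The three things to check about $G$ are: (i) it is well defined and finite on $\mathcal{U}$; (ii) it is bounded on a smaller ball; (iii) it is $G$-holomorphic, i.e.\ holomorphic on every finite-dimensional complex subspace, and locally bounded — by the standard Dineen-type criterion this gives holomorphy on $\mathcal{U}$. Item~(i) is immediate from hypothesis~(2). For~(iii) I would use Morera's theorem: for any $\xi,\eta\in\mathcal{N}_{\mathbb{C}}$ and any triangle $\Delta$ in the complex plane with $z\mapsto\xi+z\eta$ mapping $\bar\Delta$ into $\mathcal{U}$, one has $\oint_{\partial\Delta} G(\xi+z\eta)\,\mathrm{d}z=\oint_{\partial\Delta}\int_T (S_{\mu_{\Psi}}\Phi_t)(\xi+z\eta)\,\mathrm{d}\nu(t)\,\mathrm{d}z$; Fubini (justified by hypothesis~(2), which bounds the inner integral uniformly, and measurability from hypothesis~(1)) lets us swap the integrals, and each $\oint_{\partial\Delta}(S_{\mu_{\Psi}}\Phi_t)(\xi+z\eta)\,\mathrm{d}z=0$ because $S_{\mu_{\Psi}}\Phi_t$ is holomorphic. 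Hence each slice $z\mapsto G(\xi+z\eta)$ is holomorphic, so $G$ is $G$-holomorphic; together with the local boundedness from~(2) this yields $G\in\mathrm{Hol}_0(\mathcal{N}_{\mathbb{C}})$.

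The main obstacle — really the only subtle point — is the measurability needed to apply Fubini in the Morera step: hypothesis~(1) only gives measurability of $t\mapsto (S_{\mu_{\Psi}}\Phi_t)(\xi)$ for fixed $\xi$, whereas the contour integral requires joint measurability in $(t,z)$. This is handled by noting that for fixed $t$ the map $z\mapsto(S_{\mu_{\Psi}}\Phi_t)(\xi+z\eta)$ is continuous, so the function $(t,z)\mapsto(S_{\mu_{\Psi}}\Phi_t)(\xi+z\eta)$ is a Carathéodory function (measurable in $t$, continuous in $z$) and therefore jointly measurable; the contour integral is then a limit of Riemann sums of measurable functions, hence measurable in $t$, and Fubini applies. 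Once holomorphy of $G$ is established, the conclusion $S_{\mu_{\Psi}}\Xi=G$, i.e.\ $S_{\mu_{\Psi}}\Xi(\xi)=\int_T S_{\mu_{\Psi}}\Phi_t(\xi)\,\mathrm{d}\nu(t)$ for all $\xi\in\mathcal{U}$, is just the definition of $\Xi$, and we name this element $\int_T\Phi_t\,\mathrm{d}\nu(t)$.
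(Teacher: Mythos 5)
Your proposal is correct and follows exactly the route the paper intends: the paper presents this theorem as a direct corollary of the characterization Theorem~\ref{thm:StransIsomor} (citing Thm.~4.10 in the Mittag-Leffler case), and your argument --- showing $\xi\mapsto\int_T S_{\mu_{\Psi}}\Phi_t(\xi)\,\mathrm{d}\nu(t)$ lies in $\mathrm{Hol}_0(\mathcal{N}_{\mathbb{C}})$ via local boundedness plus Morera/Fubini on complex lines, then setting $\Xi=S_{\mu_{\Psi}}^{-1}G$ --- is precisely the standard proof behind that citation. Your handling of the joint-measurability point and of the ``appropriate neighborhood'' hypothesis is consistent with how the cited results are formulated, so no gap remains.
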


\begin{definition}
For $\Phi \in (\mathcal{N})^{-1}_{\mu_{\Psi}}$ and $\xi \in U_{l,k}=\{ \xi \in \mathcal{N}_{\mathbb{C}} |\; 2^k |\xi|_{l}<1 \}$, we define the $T_{\mu_{\Psi}}$-transform by
\[  
(T_{\mu_{\Psi}}\Phi)(\xi)=\langle\! \langle \Phi, \e^{\mathrm{i}\langle \cdot, \xi \rangle } \rangle \!\rangle_{\mu_{\Psi}}=\int_{\mathcal{S}'}\e^{\mathrm{i}\langle \omega, \xi\rangle}\Phi(\omega)\, \mu_{\Psi}(\mathrm{d}\omega). 
\]
\end{definition}
\begin{remark}
We recall that if $\xi=0$, then $\exp{(\mathrm{i} \langle \cdot, \xi \rangle)}=1$ and we have 
\[ (T_{\mu_{\Psi}}\Phi)(0)=\mathbb{E}_{\mu_{\Psi}}(\Phi). \]
\end{remark}

\section{Applications}
\label{sec:DonskDelta}
\noindent Donsker's delta is an important example of a Hida distribution within Gaussian analysis, used extensively across quantum field theory, stochastic differential equations, and mathematical finance. For further details, refer to \cite{HKPS93, Wes95, AOU01} and the references therein. In this section, we introduce a distribution in $(\mathcal{N})^{-1}_{\mu_{\Psi}}$ that is the analog of Donsker's delta in Gaussian analysis, i.e.~$\delta_a(\cdot)$, $a \in \R$. More precisely, we want to give meaning to the composition $\delta_a(\langle \cdot, \eta \rangle)$, $\eta \in \mathcal{H}$, at $a=0$.

We proceed as follows:

\begin{enumerate}

\item Use the integral representation for the Dirac delta distribution $\delta$ and give sense to the expression
\begin{equation}\label{eq:deltadistrib}
	\delta(\langle\cdot,\eta\rangle) = \frac{1}{2\pi} \int_\R \mathrm{e}^{\mathrm{i}x\langle\cdot,\eta\rangle}\,\mathrm{d} x,\quad\eta\in\mathcal{H}.
\end{equation}
Note that $\exp(\mathrm{i}x\langle\cdot,\eta\rangle)\in L^2(\mu_{\Psi})$ because $|\exp(\mathrm{i}x\langle\omega,\eta\rangle)|^2=1$ and we have a probability measure.

\item We show that the right hand side of \eqref{eq:deltadistrib} defines an element in $(\mathcal{N})^{-1}_{\mu_{\Psi}}$ by Theorem~\ref{thm:CharIntegrableMap}. The $T_{\mu_{\Psi}}$-transform of the integrand $\exp(\mathrm{i}x\langle\cdot,\eta\rangle)$ can be computed as follows.
For every $\varphi\in\mathcal{N}_\C$ and $\eta \in \mathcal{N}$ it follows from Equation \eqref{eq:FoxHmeasure} that
\begin{align}
	T_{\mu_{\Psi}}\exp(\mathrm{i}x\langle\cdot,\eta\rangle)(\varphi) &= \int_{\mathcal{N}'} \exp(\mathrm{i}\langle\omega,x\eta+\varphi\rangle)\,\mathrm{d}\mu_{\Psi}(\omega) \nonumber \\
	&= \frac{1}{K} \, _m\!\Psi_q\left(-\frac{1}{2}x^2\langle\eta,\eta\rangle -\frac{1}{2} \langle\varphi,\varphi\rangle - x\langle\varphi,\eta\rangle \right).\label{eq:Ttransform}
\end{align}
We note that $x\mapsto T_{\mu_{\Psi}}\exp(\mathrm{i}x\langle\cdot,\eta\rangle)(\varphi)$ is measurable because it is a composition of a completely monotone function and a polynomial. By Corollary~3.8 in \cite{BCDS23} and Theorem~2.9 in \cite{SaiKil}, we extend the function $_m\Psi_p(\cdot)$ to the complex plane for $a^*\in(0,1)$.
\item We show that $\int_\R |T_{\mu_{\Psi}}\exp(\mathrm{i} x \langle\cdot,\eta\rangle)(\varphi)|\,\mathrm{d} x$ is bounded for all $\varphi$ in a neighborhood of zero of $\mathcal{N}_\C$. We assume in addition that
\begin{equation}\label{eq:HypParamDonsker}
 \quad 2b_j+\beta_j>0, \quad j=1,\dots,m. \end{equation}

\item We summarize the results in Theorem \ref{theorem:Donsker}.

\end{enumerate}
\begin{proposition}\label{Prop:Integral}
Let the assumptions of Lemma \ref{lem:HFiniteDensities} together with $a^* \in (0,1)$ and $2b_j + \beta_j>0$,  $j=1,\dots,m$, hold. 
For $\eta\in\mathcal{H}$, $\varphi\in\mathcal{N}_\C$ and $x\in\R$ let $z(x,\eta,\varphi):=\tfrac{1}{2} x^2\langle\eta,\eta\rangle + \tfrac{1}{2}\langle\varphi,\varphi\rangle + x\langle\eta,\varphi\rangle$. Then there exists a constant $C>0$ such that
\[
\int_\R |\,_m\!\Psi_p(-z(x,\eta,\varphi))|\,\mathrm{d} x \leq C,\quad\varphi\in\mathcal{U}_M := \big\{ \varphi\in\mathcal{N}_\C \mid |\varphi| < M \big\}
\]
for any $M>0$.
\end{proposition}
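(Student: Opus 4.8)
The plan is to reduce the bound to the finiteness of a negative fractional moment of the F$H$dam $\varrho$, using that $\tfrac1K\,_m\!\Psi_p(-\cdot)=\mathscr{L}\varrho$ (Lemma~\ref{lem:HFiniteDensities}). First I would observe that, by symmetry and bilinearity of $\langle\cdot,\cdot\rangle$, $z(x,\eta,\varphi)=\tfrac12\langle x\eta+\varphi,\,x\eta+\varphi\rangle$; writing $x\eta+\varphi=u_x+\mathrm{i}v$ with $u_x=x\eta+\varphi_1$, $v=\varphi_2$ real elements of $\mathcal{H}$, this gives $\Re z(x)=\tfrac12\bigl(|u_x|^2-|v|^2\bigr)$ and, via Cauchy--Schwarz, $|z(x)|\le\tfrac12|x\eta+\varphi|^2\le\tfrac12\bigl(|x|\,|\eta|+|\varphi|\bigr)^2$. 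One must assume $\eta\ne0$ here — if $\eta=0$ the integrand is a nonzero constant in $x$ and the integral diverges — and the constant $C$ then depends only on $\eta$ and $M$.

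Next I would split $\int_\R=\int_{|x|\le R}+\int_{|x|>R}$ with $R:=4M/|\eta|$. On $\{|x|\le R\}$ the argument $z(x)$ stays in the fixed disk $\{|w|\le\tfrac12(R|\eta|+M)^2\}$ for every $\varphi\in\mathcal{U}_M$; since $a^*\in(0,1)$ makes $_m\!\Psi_p$ entire, it is bounded on that disk by some $C_1=C_1(M)$, so this part is at most $2RC_1$. On $\{|x|>R\}$ I would produce a positive quadratic lower bound for $\Re z$: completing the square, $|u_x|^2=|\eta|^2(x-x_0)^2+|\varphi_1|^2-(\eta,\varphi_1)^2/|\eta|^2\ge|\eta|^2(x-x_0)^2$ with $x_0:=-(\eta,\varphi_1)/|\eta|^2$ and $|x_0|<M/|\eta|$; since $|v|<M$ and $|x|>4M/|\eta|$ (so $|x-x_0|>\tfrac34|x|$ and $M^2<|\eta|^2x^2/16$), a short computation yields $\Re z(x)\ge\tfrac14|\eta|^2x^2$.

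With that bound at hand, on $\{|x|>R\}$ the Laplace representation applies: $\tfrac1K\,_m\!\Psi_p(-z)=\int_0^\infty\e^{-z\tau}\varrho(\tau)\,\mathrm{d}\tau$ extends from $[0,\infty)$ to $\{\Re z\ge0\}$ by analytic continuation (the integral converging absolutely since $\varrho$ is a probability density), hence
\[
|\,_m\!\Psi_p(-z(x))|\le K\int_0^\infty\e^{-\Re z(x)\tau}\varrho(\tau)\,\mathrm{d}\tau\le K\int_0^\infty\e^{-|\eta|^2x^2\tau/4}\varrho(\tau)\,\mathrm{d}\tau.
\]
Integrating over $\{|x|>R\}$, Tonelli lets me exchange the order and evaluate the Gaussian integral $\int_\R\e^{-|\eta|^2x^2\tau/4}\,\mathrm{d}x=2\sqrt\pi/(|\eta|\sqrt\tau)$, so that part is at most $\tfrac{2K\sqrt\pi}{|\eta|}\int_0^\infty\tau^{-1/2}\varrho(\tau)\,\mathrm{d}\tau$. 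Adding the two pieces gives the claim with $C=2RC_1+\tfrac{2K\sqrt\pi}{|\eta|}\int_0^\infty\tau^{-1/2}\varrho(\tau)\,\mathrm{d}\tau$, uniformly over $\varphi\in\mathcal{U}_M$.

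The one substantive point — and where the extra hypothesis \eqref{eq:HypParamDonsker} enters — is the finiteness of the negative half-moment $\int_0^\infty\tau^{-1/2}\varrho(\tau)\,\mathrm{d}\tau$. I would obtain it from the Mellin transform of the Fox-$H$ function, i.e.\ the moment formula of Lemma~\ref{lem:HFiniteDensities} at order $-1/2$:
\[
\int_0^\infty\tau^{-1/2}\varrho(\tau)\,\mathrm{d}\tau=\frac{1}{K}\,\frac{\prod_{j=1}^m\Gamma(b_j+\beta_j/2)}{\prod_{i=1}^p\Gamma(a_i+\alpha_i/2)}<\infty,
\]
finite because $2b_j+\beta_j>0$ keeps every numerator Gamma away from its poles, while integrability near $\tau=\infty$ is automatic since $\varrho$ has finite moments of all orders. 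Equivalently, $2b_j+\beta_j>0$ says exactly that the rightmost pole of $\mathcal{H}^{m,0}_{p,m}(s)$, at $s=-\min_{j}(b_j/\beta_j)$, sits strictly left of $s=\tfrac12$, i.e.\ $\varrho(\tau)=O(\tau^{-1/2+\varepsilon})$ as $\tau\to0^+$; this is the estimate I expect to be the main obstacle, the split and the completion-of-the-square being routine.
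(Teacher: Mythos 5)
Your argument is correct, but it takes a genuinely different route from the paper's. The paper does not split the $x$-integral at all: it applies the Laplace-representation bound $|\,_m\!\Psi_p(-z(x,\eta,\varphi))|\le\int_0^\infty H^{m,0}_{p,m}(r)\,\e^{-r\Re z(x,\eta,\varphi)}\,\mathrm{d}r$ for \emph{every} $x\in\R$ (this uses the identity $\tfrac1K\,_m\!\Psi_p(-z)=(\mathscr{L}\varrho)(z)$ also where $\Re z<0$, which is available because $a^*\in(0,1)$ forces super-exponential decay of the Fox-$H$ density), then evaluates the Gaussian $x$-integral exactly, bounds the resulting exponent by $M^2$ via Cauchy--Schwarz, and controls the remaining integral $\int_0^\infty H^{m,0}_{p,m}(r)\,r^{-1/2}\e^{M^2r/2}\,\mathrm{d}r$ by the Fox-$H$ integration formula (2.8.12) of Kilbas--Saigo, which is where both $a^*\in(0,1)$ and $2b_j+\beta_j>0$ enter. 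You instead confine the Laplace representation to $\{|x|>R\}$, where your completion of the square gives $\Re z\ge\tfrac14|\eta|^2x^2>0$, so no growth control on $\varrho$ is needed there, and you dispose of the compact region by entirety of $_m\!\Psi_p$ (bought by $a^*\in(0,1)$); the price is that you must supply the negative half-moment $\int_0^\infty\tau^{-1/2}\varrho(\tau)\,\mathrm{d}\tau<\infty$ yourself, which plays exactly the role of the $r^{-1/2}$ factor in the paper. Two small points to tighten: Lemma~\ref{lem:HFiniteDensities} states the moment formula only for integer orders $l\in\N_0$, so the evaluation at order $-1/2$ is not a direct citation --- but finiteness is all you need, and your Mellin-strip/small-$\tau$ argument ($\varrho(\tau)=O(\tau^{\min_j b_j/\beta_j})$ as $\tau\to0^+$ together with $2b_j+\beta_j>0$, plus boundedness of $\tau^{-1/2}$ on $[1,\infty)$) does deliver it, and the value you quote is consistent with the paper's later formula for $\mathbb{E}_{\mu_\Psi}(\delta(\langle\cdot,\eta\rangle))$. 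Your observation that $\eta\neq0$ must be assumed (and that $C$ then depends on $\eta$ and $M$) is also correct and matches the paper's implicit use of $\sqrt{2\pi/\langle\eta,\eta\rangle}$ and the hypothesis $0\neq\eta$ in Theorem~\ref{theorem:Donsker}.
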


\begin{proof}
Let $\eta \in \mathcal{H}$, $\varphi \in\mathcal{U}_M$ with $M>0$ be given. First, we note that $\,_m\!\Psi_p(-z(x,\eta,\varphi))$ is measurable. Using the fact that $_m\!\Psi_p(\cdot)$ is the Laplace transform of the density $\varrho$ (cf.~Equation~\eqref{eq:LTdensitiesFiniteMoments}) and using Fubini's theorem we obtain
\begin{align*}
	&\int_\R |\,_m\!\Psi_p(-z(x,\eta,\varphi))|\,\mathrm{d} x	\leq \int_0^\infty H^{m,0}_{p,m}(r) \int_\R \exp\left(-r\Re(z(x,\eta,\varphi))\right)\,\mathrm{d} x\,\mathrm{d} r \\
	&=\sqrt{\frac{2\pi}{\langle\eta,\eta\rangle}} \int_0^\infty H^{m,0}_{p,m}(r) r^{-1/2} \exp\left(\frac{r}{2}\left(\frac{\langle\eta,\varphi_1\rangle^2}{\langle\eta,\eta\rangle}-|\varphi_1|^2+|\varphi_2|^2 \right) \right) \,\mathrm{d} r.
\end{align*}
For $\varphi \in \mathcal{U}_M$, we have by Cauchy-Schwarz inequality that
\begin{align*}
	\frac{\langle\eta,\varphi_1\rangle^2}{\langle\eta,\eta\rangle}+|\varphi_2|^2-|\varphi_1|^2<M^2.
\end{align*}
This yields
\[
	\int_\R |\,_m\!\Psi_p(-z(x,\eta,\varphi))|\,\mathrm{d} x \leq \sqrt{\frac{2\pi}{\langle\eta,\eta\rangle}} \int_0^\infty H^{m,0}_{p,m}(r) r^{-1/2} \exp\left(\frac{1}{2}M^2 r \right) \,\mathrm{d} r,
\]
where the integral on the right side is finite by applying Equation (2.8.12) in \cite{SaiKil} with
\[
H^{M,N}_{P,Q}\left[z \, \Bigg| \genfrac{}{}{0pt}{}{(c_i,\gamma_i)_P}{(d_j,\delta_J)_Q}\right]=H^{1,0}_{0,1}\left[-\frac{M^2}{2}r \, \Bigg| \genfrac{}{}{0pt}{}{-\!-}{(0,1)}\right]=\exp\left(\frac{M^2}{2}r\right).\qedhere
\]
\end{proof}

\begin{theorem}\label{theorem:Donsker}
Let $0\neq\eta\in\mathcal{H}$ be given. Then Donsker's delta is defined via the integral
\[
	\delta(\langle\cdot,\eta\rangle) := \frac{1}{2\pi}\int_\R \exp\left( \mathrm{i}x\langle\cdot,\eta\rangle \right)\,\mathrm{d} x,
\]
and exists in the space $\left(\mathcal{N}\right)^{-1}_{\mu_{\Psi}}$ as a weak integral in the sense of Theorem \ref{thm:CharIntegrableMap}. Moreover for all $\varphi\in\mathcal{U}_M$, $M>0$, as in Proposition \ref{Prop:Integral}, we have
\[
	\left( T_{\mu_{\Psi}}\delta(\langle\cdot,\eta\rangle)\right)(\varphi) = \frac{1}{K\sqrt{2\pi\langle\eta,\eta\rangle}} \, _m\!\Psi_q \left( \genfrac{}{}{0pt}{}{(b_j+\beta_j/2,\beta_j)_m}{(a_i+\alpha_i/2,\alpha_i)_p} \Bigg| -\frac{1}{2}\left( \langle\varphi,\varphi\rangle - \frac{\langle\eta,\varphi\rangle^2}{\langle\eta,\eta\rangle}\right)\right). 
\]
\end{theorem}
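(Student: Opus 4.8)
The plan is to verify the two hypotheses of Theorem~\ref{thm:CharIntegrableMap} for the family $\Phi_x := \exp(\mathrm{i}x\langle\cdot,\eta\rangle)$, $x\in\R$, and then to identify the $T_{\mu_\Psi}$-transform of the resulting weak integral. By Remark~\ref{rem:ext-dual-pairing} and the extension of the characteristic functional to all $f\in\mathcal{H}$ carried out after Theorem~\ref{thm:MomentmuHInfiniteDimension}, each $\Phi_x$ lies in $L^2(\mu_\Psi)\subset(\mathcal{N})^{-1}_{\mu_\Psi}$ since $|\Phi_x|\equiv 1$. First I would record, for $\varphi\in\mathcal{N}_\C$, the $T_{\mu_\Psi}$-transform computed in Equation~\eqref{eq:Ttransform}, namely $T_{\mu_\Psi}\Phi_x(\varphi)= \frac{1}{K}\,_m\!\Psi_p(-z(x,\eta,\varphi))$ with $z(x,\eta,\varphi)=\tfrac12 x^2\langle\eta,\eta\rangle+\tfrac12\langle\varphi,\varphi\rangle+x\langle\eta,\varphi\rangle$; since $T_{\mu_\Psi}$ and $S_{\mu_\Psi}$ differ only by a factor in $\mathrm{Hol}_0(\mathcal{N}_\C)$, Theorem~\ref{thm:CharIntegrableMap} applies verbatim with $T_{\mu_\Psi}$ in place of $S_{\mu_\Psi}$. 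Measurability of $x\mapsto T_{\mu_\Psi}\Phi_x(\varphi)$ (hypothesis~(1)) is immediate because it is the composition of the continuous function $_m\!\Psi_p$ with the polynomial $x\mapsto -z(x,\eta,\varphi)$; this was already noted after \eqref{eq:Ttransform}.

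The integrability bound (hypothesis~(2)) is precisely the content of Proposition~\ref{Prop:Integral}: under the assumptions of Lemma~\ref{lem:HFiniteDensities}, $a^*\in(0,1)$, and $2b_j+\beta_j>0$, there is $C>0$ with $\int_\R|\,_m\!\Psi_p(-z(x,\eta,\varphi))|\,\mathrm{d}x\le C$ uniformly for $\varphi\in\mathcal{U}_M$. Dividing by $2\pi K$, hypotheses~(1)--(2) of Theorem~\ref{thm:CharIntegrableMap} hold on the neighborhood $\mathcal{U}_M$ of zero, so the weak integral $\Xi:=\frac{1}{2\pi}\int_\R\Phi_x\,\mathrm{d}x$ exists in $(\mathcal{N})^{-1}_{\mu_\Psi}$; this is the object we call $\delta(\langle\cdot,\eta\rangle)$. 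Note $\mathcal{U}_M$ is genuinely a zero-neighborhood in $\mathcal{N}_\C$ since $|\cdot|$ is continuous on $\mathcal{N}_\C$, which is what Theorem~\ref{thm:CharIntegrableMap} requires.

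It remains to compute $T_{\mu_\Psi}\Xi$. By the $T$-transform version of Theorem~\ref{thm:CharIntegrableMap}, for $\varphi\in\mathcal{U}_M$ we have $T_{\mu_\Psi}\Xi(\varphi)=\frac{1}{2\pi K}\int_\R{}_m\!\Psi_p(-z(x,\eta,\varphi))\,\mathrm{d}x$. I would now insert the Laplace representation $_m\!\Psi_p(-z)=\int_0^\infty e^{-rz}H^{m,0}_{p,m}(r)\,\mathrm{d}r$ from Equation~\eqref{eq:LTdensitiesFiniteMoments}, apply Fubini (justified by the absolute bound from Proposition~\ref{Prop:Integral}), and perform the Gaussian integral in $x$: completing the square in $x\mapsto \tfrac12 x^2\langle\eta,\eta\rangle+x\langle\eta,\varphi\rangle$ gives $\int_\R e^{-r(\frac12 x^2\langle\eta,\eta\rangle+x\langle\eta,\varphi\rangle)}\,\mathrm{d}x=\sqrt{\tfrac{2\pi}{r\langle\eta,\eta\rangle}}\exp\!\big(\tfrac{r}{2}\tfrac{\langle\eta,\varphi\rangle^2}{\langle\eta,\eta\rangle}\big)$. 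This leaves
\[
T_{\mu_\Psi}\Xi(\varphi)=\frac{1}{K\sqrt{2\pi\langle\eta,\eta\rangle}}\int_0^\infty r^{-1/2}H^{m,0}_{p,m}(r)\exp\!\left(-\frac{r}{2}\Big(\langle\varphi,\varphi\rangle-\frac{\langle\eta,\varphi\rangle^2}{\langle\eta,\eta\rangle}\Big)\right)\mathrm{d}r.
\]
Finally I would recognize the $r^{-1/2}$ factor as shifting the $b_j$-parameters of the underlying Fox-$H$ density: using the Mellin/parameter-shift identities (Equations~(2.1.3)--(2.1.5) in \cite{SaiKil}, exactly as in the proof of Theorem~\ref{thm:FiniteDimensionalFoxHGaussianDensity}), $r^{-1/2}H^{m,0}_{p,m}\big[r\,\big|{(a_i,\alpha_i)\atop(b_j,\beta_j)}\big]$ is (up to the normalizing constants of Lemma~\ref{lem:HFiniteDensities}) the density whose Laplace transform is $_m\!\Psi_q$ with parameters $(b_j+\beta_j/2,\beta_j)$ and $(a_i+\alpha_i/2,\alpha_i)$; reading off the Laplace transform at $s=\tfrac12(\langle\varphi,\varphi\rangle-\langle\eta,\varphi\rangle^2/\langle\eta,\eta\rangle)\ge 0$ yields the claimed formula. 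The main obstacle is the bookkeeping in this last parameter-shift step — making sure the $r^{-1/2}$ weight is correctly absorbed into the Fox-$H$ parameters and that the normalization constant $K$ is unchanged — together with the (routine but necessary) verification that $\langle\varphi,\varphi\rangle-\langle\eta,\varphi\rangle^2/\langle\eta,\eta\rangle$ has nonnegative real part on $\mathcal{U}_M$ so that the Laplace-transform evaluation is legitimate; the Fubini/Gaussian-integral part is standard and already implicit in Proposition~\ref{Prop:Integral}.
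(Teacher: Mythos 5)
Your proposal is correct and follows essentially the same route as the paper's proof: existence via Proposition~\ref{Prop:Integral} together with Theorem~\ref{thm:CharIntegrableMap}, then the Laplace representation of $_m\!\Psi_p$, Fubini, and the Gaussian integral in $x$, ending with the identification of the remaining $r$-integral as the generalized Wright function with parameters shifted by $\beta_j/2$ and $\alpha_i/2$. The only cosmetic difference lies in the final bookkeeping step: the paper invokes Theorem~2.9 of \cite{SaiKil} to express that integral as $H^{1,m}_{m,p+1}$ and then applies \eqref{eq:defGenWrightfuncasFoxHfunc}, whereas you absorb the weight $r^{-1/2}$ via the parameter-shift identities and read off the Laplace transform from Lemma~\ref{lem:HFiniteDensities}; this works (the hypothesis $2b_j+\beta_j>0$ keeps the shifted density admissible), and your residual worry about the real part of the argument is settled by the entire extension of the transform guaranteed by $a^*\in(0,1)$.
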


\begin{proof}
Due to Proposition \ref{Prop:Integral} there exists a constant $C>0$ such that
\[
	\frac{1}{2\pi} \int_\R \left( T_{\mu_{\Psi}}\exp(\mathrm{i}x\langle\cdot,\eta\rangle)\right)(\varphi)\,\mathrm{d} x < C,\quad\varphi\in\mathcal{U}_M.
\]
Thus, by Theorem \ref{thm:CharIntegrableMap} it follows that $\delta(\langle\cdot,\eta\rangle)$ exists as an element in $(\mathcal{N})^{-1}_{\mu_{\Psi}}$. Finally we calculate the $T_{\mu_{\Psi}}$-transform of Donsker's delta using Theorem~2.9 from \cite{SaiKil} in $*$ below:
\begin{align*}
	&T_{\mu_{\Psi}}\delta(\langle\cdot,\eta\rangle)(\varphi) = \frac{1}{2\pi K} \int_\R \,_m\!\Psi_p(-z(x,\eta,\varphi))\,\mathrm{d} x \\
	&= \frac{1}{2\pi K} \int_0^\infty H^{m,0}_{p,m}(r) \exp\left( -\frac{1}{2} r\langle\varphi,\varphi\rangle\right) \int_\R \exp\left( -\frac{1}{2} r\langle\eta,\eta\rangle x^2 - r\langle\eta,\varphi\rangle x\right)\mathrm{d} x\,\mathrm{d} r \\
	&= \frac{1}{K\sqrt{2\pi\langle\eta,\eta\rangle}} \int_0^\infty H^{m,0}_{p,m}(r) r^{-1/2} \exp\left(-\frac{r}{2}\left( \langle\varphi,\varphi\rangle - \frac{\langle\eta,\varphi\rangle^2}{\langle\eta,\eta\rangle}\right)\right)\mathrm{d} r\\
 &\overset{*}{=} \frac{1}{K\sqrt{2\pi\langle\eta,\eta\rangle}} H^{1,m}_{m,p+1}\left[ \frac{1}{2}\left( \langle\varphi,\varphi\rangle - \frac{\langle\eta,\varphi\rangle^2}{\langle\eta,\eta\rangle}\right) \, \Bigg| \genfrac{}{}{0pt}{}{(1-b_j-\beta_j/2,\beta_j)_m}{(0,1),(1-a_{i-1}-\alpha_{i-1}/2,\alpha_{j-1})_{2,p+1}}\right].
\end{align*}
Applying Equation \eqref{eq:defGenWrightfuncasFoxHfunc}, we conclude the proof.
\end{proof}

\begin{remark}\label{rem:TtransfDonskerSeriesExpansion}
We can express the $T_{\mu_{\Psi}}$-transform of Donsker's delta as a series expansion, which turns out to be a known special function for the proper choice of the parameters.
\begin{enumerate}
\item We find the series expansion of the $T_{\mu_{\Psi}}$-transform of Donsker's delta with the help of Remark \ref{rem:Wright'sCharacteFunctionEntire}:
\begin{align*}
	&H^{1,m}_{m,p+1}\left[z \, \Bigg| \genfrac{}{}{0pt}{}{(1-b_j-\beta_j/2,\beta_j)_m}{(0,1),(1-a_{i-1}-\alpha_{i-1}/2,\alpha_{j-1})_{2,p+1}}\right]\\
    &= \, _m\!\Psi_q \left( \genfrac{}{}{0pt}{}{(b_j+\beta_j/2,\beta_j)_m}{(a_i+\alpha_i/2,\alpha_i)_p} \Bigg| \, -z\right) \\
    &= \sum_{k=0}^\infty \frac{ \prod_{j=1}^m \Gamma(b_j+\beta_j(1/2+k))}{ \prod_{i=1}^p \Gamma(a_i+\alpha_i(1/2+k))}\frac{(-z)^k}{k!},
\end{align*}
for $z\in\C$.
\item The $T_{\mu_{\Psi}}$-transform of Donsker's delta could be decomposed into a series of polynomials by using Fox-Hermite polynomials; see Lemma~\ref{lem:FoxHermitePolyPropert}-3.
\item For $b_1=0,\, \beta_1=1,\, a_1=1-\rho,\, \alpha_1=\rho$ the $T_{\mu_{\Psi}}$-transform of Donsker's delta in generalized Wright analysis coincides with $T_{\mu_\rho}$-transform of Donsker's delta in Mittag-Leffler analysis; see \cite{JahnI}. 

\item Note that in the case $\rho=1$:
\[
	\sum_{k=0}^\infty \frac{(-1)^k \Gamma(k+1/2)}{k! \Gamma(1+k-1/2)}z^k = \mathrm{e}^{-z}.
\]
\end{enumerate}
\end{remark}

\begin{corollary}
The generalized expectation of Donsker's delta is given by 
\[  
	\mathbb{E}_{\mu_\Psi}\left(\delta\left(\langle\cdot,\eta\rangle\right)\right) = \left( T_{\mu_{\Psi}} \delta\left(\langle\cdot,\eta\rangle\right)\right)(0) =  \langle\!\langle\delta(\langle\cdot,\eta\rangle),1\rangle\!\rangle_{\mu_{\Psi}}.
\]
More specifically, we get:
\[
	\mathbb{E}_{\mu_\Psi}\left(\delta\left(\langle\cdot,\eta\rangle\right)\right)  = \frac{1}{K\sqrt{2\pi \langle\eta,\eta\rangle}}\frac{ \prod_{j=1}^m \Gamma(b_j+\beta_j/2)}{ \prod_{i=1}^p \Gamma(a_i+\alpha_i/2)}.
\]
\end{corollary}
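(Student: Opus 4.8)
The plan is to obtain this statement directly from Theorem~\ref{theorem:Donsker} by specializing the $T_{\mu_\Psi}$-transform formula at $\varphi=0$. First I would recall that $\varphi=0$ belongs to every neighborhood $\mathcal{U}_M$, $M>0$, so that the closed-form expression for $\left(T_{\mu_\Psi}\delta(\langle\cdot,\eta\rangle)\right)(\varphi)$ established in Theorem~\ref{theorem:Donsker} is applicable there. Moreover, since $\exp(\mathrm{i}\langle\cdot,0\rangle)=1\in(\mathcal{N})^1$ and the dual pairing $\langle\!\langle\cdot,\cdot\rangle\!\rangle_{\mu_\Psi}$ is the bilinear extension of the $L^2(\mu_\Psi)$ inner product, evaluation of the $T_{\mu_\Psi}$-transform at zero reproduces the generalized expectation: for any $\Phi\in(\mathcal{N})^{-1}_{\mu_\Psi}$ one has
\[
(T_{\mu_\Psi}\Phi)(0)=\langle\!\langle\Phi,\exp(\mathrm{i}\langle\cdot,0\rangle)\rangle\!\rangle_{\mu_\Psi}=\langle\!\langle\Phi,1\rangle\!\rangle_{\mu_\Psi}=\mathbb{E}_{\mu_\Psi}(\Phi),
\]
as already recorded in the remark following the definition of the $T_{\mu_\Psi}$-transform. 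Applying this with $\Phi=\delta(\langle\cdot,\eta\rangle)$, which is a well-defined element of $(\mathcal{N})^{-1}_{\mu_\Psi}$ by Theorem~\ref{theorem:Donsker}, gives the first chain of equalities in the statement.

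For the explicit value, I would substitute $\varphi=0$ into the formula of Theorem~\ref{theorem:Donsker}. Then $\langle\varphi,\varphi\rangle=0$ and $\langle\eta,\varphi\rangle=0$, so the argument of the generalized Wright function collapses to $-\tfrac12\big(0-0\big)=0$, leaving
\[
\mathbb{E}_{\mu_\Psi}\big(\delta(\langle\cdot,\eta\rangle)\big)=\frac{1}{K\sqrt{2\pi\langle\eta,\eta\rangle}}\, _m\!\Psi_q\left(\genfrac{}{}{0pt}{}{(b_j+\beta_j/2,\beta_j)_m}{(a_i+\alpha_i/2,\alpha_i)_p}\,\Bigg|\,0\right).
\]
It then remains to evaluate the gWf at the origin: using the series representation recorded in Remark~\ref{rem:TtransfDonskerSeriesExpansion}, only the $k=0$ term survives, so that
\[
{}_m\!\Psi_q\left(\genfrac{}{}{0pt}{}{(b_j+\beta_j/2,\beta_j)_m}{(a_i+\alpha_i/2,\alpha_i)_p}\,\Bigg|\,0\right)=\frac{\prod_{j=1}^m\Gamma(b_j+\beta_j/2)}{\prod_{i=1}^p\Gamma(a_i+\alpha_i/2)},
\]
which yields the stated formula after dividing by $K\sqrt{2\pi\langle\eta,\eta\rangle}$.

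There is no genuine obstacle here; the statement is a routine specialization. The only points deserving a word of care are that $\varphi=0\in\mathcal{U}_M$ (trivially) so that the formula of Theorem~\ref{theorem:Donsker} may be used, that $\langle\eta,\eta\rangle=|\eta|^2>0$ since $\eta\neq0$, so the prefactor is well-defined, and that term-by-term evaluation of the series for $_m\!\Psi_q$ at $0$ is legitimate — which is immediate, all the relevant $\Gamma$-values being finite and only the constant term contributing under the hypotheses of Lemma~\ref{lem:HFiniteDensities} together with $a^*\in(0,1)$ and $2b_j+\beta_j>0$.
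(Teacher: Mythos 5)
Your proposal is correct and follows essentially the same route as the paper: the paper's proof simply evaluates the series expansion of Remark~\ref{rem:TtransfDonskerSeriesExpansion}-1) at $z=0$ (i.e.\ $\varphi=0$ in Theorem~\ref{theorem:Donsker}), so that only the $k=0$ term of the gWf survives, yielding the stated ratio of Gamma factors. Your additional remarks on $(T_{\mu_\Psi}\Phi)(0)=\mathbb{E}_{\mu_\Psi}(\Phi)$ and on $0\in\mathcal{U}_M$ are consistent with the remark following the definition of the $T_{\mu_\Psi}$-transform and add nothing that conflicts with the paper's argument.
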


\begin{proof}
    Evaluating the series expansion from Remark \ref{rem:TtransfDonskerSeriesExpansion}-1) at $z=0$, the result follows.  
\end{proof}

In the same way, we can define Donsker's delta at any arbitrary point $a\in\R\backslash\{0\}$:
\begin{proposition}\label{prop:Donskersdeltaina}
Let $0\neq \eta\in\mathcal{H}$ be given and assume that \eqref{eq:HypParamDonsker} holds. Then
\[
	\delta_a(\langle\cdot,\eta\rangle) = \frac{1}{2\pi}\int_\R \exp\big( \mathrm{i} x(\langle\cdot,\eta\rangle-a)\big)\,\mathrm{d} x
\]
exists in $\left(\mathcal{N}\right)^{-1}_{\mu_{\Psi}}$ as a weak integral in the sense of Theorem \ref{thm:CharIntegrableMap} and defines Donsker's delta in $a\in\R\backslash\{0\}$.
\end{proposition}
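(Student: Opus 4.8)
The plan is to follow the proof scheme of Theorem~\ref{theorem:Donsker} essentially verbatim, the only new ingredient being the harmless oscillatory factor $\mathrm{e}^{-\mathrm{i}xa}$. Concretely, for $x\in\R$ set $\Phi_x:=\exp\big(\mathrm{i}x(\langle\cdot,\eta\rangle-a)\big)$, which lies in $L^2(\mu_\Psi)$ since $|\Phi_x|\equiv1$ and $\mu_\Psi$ is a probability measure. I would then verify that the family $(\Phi_x)_{x\in\R}$ satisfies the two hypotheses of Theorem~\ref{thm:CharIntegrableMap} (with $T=\R$, $\nu$ the Lebesgue measure, and $\mathcal{U}=\mathcal{U}_M$); the existence of the weak integral $\tfrac{1}{2\pi}\int_\R\Phi_x\,\mathrm{d}x$ in $(\mathcal{N})^{-1}_{\mu_\Psi}$ then follows, and this object is by definition $\delta_a(\langle\cdot,\eta\rangle)$.

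First I would compute the $T_{\mu_\Psi}$-transform of the integrand. Since $\langle\cdot,x\eta+\varphi\rangle$ is well defined for $\varphi\in\mathcal{N}_\C$, the computation \eqref{eq:Ttransform}, based on \eqref{eq:FoxHmeasure}, gives
\[
T_{\mu_\Psi}\Phi_x(\varphi)=\mathrm{e}^{-\mathrm{i}xa}\int_{\mathcal{N}'}\exp\big(\mathrm{i}\langle\omega,x\eta+\varphi\rangle\big)\,\mathrm{d}\mu_\Psi(\omega)=\frac{\mathrm{e}^{-\mathrm{i}xa}}{K}\,_m\!\Psi_p\big(-z(x,\eta,\varphi)\big),
\]
with $z(x,\eta,\varphi)=\tfrac12 x^2\langle\eta,\eta\rangle+\tfrac12\langle\varphi,\varphi\rangle+x\langle\eta,\varphi\rangle$ as in Proposition~\ref{Prop:Integral}, the extension of $_m\!\Psi_p(\cdot)$ to $\C$ being legitimate under the standing assumption $a^*\in(0,1)$ of this section (Corollary~3.8 in \cite{BCDS23}, Theorem~2.9 in \cite{SaiKil}). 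Measurability of $x\mapsto T_{\mu_\Psi}\Phi_x(\varphi)$ for each fixed $\varphi\in\mathcal{U}_M$ follows as in Theorem~\ref{theorem:Donsker}: it is the product of the continuous function $x\mapsto\mathrm{e}^{-\mathrm{i}xa}$ with the composition of a completely monotone function and a polynomial in $x$.

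For the uniform bound one simply observes that $|\mathrm{e}^{-\mathrm{i}xa}|=1$, so for every $\varphi\in\mathcal{U}_M$, $M>0$,
\[
\int_\R\big|T_{\mu_\Psi}\Phi_x(\varphi)\big|\,\mathrm{d}x=\frac{1}{K}\int_\R\big|\,_m\!\Psi_p(-z(x,\eta,\varphi))\big|\,\mathrm{d}x\leq\frac{C}{K},
\]
where the finiteness and uniformity in $\varphi$ of the right-hand side is precisely the content of Proposition~\ref{Prop:Integral}, whose hypotheses are exactly those assumed here (Lemma~\ref{lem:HFiniteDensities}, $a^*\in(0,1)$, and \eqref{eq:HypParamDonsker}). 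Thus both conditions of Theorem~\ref{thm:CharIntegrableMap} are met and $\delta_a(\langle\cdot,\eta\rangle)\in(\mathcal{N})^{-1}_{\mu_\Psi}$, with $S_{\mu_\Psi}$-transform (equivalently $T_{\mu_\Psi}$-transform, the two differing by a factor in $\mathrm{Hol}(\mathcal{N}_\C)$) given by the corresponding integral of $T_{\mu_\Psi}\Phi_x(\varphi)$.

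Accordingly there is no genuine obstacle: the one point to watch is that the phase $\mathrm{e}^{-\mathrm{i}xa}$ does not destroy absolute integrability, which is immediate from $|\mathrm{e}^{-\mathrm{i}xa}|=1$. If one additionally wants the closed form of $T_{\mu_\Psi}\delta_a(\langle\cdot,\eta\rangle)(\varphi)$, the computation proceeds as in Theorem~\ref{theorem:Donsker}: write $_m\!\Psi_p(-\,\cdot\,)$ as the Laplace transform of $H^{m,0}_{p,m}$, apply Fubini, and complete the square in the resulting complex Gaussian integral $\int_\R\exp\big(-\tfrac r2\langle\eta,\eta\rangle x^2-(r\langle\eta,\varphi\rangle+\mathrm{i}a)x\big)\,\mathrm{d}x$; this produces an extra factor $\exp\big((r\langle\eta,\varphi\rangle+\mathrm{i}a)^2/(2r\langle\eta,\eta\rangle)\big)$, after which one re-applies the Mellin--Barnes identity \eqref{eq:defGenWrightfuncasFoxHfunc} (Theorem~2.9 in \cite{SaiKil}). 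This last step is the only place where the contour and convergence bookkeeping has to be checked, but the same parameter constraints already in force suffice.
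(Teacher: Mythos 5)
Your proposal is correct and matches the paper's own argument: the key observation in both is that the extra phase $\mathrm{e}^{-\mathrm{i}xa}$ has modulus one, so the absolute-integrability bound of Proposition~\ref{Prop:Integral} carries over unchanged and Theorem~\ref{thm:CharIntegrableMap} applies exactly as in the case $a=0$. The closed-form computation via the Laplace-transform representation of $_m\!\Psi_p$, Fubini, and completing the square in the complex Gaussian integral is likewise the route the paper takes.
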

\begin{proof}
The $T_{\mu_{\Psi}}$-transform of the integrand for $\varphi\in\mathcal{N}_\C$ is given by:
\[
	\frac{1}{2\pi K}\exp(-\mathrm{i}xa) \,_m\!\Psi_p\left( -\frac{1}{2} x^2\langle\eta,\eta\rangle - \frac{1}{2}\langle\varphi,\varphi\rangle - x\langle\varphi,\eta\rangle \right) .
\]
Hence its absolute value coincides with the $T_{\mu_{\Psi}}$-transform 
in the case $a=0$. Now we can proceed as in the case $a=0$:
\begin{align*}
    &\left( T_{\mu_{\Psi}} \delta_a\left(\langle\cdot,\eta\rangle\right)\right)(\varphi)\\
    &= \frac{1}{2 \pi K} \int_\R \e^{-\mathrm{i}xa} \,_m\!\Psi_p\left( -\frac{1}{2} x^2\langle\eta,\eta\rangle - \frac{1}{2}\langle\varphi,\varphi\rangle - x\langle\varphi,\eta\rangle \right) \mathrm{d}x \\
    &\overset{*}{=}\frac{1}{2 \pi K} \int_0^\infty H^{m,0}_{p,m}(r) \exp\left(-\frac{1}{2} r \langle \varphi, \varphi \rangle\right)\int_\R \exp\left(-\frac{1}{2}rx^2 \langle \eta,\eta \rangle-x(r \langle \varphi, \eta \rangle +\mathrm{i}a)\right)\mathrm{d}x\,\mathrm{d}r\\
    &\overset{**}{=} \frac{\exp( \mathrm{i}a \langle \varphi, \eta \rangle / \langle \eta,\eta \rangle)}{K \sqrt{2 \pi \langle \eta,\eta \rangle}} \int_0^\infty H^{m,0}_{p,m}(r)r^{-1/2} \exp\left( -\frac{r}{2}\left( \langle \varphi, \varphi \rangle - \frac{\langle \varphi, \eta \rangle^2}{\langle \eta,\eta \rangle} \right) - \frac{a^2}{2 r \langle \eta, \eta \rangle} \right) \mathrm{d}r.
\end{align*}
where in $*$ we used the integral representation of $_m \Psi_p (\cdot)$ and in $**$ we use the following Gaussian integral with $r \in (0,\infty)$ 
\[ \int_\R \exp\left(-\frac{1}{2}rx^2 \langle \eta,\eta \rangle-x(r \langle \varphi, \eta \rangle +\mathrm{i}a)\right)\mathrm{d}x= \sqrt{\frac{2 \pi}{r \langle \eta, \eta \rangle}} \exp\left( \frac{(r \langle \varphi, \eta \rangle +\mathrm{i}a)^2}{2 r \langle \eta, \eta \rangle} \right).  \]
Thus, we can read this mean using a Gaussian random variable $X$, as follows:
\begin{eqnarray*} 
\mathbb{E}_{\mu_{\Psi}}(\delta_a(\langle \cdot, \eta \rangle))&=&\left( T_{\mu_{\Psi}} \delta_a\left(\langle\cdot,\eta\rangle\right)\right)(0)\\ 
&=&\frac{1}{K \sqrt{2 \pi \langle \eta,\eta \rangle}} \int_0^\infty H^{m,0}_{p,m}(r) r^{-1/2} \exp\left(-\frac{a^2}{2 r \langle \eta, \eta \rangle}\right)\mathrm{d}r\\
&=& \mathbb{E}_{\mu_{R,\eta}}(\delta_a(X)),   
\end{eqnarray*}
where $\mu_{R,\eta}$ is the distribution of the centered Gaussian random variable $X$ with variance $R \langle \eta, \eta \rangle$ and $R \in \mathcal{X}$; see Lemma~\ref{lem:HFiniteDensities}. Furthermore, we use Equation~(2.8.12) in \cite{SaiKil} to represent this mean through the density of $\mu_{\Psi}^1$:

\begin{eqnarray*}
    &&\frac{1}{K \sqrt{2 \pi \langle \eta,\eta \rangle}} \int_0^\infty H^{m,0}_{p,m}(r) r^{-1/2} \exp\left(-\frac{a^2}{2 r \langle \eta, \eta \rangle}\right)\mathrm{d}r\\
    &=&\frac{1}{\sqrt{2 \pi \langle \eta, \eta \rangle}K} H^{m+1,0}_{p,m+1}\left[ \frac{|a|^2}{2 \langle \eta, \eta \rangle}\,\bigg| \genfrac{}{}{0pt}{}{(a_i+\alpha_i/2,\alpha_i)_p}{(0,1),(b_{j-1}+\beta_{j-1}/2,\beta_{j-1})_{2,m+1}}\right],\end{eqnarray*} 
see also Theorem \ref{thm:FiniteDimensionalFoxHGaussianDensity}.
\end{proof}

\section{Conclusions}
We summarize the results and provide a brief overview of the forthcoming studies.

We use gWfs, $_m\Psi_q(\cdot)$, to define a class of measures $\mu_\Psi$ in finite and infinite dimensions. We investigate some properties of this class of measures, e.g., mixed moments, density, analyticity, invariance, etc. We have established the generalized Wright analysis by showing the existence of an Appell system, which allows us to build a chain of tests and generalized function spaces.  We studied Donsker's delta in this non-Gaussian setting as a well-defined element in a certain distribution space. We will postpone the stochastic counterpart of this framework for a future paper. Choosing a specific nuclear triple, we may define a generalized stochastic process that can be realized in different forms using known processes; see \cite{BCM23} and \cite{JahnII} for similar examples. Specifically, it is expected that these processes behave like anomalous diffusions and their fractional Fokker-Planck equations. Moreover, we plan to study the Green function corresponding to the time-fractional heat equation that corresponds to an extension of the Feynman-Kac formula in this non-Gaussian setting. Another application we have in mind is to study the local times and the self-intersection local times corresponding to the associated process using Theorem~\ref{theorem:Donsker} and Proposition~\ref{prop:Donskersdeltaina}.

\appendix
\section{Appendix}
\label{app:Completely monotone FHDAM}
Certainly, the class of F$H$dam represents a wide class of densities. On the other hand, it is natural to ask if the latter have important properties such as infinite divisibility. Here we give a partial answer to this question.\\ 
It is well known that the complete monotonicity and infinite divisibility are strictly related; see Theorem~51.6 in \cite{Sat99}. 
In the following lemma, we highlight why an F$H$dam, with the hypotheses given in the above lemma, is not completely monotone. In fact, such an F$H$dam cannot exist given that $a^* \in (0,1)$ and $\mathcal{L}=\mathcal{L}_{\mathrm{i}\gamma \infty}$.
\begin{lemma}
    Let $m,p \in \N$, $a_i,b_j \in \R$ and $\alpha_i,\beta_j>0$ for $i=1,\dots, p$ and $j=1,\dots,m$. Let $H^{m,0}_{p,m}(x), x>0,$ be a density with $a^*\in (0,1)$, then $H^{m,0}_{p,m}(x), x>0,$ is not completely monotone.
\end{lemma}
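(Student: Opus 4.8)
The plan is to argue by contradiction: complete monotonicity forces a Laplace‐transform representation of $H^{m,0}_{p,m}$, and iterating the Laplace transform produces a Stieltjes transform, which can never agree with the entire function guaranteed by Lemma~\ref{lem:HFiniteDensities}. Write $h:=H^{m,0}_{p,m}(\cdot)$ and suppose, for contradiction, that $h$ is completely monotone on $(0,\infty)$. By the Bernstein--Widder theorem there is a unique positive Borel measure $\mu$ on $[0,\infty)$ with $h(\tau)=\int_{[0,\infty)}\e^{-\tau t}\,\mathrm{d}\mu(t)$ for $\tau>0$. Since $h$ is a density (hence nonnegative, not identically zero, and integrable), Tonelli's theorem gives
\[
\int_0^\infty h(\tau)\,\mathrm{d}\tau=\int_{[0,\infty)}\frac{\mathrm{d}\mu(t)}{t}\in(0,\infty),
\]
so $\mu$ has no atom at $0$, $\mu\neq 0$, and $\int_{(0,\infty)}t^{-1}\,\mathrm{d}\mu(t)<\infty$.

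Next I would compute the Laplace transform of $h$ itself. For $s>0$, Tonelli again yields
\[
(\mathscr{L}h)(s)=\int_0^\infty \e^{-s\tau}h(\tau)\,\mathrm{d}\tau=\int_{(0,\infty)}\frac{\mathrm{d}\mu(t)}{s+t},
\]
i.e.\ $\mathscr{L}h$ is the Stieltjes transform of $\mu$. On the other hand, since $a^*\in(0,1)$, Lemma~\ref{lem:HFiniteDensities} tells us that $\mathscr{L}h$ (which equals ${}_m\!\Psi_p(-\cdot)$ up to the constant $K$) extends to an entire function on $\C$. Thus the Stieltjes transform $g(s):=\int_{(0,\infty)}(s+t)^{-1}\,\mathrm{d}\mu(t)$ is entire.

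The final step shows that this is impossible for a nonzero positive measure supported in $(0,\infty)$. Differentiating under the integral for $s>0$ gives $g^{(n)}(s)=(-1)^n n!\int_{(0,\infty)}(s+t)^{-n-1}\,\mathrm{d}\mu(t)$; letting $s\downarrow 0$ and invoking monotone convergence, the smoothness of the entire function $g$ at $0$ forces $\int_{(0,\infty)}t^{-n-1}\,\mathrm{d}\mu(t)<\infty$ for every $n$ and identifies the Taylor coefficients of $g$ at $0$ as $c_n=(-1)^n\int_{(0,\infty)}t^{-n-1}\,\mathrm{d}\mu(t)$. Choosing $t_0\in\mathrm{supp}\,\mu\subset(0,\infty)$ and, for small $\epsilon>0$, bounding $|c_n|\ge (t_0+\epsilon)^{-n-1}\,\mu\big((t_0-\epsilon,t_0+\epsilon)\big)$ with $\mu\big((t_0-\epsilon,t_0+\epsilon)\big)>0$, one obtains $\limsup_n|c_n|^{1/n}\ge (t_0+\epsilon)^{-1}>0$, so the Taylor series of $g$ at $0$ has finite radius of convergence, contradicting entirety. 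Hence $h=H^{m,0}_{p,m}$ is not completely monotone.

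The step I expect to be the only real obstacle is purely bookkeeping around degenerate cases: one must check the argument survives when $\mu$ is an infinite measure or when $\inf\mathrm{supp}\,\mu=0$ (the bound $\int t^{-1}\,\mathrm{d}\mu<\infty$ already excludes an atom at $0$, and in the remaining bad cases some $\int t^{-n-1}\,\mathrm{d}\mu$ is already $+\infty$, which immediately contradicts smoothness of $g$ at $0$). An alternative, essentially equivalent route avoids Stieltjes transforms: by Landau's theorem the abscissa of convergence of $\mathscr{L}h$ is a singularity of $\mathscr{L}h$, so entirety forces it to be $-\infty$, meaning $h$ decays faster than every exponential; but $h(\tau)=\int\e^{-\tau t}\,\mathrm{d}\mu(t)\ge \e^{-a\tau}\mu\big((0,a]\big)$ with $\mu\big((0,a]\big)>0$ for $a$ large, so $h$ decays at most exponentially --- a contradiction.
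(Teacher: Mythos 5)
Your proof is correct, but it takes a genuinely different route from the paper. The paper also starts from Bernstein's theorem, but then stays entirely inside the Fox-$H$ calculus: it argues that a completely monotone F$H$dam would have to be the Laplace transform of another Fox-$H$ function whose parameters are forced by the Mellin-transform identities, and then shows (via the parameter bookkeeping $a^{*}_{2}<0$, etc.) that the Mellin--Barnes integrand of that candidate function diverges on the contour $\mathcal{L}_{\mathrm{i}\gamma\infty}$, so the candidate cannot exist. You instead use only the Bernstein--Widder representation $h(\tau)=\int e^{-\tau t}\,\mathrm{d}\mu(t)$, the normalization of the density (which kills an atom at $0$ and gives $\int t^{-1}\,\mathrm{d}\mu<\infty$), and the entirety of $\mathscr{L}h$ from Lemma~\ref{lem:HFiniteDensities} for $a^{*}\in(0,1)$; the contradiction comes from the elementary fact that a nonzero Stieltjes transform of a positive measure on $(0,\infty)$ has a Taylor series at $0$ with finite radius of convergence (your monotone-convergence identification of the coefficients and the support lower bound are both sound), or alternatively from Landau's theorem on the abscissa of convergence. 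What your approach buys is generality and robustness: it proves that \emph{no} probability density on $(0,\infty)$ with entire Laplace transform can be completely monotone, and it avoids the paper's delicate step of asserting that the Bernstein measure must itself be a Fox-$H$ density. What the paper's approach buys is that it identifies concretely which Fox-$H$ function would have to serve as the Bernstein density and exactly which existence condition it violates, which fits the asymptotic toolkit used elsewhere in the paper. The only point to flag is cosmetic: the lemma as stated assumes only that $H^{m,0}_{p,m}$ is a density with $a^{*}\in(0,1)$, whereas quoting Lemma~\ref{lem:HFiniteDensities} for entirety technically imports the full Assumption~\ref{ass:AllHdensity}; since the appendix explicitly places itself under those hypotheses (and entirety of the Laplace transform in fact follows from $a^{*}>0$ via the decay of the density), this does not affect correctness.
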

\begin{proof}
    In the following we prove the result using an argument to absurdity: we suppose to have a F$H$dam density with $\mathcal{L}=\mathcal{L}_{\mathrm{i} \gamma \infty}$ and parameter $a^*$, denoted by $a^*_{H^{m,0}_{p,m}}$, is in $(0,1)$ and we also assume that the same F$H$dam is completely monotone, then we found that the integrand function in Equation~\eqref{def:KernelFunctionFoxH} of the Fox-$H$ density, whose Laplace transform coincides with the chosen F$H$dam, diverge on $\mathcal{L}=\mathcal{L}_{\mathrm{i} \gamma \infty}$.\\
    Let us assume that $H^{m,0}_{p,m}(x),x>0,$ is completely monotone, then by Bernstein theorem and Theorem 2.3 in \cite{SaiKil} we have that it is the Laplace transform of a Fox-$H$ density. If one of the couple $(b_{j
    _0},\beta_{j_0})=(0,1)$, we have, with $x>0$,
    
    \[H^{m,0}_{p,m} \left[ x \,\Bigg|\genfrac{}{}{0pt}{}{(a_i,\alpha_i)_{1,p}}{(0,1),(b_j,\beta_j)_{\{1,m\} \backslash \{j_0\}}}\right]=\left( \mathscr{L}H^{0,m-1}_{m-1,p} \left[ \cdot \,\Bigg|\genfrac{}{}{0pt}{}{(1-(b_j +\beta_j),\beta_j)_{1,m-1}}{(1-(a_i+\alpha_i),\alpha_i)_{1,p}} \right]\right)(x),\]
    if there is no couple equal to $(0,1)$ we could apply Equation (2.1.2) in \cite{SaiKil} such that
    \begin{eqnarray*}
    H^{m,0}_{p,m} \left[ x \,\Bigg|\genfrac{}{}{0pt}{}{(a_i,\alpha_i)_{1,p}}{(b_j,\beta_j)_{1,m}}\right]&=&H^{m+1,0}_{p+1,m+1} \left[ x \,\Bigg|\genfrac{}{}{0pt}{}{(a_i,\alpha_i)_{1,p},(0,1)}{(0,1),(b_j,\beta_j)_{\{1,m\}}}\right] \\
    &=&\left( \mathscr{L}H^{0,m}_{m,p+1} \left[ \cdot \,\Bigg|\genfrac{}{}{0pt}{}{(1-(b_j+\beta_j),\beta_j)_{1,m}}{(1-(a_i+\alpha_i),\alpha_i)_{1,p}(0,1)} \right]\right)(x),
    \end{eqnarray*}
    where $(\mathscr{L}\varrho(\cdot))(x)$ denotes the Laplace transform of $\varrho(\cdot)$.\\
    For the first case we proceed as follow. If we denote by $a^*_{H^{0,m-1}_{m-1,p}}, \Delta_{H^{0,m-1}_{m-1,p}}, a^*_{1,H^{0,m-1}_{m-1,p}}$ and $a^*_{2,H^{0,m-1}_{m-1,p}}$ the parameters, referred to $H^{0,m-1}_{m-1,p}$, defined, respectively, in Equations~(1.1.7), (1.1.8), (1.1.11) and (1.1.12) in \cite{SaiKil}, we have the following chain $0<1-a^*_{H^{0,m-1}_{m,p-1}}=1-\Delta_{H^{0,m}_{m,p}}=\Delta_{H^{0,m-1}_{m-1,p}}=-a^*_{H^{0,m-1}_{m-1,p}}=-a^*_{2,H^{0,m-1}_{m-1,p}}$ and $a^*_{1,H^{0,m-1}_{m-1,p}}=0$. Hence, we have that, for any $x>0$, the integrand function $|\mathcal{H}^{m,0}_{p,m}(s)x^{s}|, s=\gamma \pm \mathrm{i}R \in \mathcal{L}_{\mathrm{i}\gamma \infty},$ diverges as $R \to \infty$. In fact, the leading term of its asymptotic behaviour is $\exp(\pi a^*_{2,H^{0,m-1}_{m-1,p}}R/2)$; see Equation~(3) in \cite{BCDS23}. Then, the Fox-$H$ function $H^{0,m-1}_{m-1,p}\left[ \cdot \,\Big|\genfrac{}{}{0pt}{}{(1-(b_j +\beta_j),\beta_j)_{1,m-1}}{(1-(a_i+\alpha_i),\alpha_i)_{1,p}} \right](x)$ cannot exist for any $x>0$.\\
    The same argument holds to prove that $H^{0,m}_{m,p+1} \left[ \cdot \,\Big|\genfrac{}{}{0pt}{}{(1-(b_j+\beta_j),\beta_j)_{1,m}}{(1-(a_i+\alpha_i),\alpha_i)_{1,p}(0,1)} \right]$ cannot exist for any $x>0$.
\end{proof}
\bibliographystyle{plain}
\bibliography{Hanalysis}

\end{document}